\documentclass[a4paper]{amsart}

\usepackage[margin=3cm,top=2.5cm,bottom=2cm]{geometry}

\usepackage{amsmath, amssymb, amsthm, mathrsfs}
\usepackage{graphicx,enumitem,microtype} 
\usepackage[dvipsnames]{xcolor}

\usepackage[colorlinks=true,linkcolor=red!70!black,urlcolor=MidnightBlue,citecolor=MidnightBlue]{hyperref}

\usepackage[utf8]{inputenc} 
\usepackage[T1]{fontenc}
\usepackage{lmodern}

%\usepackage[color]{showkeys}

%%%%%%%%%%%%%%

\newcommand\N{{\mathbb N}}
\newcommand\R{{\mathbb R}}

\def\AA{{\mathcal A}}
\def\BB{{\mathcal B}}

\def\DD{{\mathcal D}}

\def\HH{{\mathcal H}}

\def\OO{{\mathcal O}}

\def\TT{{\mathcal T}}
\def\UU{{\mathcal U}}
\def\VV{{\mathcal V}}

\def\VV{{\mathcal V}}

\def\CCC{{\mathscr C}}
\def\DDD{{\mathscr D}}

\def\LLL{{\mathscr L}}
\def\MMM{{\mathscr M}}

\def\RRR{{\mathscr R}}
\def\SSS{{\mathscr S}}

\def\eps{{\varepsilon}}

\newcommand{\la}{\langle}
\newcommand{\ra}{\rangle}

\def\Lloc{L_{\mathrm{loc}}} 
\newcommand{\Nt}{|\hskip-0.04cm|\hskip-0.04cm|}

\DeclareMathOperator{\Div}{div}

\newtheorem{theo}{Theorem}[section]
\newtheorem{prop}[theo]{Proposition}
\newtheorem{lem}[theo]{Lemma}

\newtheorem*{thm*}{Theorem}
\theoremstyle{remark}
\newtheorem{rem}[theo]{Remark}

\newtheorem*{ex*}{Example}
\theoremstyle{definition}

\numberwithin{equation}{section}
\setcounter{tocdepth}{1}

\newcommand{\be}{\begin{equation}}
\newcommand{\ee}{\end{equation}}
\newcommand{\ba}{\begin{aligned}}
\newcommand{\ea}{\end{aligned}}
\newcommand{\beqn}{\begin{equation}}
\newcommand{\eeqn}{\end{equation}}
\newcommand{\bear}{\begin{eqnarray}}
\newcommand{\eear}{\end{eqnarray}}
\newcommand{\bean}{\begin{eqnarray*}}
\newcommand{\eean}{\end{eqnarray*}}
\newcommand{\bal}{\begin{aligned}}
\newcommand{\eal}{\end{aligned}}

%%%%%%%%%%%%%%%%%%%%%%%%%%%%%%%%%%%%%%%

\title[Kinetic Fokker-Planck equation in a domain]{The Kinetic Fokker-Planck equation in a domain:  \\  Ultracontractivity, hypocoercivity \\ and long-time asymptotic  behavior}
\author[K. Carrapatoso]{Kleber Carrapatoso}
\author[S. Mischler]{St\'ephane Mischler}

\address[K.~Carrapatoso]{Centre de Math\'ematiques Laurent Schwartz, \'Ecole polytechnique, Institut Polytechnique de Paris, 91128 Palaiseau cedex, France}
\email{kleber.carrapatoso@polytechnique.edu}

\address[S.~Mischler]{Centre de Recherche en Math\'ematiques de
  la D\'ecision (CEREMADE),
  Universit\'es PSL \& Paris-Dauphine, Place de Lattre de
  Tassigny, 75775 Paris 16, France \& Institut Universitaire de France (IUF)}
\email{mischler@ceremade.dauphine.fr}

\date{\today}

\subjclass[2020]{35Q84, 35B40, 47D06}
%%
%% 35Q84   Fokker-Planck equations For fluid mechanics, see 76X05, 76W05; for statistical mechanics, see 82C31
%% 35B40  	Asymptotic behavior of solutions
%% 47D06  	One-parameter semigroups and linear evolution equations
%%

\keywords{Kinetic Fokker-Planck equation, ultracontractivity, hypocoercivity, long-time asymptotic behavior}

%%%%%%%%%%%%%%%%%%%%%%%%%%%%

\begin{document}

\begin{abstract} 
We consider the Kinetic Fokker-Planck (FKP) equation in a domain with Maxwell reflection condition on  the boundary. We establish the ultracontractivity of the associated  semigroup and the hypocoercivity of the   associated  operator. We deduce the convergence with constructive rate of the solution to the KFP equation towards the stationary state with same mass as the initial datum.
\end{abstract}

\maketitle

\tableofcontents
 
%%%%%%%%%%%%%%%%%%%%%%%% 
\section{Introduction}

In this paper, we consider the Kinetic Fokker-Planck (KFP) equation, also called the degenerated Kolmogorov or the ultraparabolic equation, 
\beqn\label{eq:Kolmogorov} 
\partial_t f + v \cdot \nabla_x f - \Delta_v f - \Div_v (vf) = 0 \quad \hbox{in}\quad \UU
\eeqn
on the function $f := f_t = f(t,\cdot) = f(t,x,v)$, with $(t,x,v) \in \UU := (0,T) \times \Omega \times \R^d$, $T \in (0,+\infty]$, $\Omega \subset \R^d$ a suitably smooth domain, $d \ge 3$, 
complemented with the Maxwell reflection condition on  the boundary 
\beqn\label{eq:KolmoBdyCond}
\gamma_{-} f  = \RRR \gamma_{+}  f  =(1-\iota) \SSS  \gamma_{+}  f + \iota \DDD  \gamma_{+}  f     \quad\hbox{on}\quad \Gamma_-,
\eeqn
and associated to an initial condition 
\beqn\label{eq:initialDatum} 
f(0,x,v) = f_0(x,v) \quad \hbox{in}\quad \OO := \Omega \times \R^d.
\eeqn
Here $\Gamma_-$ denotes the  incoming part of the boundary, $\SSS$ denotes the specular reflection operator, $\DDD$ denotes the diffusive reflection operator  (see precise definitions below), and 
$\iota : \partial\Omega \to [0,1]$ denotes a (possibly space dependent) accommodation coefficient. More precisely, we assume that $\Omega := \{ x \in \R^d; \,   \delta(x) > 0 \}$  
for a $W^{2,\infty}(\R^d)$ function $\delta$ such that $|\delta (x)| := \mathrm{dist}(x,\partial\Omega)$ on a neighborhood of  the boundary set $\partial\Omega$ and thus $n_x = n(x) :=  - \nabla \delta(x)$
coincides with the  unit normal outward vector field on $\partial\Omega$. We next define $\Sigma_\pm^x := \{ {v} \in \R^d; \pm \, {v} \cdot n_x > 0 \}$ the sets of outgoing ($\Sigma_+^x$) and incoming ($\Sigma_-^x$) velocities at point $x \in \partial\Omega$, then the sets
$$
 \Sigma_\pm := % \{ (x,{v}) \in \Sigma; \pm n_x \cdot {v} > 0 \} = 
 \{ (x,{v}); \, x \in \partial\Omega, \, {v} \in \Sigma^x_\pm \}, 
 \quad
 \Gamma_\pm := (0,T) \times \Sigma_\pm,
$$
and finally   the outgoing and incoming trace functions $\gamma_\pm f := {\bf 1}_{\Gamma_\pm}  \gamma f$.
The specular reflection operator $\SSS$ is defined by 
\beqn
\label{eq:FPK-def_Gamma}
(\SSS g) (x,v)  :=  g (x , \VV_x v), \quad \VV_x v := v - 2 n_x (n_x \cdot v),
\eeqn
and the diffusive operator $\DDD$ is defined by 
\beqn
\label{eq:FPK-def_D}
(\DDD g) (x,v)  := \MMM(v) \widetilde g (x), \quad \widetilde g (x) := \int_{\Sigma^x_+} g(x,w) \, (n_x \cdot w) \, d w,
%\DDD (g (x,\cdot))(v) = \MMM(v) \widetilde g (x), \quad \widetilde g (x) = \int_{\Sigma^x_+} g(x,w) \, n(x) \cdot w \, d w,
\eeqn
where $\MMM$ stands for the (conveniently normalized) Maxwellian function
\beqn
\label{eq:FPK-def_M}
\MMM(v) := (2\pi)^{-(d-1)/2} \exp(-|v|^2/2),
\eeqn
which is positive on $\R^d$ and verifies $\widetilde \MMM=1$. We assume that  the accommodation coefficient satisfies $\iota \in W^{1,\infty}(\partial\Omega)$. 
For further references, we also define  the (differently normalized) Maxwellian function
\beqn\label{eq:FPK-def_mu} 
f_\infty (x,v) = \frac{1}{|\Omega|}\, \mu(v) := \frac{1 }{ |\Omega| (2\pi)^{d/2}} \exp(-|v|^2/2),
\eeqn
which is positive on $\OO$ and verifies $\| f_\infty \|_{L^1(\OO)} = 1$.  The elementary  (and well known at least at a formal level) properties of the Kinetic Fokker-Planck equation 
are that it is mass conservative, namely
\beqn
\label{eq:intro_mass_cons}
\langle\! \langle f_t \rangle\!\rangle =  \langle\! \langle f_0 \rangle\!\rangle , \quad \forall \, t \ge 0, \quad\hbox{with}\quad \langle\! \langle  h  \rangle\!\rangle := \int_\OO h dxdv, 
\eeqn
it is positivity  preserving, namely $f_t \ge 0$ if $f_0 \ge 0$, and $f_\infty$ is a stationary solution.

\medskip
%\subsection{Main results}

The aim of this paper is twofold:

\smallskip
(1) On the one hand, we prove the ultracontractivity of the semigroup associated to the evolution problem
\eqref{eq:Kolmogorov}--\eqref{eq:KolmoBdyCond}--\eqref{eq:initialDatum} by establishing some immediate gain of Lebesgue integrability and even  immediate uniform bound estimate. 

\smallskip
(2) On the other hand, we prove the convergence of the solution to the associated stationary state, namely $f_t \to  \langle\! \langle f_0 \rangle\!\rangle f_\infty$ as $t \to \infty$, with 
constructive exponential rate in many weighted Lebesgue spaces. 

\smallskip
These results extend some previous similar results known for other geometries or less general reflection conditions. 
 For both problems, we adapt or modify some recent   or forthcoming results established  in \cite{MR4581432,CM-Landau**} for the Landau equation for the same geometry as considered here. 
In that sense, the techniques are not really new and the present contribution may rather be seen as a pedagogical illustration on one of the simplest models of the kinetic theory of some tools we develop in other papers for more elaborated kinetic models. We also refer to   \cite{CGMM**,FM**,CM-Boltz**} for further developments of these techniques for related 
kinetic equations set in a domain with reflection conditions on the boundary.

\smallskip 
For a weight function $\omega : \R^d \to (0,\infty)$ and a exponent $p \in [1,\infty]$, we define the associated weighted Lebesgue space  
$$
L^p_\omega := \{  f \in \Lloc^1 ({\R}^d); \,\, \|  f \|_{L^p_\omega}  := \|  f \omega \|_{L^p} < \infty \}. 
$$

\medskip
Our first main result is an ultracontractivity property. 

\begin{theo}\label{th:DGNML1Linfty}  
There exist two weight functions $\omega = \exp (\zeta |v|^2)$, $\omega' = \exp (\zeta' |v|^2)$, with $0 < \zeta' < \zeta < 1/2$, and some constants $\nu > 0$, $C_1 \ge 1$, $C_2 \ge 0$ such that for any exponents $p,q \in [1,\infty]$, $q>p$, 
and any initial datum $f_0 \in L^p_\omega(\OO)$, 
 the   associated solution $f$ to the Kinetic Fokker-Planck (KFP) equation \eqref{eq:Kolmogorov}--\eqref{eq:KolmoBdyCond}--\eqref{eq:initialDatum} satisfies    
\beqn\label{eq:th1-estim}
\| f (t) \|_{L^q_{\omega'}} \le C_1 \frac{e^{C_2 t} }{ t^{\nu(1/p-1/q)}}   \| f_0  \|_{L^p_\omega}, \quad \forall \, t > 0.
\eeqn

\end{theo}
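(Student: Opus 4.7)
The strategy follows the classical two-level approach to ultracontractivity: first establish an a priori weighted $L^p$ bound with exponential-in-time growth, then prove a short-time gain $L^p \to L^\infty$ of De Giorgi--Nash--Moser type producing the singular factor $t^{-\nu/p}$, and finally conclude by Riesz--Thorin interpolation on $p < q$.

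\textbf{Step 1 (Weighted $L^p$ energy estimates).} The first step is to identify a class $\mathfrak W_1$ of admissible weights, for example polynomial weights $\langle v\rangle^k$ with $k$ large and sub-Gaussian weights $e^{\alpha\langle v\rangle^s}$ with $0<s<2$ and $\alpha>0$ small, for which one can control $\|f(t)\|_{L^p_\omega}$. Multiplying \eqref{eq:Kolmogorov} by $p f|f|^{p-2}\omega^p$ and integrating on $\OO$, the Fokker-Planck part yields a non-negative velocity dissipation of $f^{p/2}\omega^{p/2}$ together with a zeroth-order remainder of size $\lesssim \|f\|_{L^p_\omega}^p$ coming from the commutator of $\nabla_v$ with $\omega$ and from the friction term $\Div_v(v\cdot)$. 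The transport $v\cdot \nabla_x f$ integrates to a boundary flux $\int_{\Sigma}(v\cdot n_x)|f|^p\omega^p$; using \eqref{eq:KolmoBdyCond}, the specular piece vanishes under the change of variable $v\mapsto \VV_x v$, while the diffusive piece is controlled by H\"older combined with $\widetilde\MMM=1$. Gronwall then yields $\|f(t)\|_{L^p_\omega}\le e^{Ct}\|f_0\|_{L^p_\omega}$, the cases $p=1$ and $p=\infty$ following respectively from the positivity/conservation structure and by duality.

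\textbf{Step 2 (Short-time gain of integrability).} To upgrade Step 1 to the singular estimate, I would run a kinetic De Giorgi--Nash--Moser iteration, in the spirit of Golse--Imbert--Mouhot--Vasseur and of the authors' recent Landau works \cite{MR4581432,CM-Landau**}. Applied to the truncations $(f-k_n)_+$ with a dyadic sequence of levels $k_n\uparrow \infty$ and weights $\omega^{\theta_n}$ with $\theta_n\downarrow \theta$, the scheme combines: (i) a local weighted $L^2$ energy inequality for the truncated equation, as in Step 1; (ii) a hypoelliptic gain of integrability, which transfers $L^2$-control of $\nabla_v (f-k_n)_+$ into an $L^r$-bound on $(f-k_n)_+$ for some $r>2$, via a velocity averaging argument tailored to the Kolmogorov operator; and (iii) a non-linear recursive inequality on the excess mass that forces $(f-k_\infty)_+\equiv 0$. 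Near $\partial\Omega$, the specular part of $\RRR$ is handled by reflection/unfolding across the boundary, while the diffusive part, being smooth in $v$ thanks to the factor $\MMM(v)$, is treated as a source bounded a posteriori using a trace estimate on $\gamma_+ f$. Combined with the semigroup property (splitting $[0,t]$ into $[0,t/2]$ and $[t/2,t]$), the iteration delivers the end-point bound $\|f(t)\|_{L^\infty_{\omega^\theta}}\le C_1 t^{-\nu/p} e^{C_2 t}\|f_0\|_{L^p_\omega}$ for a suitable $\nu>0$.

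\textbf{Conclusion and main obstacle.} Interpolating this end-point $L^p\to L^\infty$ estimate with the $L^p\to L^p$ a priori bound of Step 1 by Riesz--Thorin yields \eqref{eq:th1-estim} on the full range $1\le p<q\le \infty$, with time exponent $\nu(1/p-1/q)$. The main technical obstacle is the De Giorgi iteration up to the boundary when the accommodation coefficient $\iota$ is neither identically $0$ nor identically $1$: the specular unfolding alone does not produce a problem free of boundary, so the mixed Maxwell condition forces a perturbative treatment of the diffusive term and a careful check that the Moser iteration still closes uniformly near $\partial\Omega$. This step is also where the loss of weight $\omega \to \omega^\theta$ with $\theta<1$ enters in an essentially unavoidable way, as each Moser iteration consumes a small fraction of the weight exponent.
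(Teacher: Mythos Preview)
Your overall architecture (weighted $L^p$ bounds, then a short-time gain, then interpolation) matches the paper's, but the mechanism you propose for Step~2 is genuinely different from what the paper does, and your plan has a real gap precisely at the obstacle you yourself flag.

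\medskip
\textbf{What the paper actually does.} The paper does \emph{not} run a De Giorgi iteration on level sets $(f-k_n)_+$, does not use velocity averaging, and does not use any specular unfolding to remove the boundary. Instead it mixes Moser's gain (working with $f^q$) with Nash's duality argument, as follows.
\begin{enumerate}
\item[(a)] It first takes an exponent $q\in(0,1)$, not $q>1$. Since $s\mapsto s^q$ is \emph{concave}, the Darroz\`es--Guiraud/Jensen inequality for the diffusive boundary term flips sign, and with a carefully twisted weight $\widetilde m^q = m_A^q\bigl(1 - \tfrac14 n_x\cdot\widetilde v + \tfrac{1}{4D^{1/2}}\delta^{1/2} n_x\cdot\widetilde v\bigr)$ one obtains a boundary-\emph{penalizing} estimate: $\int f^q m^q \delta^{-\beta}\langle v\rangle^{-2}\varphi^q$ is controlled by $\|f\|_{L^1}^q$.
\item[(b)] Separately, an \emph{interior} $L^1\to L^p$ estimate ($p>1$) is obtained by localizing with $\chi\in\DD(\Omega)$ and using the explicit Kolmogorov kernel on the whole space; this produces a weight $\delta^{\alpha/p}$ that \emph{degenerates} at the boundary.
\item[(c)] Interpolating (a) and (b) with $\theta$ chosen so that the powers of $\delta$ cancel gives an $L^1\to L^r$ estimate, $r>1$, valid uniformly up to the boundary.
\item[(d)] The same three steps are repeated for the \emph{dual} backward problem, yielding $S_\LL^*:L^1_m\to L^r_{m'}$, which by duality gives $S_\LL:L^{r'}_{\omega'}\to L^\infty_\omega$.
\item[(e)] Interpolating the forward $L^1\to L^r$ with the dualized $L^{r'}\to L^\infty$ gives the full range.
\end{enumerate}

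\medskip
\textbf{Where your plan has a gap.} Your Step~2 relies on specular unfolding near the boundary plus a ``perturbative treatment of the diffusive term''. But unfolding only produces a boundary-free problem when $\iota\equiv 0$; for genuinely mixed $\iota\in(0,1)$ there is no extension that eliminates the boundary, and you would need a quantitative trace control of $\gamma_+ f$ at \emph{every} level of the De Giorgi iteration, uniformly as the levels $k_n\to\infty$. You do not indicate how to obtain this, and the literature you cite (\cite{MR4414615}, \cite{MR4076068}) only covers the pure specular case. The paper's $q<1$ trick is exactly what sidesteps this: concavity makes the diffusive boundary contribution have a \emph{favorable} sign, so no trace control is needed at all. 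A secondary issue is your Step~1: for general $\omega$ and general $p$ the diffusive boundary term is \emph{not} directly controlled by H\"older plus $\widetilde\MMM=1$; the paper needs a twisted weight $\widetilde\omega=\omega_A+\tfrac12 n_x\cdot\widetilde v$ to close the $L^1$ estimate, and a duality argument (via $S_\LL^*$ on $L^1_m$) to reach $L^\infty$, since the direct $L^p$ estimate blows up as $p\to\infty$.

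\medskip
In short: your high-level outline is reasonable, but the boundary step as written is not a proof. The paper's route---sub-unity exponents to flip the boundary sign, interior Kolmogorov kernel, interpolation to kill the $\delta$-weights, then Nash duality---is a substantially different and more robust mechanism for handling the mixed Maxwell condition.
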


We refer to section~\ref{subsec:conclusion} for a possible definition of the set $\mathfrak W_1$ of weight functions $\omega$ for which the above  ultracontractivity property holds true.
In the whole space $\Omega=\R^d$, such a kind of  ultracontractivity property is a direct consequence of the representation of the solution thanks to the Kolmogorov kernel, see \cite{MR1503147}, as well as \cite{MR0222474,MR1949176} for related regularity estimates.
 Some local uniform estimate of a similar kind for a larger class of KFP equations in the whole space has been established \cite{MR2068847,MR2368979,MR3981630} by using Moser iterative scheme introduced in \cite{MR0170091,MR159139}, from what some Gaussian upper bound on the fundamental solution may be derived,  see \cite{MR2000352,MR3717350,francesca2021fundamental}. 
 In  \cite{MR3923847}, the same local uniform estimates (as well as the Harnack inequality and the Holder regularity) has been shown for a still larger class of KFP equations in the whole space by using  De Giorgi iterative scheme as introduced in  \cite{MR0093649}.  
 We also refer  to  \cite{anceschi2019survey} for a general survey about these issues and to \cite{MR2530175,MR2773175,MR4181953,MR4491355,MR4386001} for additional results on the KFP equations in the whole space.
In \cite{Herau2007}, a gain of regularity estimate has been established by adapting Nash argument introduced in \cite{MR0100158}, see also \cite{MR2562709,MR3779780,MR3488535} for further developments of the same technique. 

 In \cite{MR4414615}, an  ultracontractivity result similar to  ours is obtained for the KFP equation in a domain with specular reflection at the boundary by an extension argument to the whole space (used first in \cite{MR4076068}) and then reduces the problem to the application of \cite{MR2068847,MR3923847}. 
 In \cite{zhu2022regularity} some kind of regularity up to the boundary is proved for the KFP equation with inflow or specular reflection at the boundary using the extension argument of  \cite{MR4414615} and some appropriate change of coordinates. See also \cite{MR4527757}, where some similar results are established for  the KFP equation with zero inflow. We finally refer to  \cite{CGMM**} where the same kind of ultracontractivity result is established with $\omega' = \omega$ for a large class of weight functions $\omega$.

\medskip
We are next concerned with the longtime behavior estimate. We start by establishing a hypocoercivity result. For that purpose, we define the operator 
\beqn\label{def:LLL}
\LLL f := -  v \cdot \nabla_x f + \Delta_v f + \Div_v (vf) 
\eeqn
and we denote by $ \mathrm{Dom}(\LLL)$ its domain in   the Hilbert space $\HH := L^2(\mu^{-1}dxdv)$ endowed with the norm $\| f \|_{\HH} = \| \mu^{-1/2} f \|_{L^2}$. 

\begin{theo}\label{theo:hypo}
There exists a scalar product $(\!( \cdot , \cdot )\!)$ on the space $\HH$ so that the associated norm $\Nt \cdot \Nt$ is equivalent to the usual norm $\| \cdot \|_{\HH}$, and for which the linear operator $\LLL$ satisfies the following coercivity estimate: there is a positive constant $\lambda \in (0,1)$ such that 
\beqn\label{eq:hypocoercivityL2}
( \! ( - \LLL f , f )\! ) \ge \lambda  \Nt f \Nt^2
\eeqn
for any $f \in \mathrm{Dom}(\LLL)$ satisfying the boundary condition \eqref{eq:KolmoBdyCond} and the mass condition $ \la \! \la f \ra\!\ra = 0$.
\end{theo}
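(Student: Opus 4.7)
The proof adapts the $L^2$-hypocoercivity strategy of Dolbeault--Mouhot--Schmeiser to the bounded domain with Maxwell reflection, along the lines of the authors' prior work \cite{MR4581432}. I decompose $\LLL = -T + S$ with $T = v \cdot \nabla_x$ and $S = \Delta_v + \Div_v(v \cdot)$, and introduce the macro--micro splitting $f = \Pi f + f^\perp$, where $\Pi f(x,v) := \pi_f(x) \mu(v)$ with $\pi_f(x) := \int_{\R^d} f(x,v) \dv$ is the orthogonal projection onto $\ker S$ in $\HH$. A direct computation, writing $h := f/\mu$ and integrating by parts in $v$ for the Ornstein--Uhlenbeck part and in $x$ for the transport part, yields the standard energy identity
\[
(-\LLL f, f)_\HH \;=\; \int_\OO \mu \, |\nabla_v(f/\mu)|^2 \, \dx \, \dv \;+\; \tfrac{1}{2} \int_{\partial\Omega} \!\! \int_{\R^d} (n_x \cdot v) \, f^2 \, \mu^{-1} \, \dv \, \d\sigma.
\]
The Gaussian Poincar\'e inequality applied to $f/\mu$ in velocity bounds the first term from below by $\|f^\perp\|_\HH^2$, while the boundary integral is nonnegative thanks to the dissipative nature of \eqref{eq:KolmoBdyCond}, and in fact controls a quantitative Maxwell gap $\gamma_+ f - \RRR \gamma_+ f$ in a suitable trace norm when $\iota > 0$.

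At this stage only $f^\perp$ is controlled. To recover dissipation on $\Pi f$, observe that the constraint $\la\!\la f \ra\!\ra = 0$ is preserved by $\LLL$, since the Maxwell condition implies the no-flux identity $u_f \cdot n_x = 0$ on $\partial\Omega$ (with $u_g := \int v \, g \dv$), so $\int_\Omega \pi_f \dx = 0$. I then define $\phi_f$ as the unique mean-zero solution of the Neumann problem
\[
-\Delta_x \phi_f = \pi_f \; \text{ in } \Omega, \qquad n_x \cdot \nabla_x \phi_f = 0 \; \text{ on } \partial\Omega,
\]
and modify the scalar product by setting, for a small $\eps > 0$ to be fixed,
\[
(\!( f, g )\!) \;:=\; (f,g)_\HH \;+\; \eps \int_\Omega \nabla_x \phi_f \cdot u_g \, \dx \;+\; \eps \int_\Omega u_f \cdot \nabla_x \phi_g \, \dx.
\]
Elliptic regularity for the Neumann Laplacian ($\|\nabla_x \phi_f\|_{L^2} \leq C \|\pi_f\|_{L^2}$) and the Cauchy--Schwarz estimate $\|u_g\|_{L^2(\Omega)} \leq \sqrt{d} \, \|g^\perp\|_\HH$ (using $u_{\Pi g} = 0$) imply that $(\!(\cdot,\cdot)\!)$ is a scalar product equivalent to $(\cdot,\cdot)_\HH$ on the mass-zero subspace.

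The core step is the evaluation of $(\!(-\LLL f, f)\!)$. A direct calculation using $u_{Sf} = -u_f$ and $u_{Tf} = \nabla_x \pi_f + \Div_x \!\int_{\R^d} v \otimes v \, f^\perp \dv$ (the $v\otimes v$ contribution of $\Pi f$ being simply $\pi_f I$, which is $\nabla_x \pi_f$ after divergence) shows that the dominant contribution of the cross term reduces to the key identity
\[
\int_\Omega \nabla_x \phi_f \cdot \nabla_x \pi_f \, \dx \;=\; \int_\Omega (-\Delta_x \phi_f) \, \pi_f \, \dx \;=\; \|\pi_f\|_{L^2(\Omega)}^2,
\]
which, via the modification, furnishes the missing coercivity on $\Pi f$. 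The remaining contributions --- the stress-tensor term involving $\int v \otimes v \, f^\perp \dv$, the $u_{f^\perp}$-term, and the symmetric pairing through $\phi_{-\LLL f}$ (itself governed by a Neumann Poisson problem with source $\Div_x u_{f^\perp}$ controllable by $\|f^\perp\|_\HH$) --- are bounded via Cauchy--Schwarz by $C \eps \|f^\perp\|_\HH^2$ plus a small multiple of $\|\Pi f\|_\HH^2$, and are thus absorbed for $\eps$ sufficiently small.

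The main technical obstacle lies in handling the boundary terms produced by the integrations by parts in $x$ that appear in the cross-term computation. Because $\partial_n \phi_f = 0$ by construction, contributions along the normal direction vanish; however, tangential pairings between $\nabla_x \phi_f$ and traces of $v \otimes v \, \gamma f$ on $\partial\Omega$ remain, and must be absorbed into the boundary dissipation already present in the basic energy identity. The identity $u_f \cdot n_x = 0$ eliminates the most singular contribution, and the quantitative boundary dissipation of the Maxwell condition (requiring $\iota > 0$ on a suitable subset of $\partial\Omega$, or additional structural arguments in the purely specular case) closes the estimate. Combining all the ingredients and fixing $\eps$ appropriately yields $(\!(-\LLL f, f)\!) \geq \lambda \Nt f \Nt^2$, which is the desired coercivity.
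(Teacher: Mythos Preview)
Your proposal is correct and follows essentially the same route as the paper: the same DMS-type modified scalar product built from the Neumann--Poisson auxiliary function $\phi_f = (-\Delta_x)^{-1}\varrho_f$, the same macro--micro splitting, and the same mechanism whereby $\partial_n\phi_f=0$ kills the specular-reflection boundary contributions (this is precisely how the paper obtains $B_2=B_3=0$ in its Lemma~\ref{lem:mass1}). Your hedging about the purely specular case is unnecessary: the paper's boundary decomposition (Lemma~\ref{lem:boundary}) shows that the one surviving boundary term $B_1$ automatically carries a factor $\iota$, so it vanishes when $\iota\equiv 0$ and is otherwise absorbed by the $\sqrt{\iota(2-\iota)}$-weighted boundary dissipation from the basic energy identity --- no separate structural argument is needed.
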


The result and the proof is a mere adaptation and simplification of the same hypocoercivity estimate established in \cite{MR4581432}. This last one is inspired, generalizes and simplifies some previous results established in \cite{MR2679358,MR3562318}, see also \cite{MR1787105,MR1969727,MR2034753,MR2130405,Herau2006,MR2562709,MR3324910} and the references therein for more material about the hypocoercivity theory.

\smallskip

We deduce from the two previous results the announced exponential convergence result.  
 
\begin{theo}\label{th:LimitInfty} There exists a class of weight functions $\mathfrak W_2$ such that for any weight function $\omega \in \mathfrak W_2$, any exponent $p \in [1,\infty]$ and any initial datum $f_0 \in L^p_\omega(\OO)$, the  associated solution $f$ to the KFP equation \eqref{eq:Kolmogorov}--\eqref{eq:KolmoBdyCond}--\eqref{eq:initialDatum} satisfies    
\begin{equation}\label{eq:th:LimitInfty}
\| f (t) - \langle\!\langle f_0 \rangle\!\rangle f_\infty  \|_{L^p_\omega}   \le C e^{-\lambda t}   \|  f_0 - \langle\!\langle f_0 \rangle\!\rangle f_\infty   \|_{L^p_\omega}, \quad \forall \, t \ge 0,
\end{equation}
for the same constant $\lambda \in (0,1)$ as in Theorem~\ref{theo:hypo} and for some constant $C = C(\omega)$. 
\end{theo}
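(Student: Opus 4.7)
The plan is to combine the ultracontractivity of Theorem~\ref{th:DGNML1Linfty} with the hypocoercive decay of Theorem~\ref{theo:hypo} through a three-factor splitting of the semigroup, together with a duality argument.

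\smallskip

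\emph{Reduction to zero mass.} By linearity of \eqref{eq:Kolmogorov}--\eqref{eq:KolmoBdyCond} and mass conservation \eqref{eq:intro_mass_cons}, setting $g_t := f_t - \langle\!\langle f_0\rangle\!\rangle f_\infty$, the function $g_t$ solves the same KFP problem with initial datum $g_0 := f_0 - \langle\!\langle f_0\rangle\!\rangle f_\infty$, and $\langle\!\langle g_t\rangle\!\rangle = 0$ for all $t\ge 0$, since $\langle\!\langle f_\infty\rangle\!\rangle = 1$. It thus suffices to prove $\|g_t\|_{L^p_\omega}\le C e^{-\lambda t}\|g_0\|_{L^p_\omega}$ under the zero-mean constraint.

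\smallskip

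\emph{Factorization.} Fix $T>0$ and, for $t\ge 2T$, write $S_\LLL(t)=S_\LLL(T)\circ S_\LLL(t-2T)\circ S_\LLL(T)$ and estimate the three factors successively. For the first factor, a finite iteration of Theorem~\ref{th:DGNML1Linfty} along the chain $L^p_\omega\to L^{q_1}_{\omega^\theta}\to\cdots\to L^\infty_{\omega^{\theta^N}}$ gives $\|S_\LLL(T)g_0\|_{L^\infty_{\omega^{\theta^N}}}\le C_T\|g_0\|_{L^p_\omega}$. The class $\mathfrak W_2$ will be defined so that the degraded weight $\omega^{\theta^N}$ is strong enough at infinity in $v$ for the embedding $L^\infty_{\omega^{\theta^N}}(\OO)\hookrightarrow \HH$ to hold, which amounts to the integrability of $\omega^{-2\theta^N}\mu^{-1}$ (satisfied e.g.\ for $\omega \ge c\,e^{\kappa|v|^2}$ with $\kappa$ large enough). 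The middle factor enjoys the hypocoercive decay of Theorem~\ref{theo:hypo}: since zero mean is preserved in time, $\|S_\LLL(t-2T)h\|_\HH\le Ce^{-\lambda(t-2T)}\|h\|_\HH$. For the last factor one invokes duality: the adjoint $\LLL^\ast$ of $\LLL$ in $\HH$ is, after the involution $v\mapsto -v$ (which leaves $\mu$, $\SSS$ and $\DDD$ invariant and only exchanges $\Sigma_\pm$), a KFP operator of the same form with the same Maxwell reflection condition. Hence Theorem~\ref{th:DGNML1Linfty} applied to $\LLL^\ast$ transposes into a bound $\|S_\LLL(T)k\|_{L^p_\omega}\le C_T''\|k\|_\HH$. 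Composing the three bounds yields \eqref{eq:th:LimitInfty} for $t\ge 2T$.

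\smallskip

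\emph{Short times and expected main obstacle.} For $t\in[0,2T]$, the estimate reduces to the boundedness of $S_\LLL(t)$ on $L^p_\omega$, itself a routine energy/dissipativity estimate on $\omega$ (with an overall constant absorbing $e^{2\lambda T}$). The delicate point is the third factor, i.e.\ the transfer $\HH\to L^p_\omega$ at the dual level, which rests on a careful identification of the adjoint boundary-value problem (checking that the Maxwell reflection condition self-dualizes under $v\mapsto -v$) and on the precise definition of $\mathfrak W_2$: the weights must be strong enough to close the forward iteration $L^p_\omega\to\HH$ via $L^\infty_{\omega^{\theta^N}}$, and simultaneously be compatible with the dual iteration to ensure $\|S_\LLL(T)k\|_{L^p_\omega}\lesssim \|k\|_\HH$. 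Tracking the joint compatibility of these weight conditions through all iterations is where the bookkeeping becomes most intricate.
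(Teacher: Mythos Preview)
Your three-factor scheme $S_\LLL(t)=S_\LLL(T)\,S_\LLL(t-2T)\,S_\LLL(T)$ is a genuinely different route from the paper. The paper instead splits the \emph{generator}, writing $\LLL=\AA+\BB$ with $\AA f:=M\chi_R(v)f$ compactly supported in $v$; it shows $S_\BB$ decays exponentially in every $L^p_\omega$ for $\omega\in\mathfrak W_2$ (repeating Proposition~\ref{prop:EstimLp} with $\varpi$ replaced by $\varpi-M\chi_R$), then expresses $\bar S_\LLL=V_2+W_1*\bar S_\LLL*W_2$ via iterated Duhamel with $W_1=(S_\BB\AA)^{*n}$, $W_2=(\AA S_\BB)^{*n}$, and applies Theorem~\ref{theo:hypo} only to the middle factor.

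The obstruction you flag in your last paragraph is real and, with Theorem~\ref{th:DGNML1Linfty} used as a black box, it does not close. The embedding $L^\infty_{\omega^\theta}\hookrightarrow\HH$ requires $\omega^{-2\theta}\mu^{-1}\in L^1(\OO)$, hence $\omega^{2\theta}\gtrsim e^{(1/2+\eps)|v|^2}$. But any $\omega\in\mathfrak W_1\subset\mathfrak W_0$ satisfies $\omega\lesssim\MMM^{-1}$ (see \eqref{def:frakW0}), so $\omega^{2\theta}\lesssim e^{\theta|v|^2}$ and one needs $\theta>1/2$. With the concrete weight $\omega=e^{4|v|^2/9}$ and $\theta\le9/16$ exhibited in Step~4 of the proof of Theorem~\ref{th:DGNML1Linfty}, one gets $\omega^{2\theta}\le e^{|v|^2/2}$: exactly borderline, not integrable against $\mu^{-1}$. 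The dual leg $\HH\to L^p_\omega$ suffers the symmetric loss. So the pivot through $\HH$ is not available from Theorem~\ref{th:DGNML1Linfty} alone.

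The paper's mechanism sidesteps this: since $\AA$ multiplies by a function compactly supported in $v$, it maps $L^p_\omega\to L^q_{\omega'}$ boundedly for \emph{any} pair of weights, decoupling the weight transfer from the ultracontractive loss $\omega\mapsto\omega^\theta$. The iterates $(S_\BB\AA)^{*n}$ and $(\AA S_\BB)^{*n}$ then bridge $L^p_\omega$ and $L^2_{\mu^{-1/2}}$ with the decay of $S_\BB$, under no compatibility constraint between $\omega$ and $\theta$. This is why the class $\mathfrak W_2$ in \eqref{def:frakW2} also contains polynomial weights $\langle v\rangle^k$ with $k>d+1$, which your factorization cannot reach even in principle.
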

 
 It is worth emphasizing that the set $\mathfrak W_2$ contains some exponential functions and some polynomial (increasing fast enough) functions.
The case $p=2$ and $\omega = \mu^{-1/2}$ is an immediate consequence of Theorem~\ref{theo:hypo}. The general case is then deduced from this particular one thanks to Theorem~\ref{th:DGNML1Linfty} and some enlargement and shrinking techniques introduced and developed in \cite{MR3779780,MR3488535,MR3465438}.

\medskip
 Let us end the introduction by describing the organization of the paper which is mainly dedicated to the proof of the above results.
 
 \smallskip
 %The sequel of the paper is mainly dedicated to the proof of the above results.
 In Section~\ref{sec:WeightLp} we establish some growth estimates in many weighted Lebesgue spaces on the semigroup associated to  the KFP equation \eqref{eq:Kolmogorov}--\eqref{eq:KolmoBdyCond}--\eqref{eq:initialDatum}.  
 We do not discuss the existence and uniqueness issues about solutions  to  the KFP equation and the construction of the associated positive semigroup which will be discussed in detail in the companion paper \cite{CGMM**}. We however emphasizes that solutions to the KFP equation must be understood in the renormalized sense as defined in \cite{MR0972541,MR2721875} so that the associated trace functions are well defined, see \cite{MR2721875,CGMM**,CM-Landau**} and the references therein. We thus rather focus on the (a priori) estimates by exhibiting suitable twisted weight estimates for the solutions to the  KFP equation \eqref{eq:Kolmogorov}--\eqref{eq:KolmoBdyCond}--\eqref{eq:initialDatum} and its dual counterpart. 

 Section~\ref{sec:proofTh1} is dedicated to the proof of Theorem~\ref{th:DGNML1Linfty}. The strategy mixes Moser's gain of integrability argument of \cite{MR159139} and Nash's duality and interpolation arguments of  \cite{MR0100158}. 
It is also based on a twisted weight argument which is  somehow slightly more elaborated than the one used in the previous sections. 
In Section~\ref{sec:proofTh2}, we prove Theorem~\ref{theo:hypo}
and Section~\ref{sec:proofTh3}  is dedicated to the proof of Theorem~\ref{th:LimitInfty}

%%%%%%%%%%%%%%%%%%%%%%%%%
\section{Weighted $L^p$ growth estimates }
 \label{sec:WeightLp}
 
 This section is devoted to the proof of a first and somehow rough set of growth estimates in some convenient weighted $L^p$ spaces for solutions
 to the KFP equation \eqref{eq:Kolmogorov}--\eqref{eq:KolmoBdyCond}--\eqref{eq:initialDatum} and the associated semigroup that we denote by the same letter $S_\LLL$ whatever is the space in which it is 
 considered.   
 It is classical that we may work at the level of the evolution equation and the associated generator or at the level of the associated semigroup.
 We will do the job at both levels. 
 
 As announced, we will  not bother with too much  rigorous justification but rather establish a priori weighted Lebesgue norm estimates from what we may
very classically deduce the well-posedness of the Cauchy problem \eqref{eq:Kolmogorov}--\eqref{eq:KolmoBdyCond}--\eqref{eq:initialDatum} and also deduce the existence of the associated semigroup. 
The solutions of the KFP equations would have to be understood in a appropriate renormalized sense, but again we will not bother about this
 important but technical point and we will freely make the computations  as if the considered  functions are smooth and  fast enough decaying  at infinity. 
 Because the KFP equation conserves the positivity, the associate semigroup is positive and we may thus only handle with nonnegative functions. 
All these issues are discussed in the companion papers \cite{sanchez:hal-04093201,CGMM**,CM-Landau**} for more general classes of KFP equations and we thus refer to these works for more details.

 \smallskip
 We now introduce the class of weight function we deal with. We denote by $\CCC$ the operator 
\beqn\label{def:CCCf}
\CCC f := \Delta_v f + \Div_v(vf), 
\eeqn
which is nothing but the collision part of the Kinetic Fokker-Planck operator involved in \eqref{eq:Kolmogorov}.  
We observe that for $f,\omega : \R^d \to \R_+$ and $p \in [1,\infty)$, we have 
%$$
%\int_{\R^d} (\CC_0 f) f^{p-1} m^p dv = \int (\LLL f ) \, f |f|^{p-2} m^p =
%$$
\beqn\label{eq:identCCCffp}
\int_{\R^d} (\CCC f) f^{p-1} \omega^p dv =
 - \frac{4 (p-1)}{ p^2}  \int_{\R^d} |\nabla_v (f\omega)^{p/2} |^2  + \int |f|^p \omega^p \varpi,
\eeqn
with 
\beqn\label{def:varpi}
\varpi = \varpi_{\omega,p}(v) := 2 \left(1 - \frac 1p \right) \frac{|\nabla_v \omega|^2 }{ \omega^2}  + \left(\frac 2p -1\right)\frac{ \Delta_v \omega } \omega  + 
\left(1 - \frac 1p \right) d 
- v  \cdot \frac{\nabla_v \omega}{\omega},
\eeqn
see for instance \cite[Lemma~7.7]{sanchez:hal-04093201} and the references therein. 
We define $\mathfrak W$ as the set of radially symmetric  nondecreasing weight functions  $\omega : \R^d \to (0,\infty)$ such that 
$$
\kappa = \kappa_\omega := \max_{p=1,\infty} %\sup_{p \in [1,\infty] }
\sup_{v \in \R^d}  \varpi_{\omega,p} < \infty.
$$

It is worth noticing that  $\omega := \langle v \rangle^k e^{ \zeta \langle v \rangle^s}$, with $k \in \R$ and $s,\zeta \ge 0$, satisfies %(at first order independently of the value of $p$ and thus uniformly on $p$ by taking the worst value of $p$) 
\bean
&&\varpi (v)  \underset{|v|\to \infty}{\sim} (s\zeta)^2 |v|^{2s-2}   -  s \zeta  |v|^{s}  \quad \hbox{if} \quad s > 0, 
\\
&&\varpi (v)  \underset{|v|\to \infty}{\sim}  \frac{d}{p'}-k  \quad \hbox{if} \quad s = 0, 
\eean
so that $\omega \in \mathfrak W$ when  
\beqn\label{eq:condition-example-in-W}
s \in (0,2), \quad\text{or}\quad s =2 \;\text{ and }\; \zeta < 1/2 ,\quad\text{or}\quad s=0.  % \;\text{ and }\; \Red k \ge 0.
\eeqn
On the other hand, we may check 
\beqn\label{eq:calcul-varpiMMM}
\varpi_{\MMM^{-1+1/q},p} (v) = - \frac{1}{q} \left( 1-\frac{1}{q}\right) |v|^2 + \left( \frac{1}{p}+\frac{1}{q} - \frac{2}{pq}\right) d, 
\eeqn
so that  
%On the other hand, we may check that %$\omega \in \mathfrak W$ when  $s=0$ and $k \ge d$.
for the limit case $\omega = \MMM^{-1} \in \mathfrak W$, since then %because for this weight function we compute 
$\varpi_{\MMM^{-1},p} \equiv 2d/p$. We finally define 
\beqn\label{def:frakW0} 
\mathfrak W_0 := \left\{ \omega \in \mathfrak W \,; \,  1 \lesssim \omega \lesssim \MMM^{-1}, \ \omega^{-1} |v|, \, \omega \MMM |v| \in L^1(\R^d) \right\}.
\eeqn

\begin{prop}\label{prop:EstimLp}
For any weight function $\omega \in \mathfrak W_0$, there exist $\kappa \ge 0$ and $C \ge 1$ such that for  any exponent $p \in [1,\infty]$ and any solution $f$ to the KFP equation %Kinetic Fokker-Planck equation with reflection condition at the boundary 
\eqref{eq:Kolmogorov}--\eqref{eq:KolmoBdyCond}--\eqref{eq:initialDatum},  there holds  
\beqn\label{eq:prop:EstimLp}
\| f_t \|_{L^p_\omega} \le C e^{\kappa t} \| f_0 \|_{L^p_\omega} , \quad \forall \, t \ge 0,
\eeqn
and we write equivalently 
\beqn\label{eq:prop:SGEstimLp}
S_\LLL(t) : L^p_\omega \to L^p_\omega, \ \hbox{ with growth rate } \ \OO(e^{\kappa t}), \quad \forall \, t \ge 0. 
\eeqn
\end{prop}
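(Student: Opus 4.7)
My plan is to carry out a direct weighted energy estimate in $L^p_\omega$ for finite $p$, analyze the boundary term produced by the transport, and then recover the endpoint $L^\infty_\omega$ case either by taking $p\to\infty$ (the constants being $p$-uniform) or by a duality argument.

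For $p\in[1,\infty)$, by positivity preservation I may assume $f\ge 0$ and multiply the KFP equation by $pf^{p-1}\omega^p$, integrating over $\OO$. The collision contribution is handled pointwise in $v$ by the identity \eqref{eq:identCCCffp}--\eqref{def:varpi}: discarding the nonpositive gradient (Fisher) term yields
$$\int_\OO p\,(\CCC f)\,f^{p-1}\omega^p\,dxdv \;\le\; p\int_\OO f^p\omega^p\,\varpi_{\omega,p}\,dxdv \;\le\; p\,\kappa_\omega\int_\OO f^p\omega^p\,dxdv.$$
The $p$-uniformity comes from the algebraic identity $\varpi_{\omega,p}=\tfrac{1}{p}\varpi_{\omega,1}+(1-\tfrac{1}{p})\varpi_{\omega,\infty}$, which implies $\sup_{p,v}\varpi_{\omega,p}(v)\le\kappa_\omega$.

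The transport term, using $\omega=\omega(v)$, equals $\int_\OO\mathrm{div}_x(vf^p\omega^p)\,dxdv$ and produces the boundary contribution
$$B \;=\; \int_{\Sigma_+}(\gamma_+ f)^p\omega^p\,(v\cdot n_x)\,d\sigma_x dv \;-\; \int_{\Sigma_-}(\gamma_- f)^p\omega^p\,|v\cdot n_x|\,d\sigma_x dv.$$
The crux is to use the Maxwell condition \eqref{eq:KolmoBdyCond} to bound the incoming integral by the outgoing one. The specular piece contributes exactly zero: the radial symmetry of $\omega$ yields $\omega\circ\VV_x=\omega$ and the Jacobian of $v\mapsto\VV_x v$ is $1$, so the two halves cancel. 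The diffusive piece is treated by Jensen's inequality applied to $r\mapsto r^p$ against the probability measure $(n_x\cdot w)\mathbf{1}_{\Sigma_+^x}dw$, appropriately normalized using $\widetilde\MMM=1$; the integrability tails $\omega\MMM|v|,\omega^{-1}|v|\in L^1$ built into $\mathfrak W_0$ guarantee that the resulting constant is finite and $p$-uniform. Taking the convex combination in $\iota(x)\in[0,1]$ preserves the bound, giving $B\le C'\int f^p\omega^p\,dxdv$ for some constant $C'$. Combining the collision and boundary estimates and applying Gronwall yields \eqref{eq:prop:EstimLp} for every finite $p$.

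For the endpoint $p=\infty$, I would either pass to the limit $p\to\infty$ in the $L^p$ bound (both $\kappa_\omega$ and $C'$ being $p$-uniform, after a density approximation of the initial datum), or invoke duality: the formal adjoint of $\LL$ in the $L^p_\omega$–$L^{p'}_{1/\omega}$ pairing is again a Kolmogorov-type operator (with reversed drift) carrying a Maxwell-type boundary condition of the same algebraic structure, since $\SSS$ is involutive and $\DDD$ is self-adjoint against the Maxwellian pairing; applying the $p=1$ case of the same analysis to the adjoint semigroup then delivers the $p=\infty$ bound for $S_\LL$.

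The main technical obstacle is precisely the diffusive boundary term: one must produce a clean bound of $\int_{\Sigma_-^x}(\DDD\gamma_+ f)^p\omega^p|v\cdot n_x|dv$ by $\int_{\Sigma_+^x}(\gamma_+ f)^p\omega^p(v\cdot n_x)dv$ with a constant that is finite and $p$-uniform. This is exactly where the pointwise envelope $1\lesssim\omega\lesssim\MMM^{-1}$ and the integrability tails $\omega\MMM|v|,\omega^{-1}|v|\in L^1$ in the definition of $\mathfrak W_0$ become essential; without them the Jensen step for the diffusive reflection cannot be closed.
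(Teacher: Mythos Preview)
There is a genuine gap in your treatment of the diffusive boundary term. After the specular piece cancels, what survives (for $p=1$, say, and $\omega\ge 1$) is
\[
-\int_{\Sigma_+}\iota\,\gamma_+ f\,\omega\,|v\cdot n_x|\,+\,K_1(\omega)\int_{\partial\Omega}\iota\,\widetilde{\gamma_+ f},
\qquad K_1(\omega):=\int_{\R^d}\MMM\,\omega\,(v\cdot n_x)_+\,dv,
\]
which is bounded above by $(K_1(\omega)-1)\int_{\partial\Omega}\iota\,\widetilde{\gamma_+ f}$. For any nontrivial $\omega\in\mathfrak W_0$ one has $K_1(\omega)>1$, so a strictly positive boundary contribution remains. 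No Jensen/Darroz\`es--Guiraud argument brings this constant down to~$1$ unless $\omega=\MMM^{-1+1/p}$ exactly (that special case is Lemma~\ref{lem:B11:lem0}). The leftover is a \emph{surface} integral of the outgoing trace; there is no mechanism to bound it by the \emph{volume} integral $\int_\OO f^p\omega^p$, so the step ``giving $B\le C'\int f^p\omega^p\,dxdv$'' is precisely where the argument fails. The integrability tails $\omega\MMM|v|,\,\omega^{-1}|v|\in L^1$ in $\mathfrak W_0$ make the relevant constants finite, but not $\le 1$.

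The paper's fix is to replace $\omega$ by a twisted weight $\widetilde\omega=\omega_A+\tfrac12\,n_x\cdot\widetilde v$: the truncation $\omega_A$ (equal to $1$ on $\{|v|\le A\}$) forces $K_1(\omega_A)\to 1$ as $A\to\infty$, while the geometric correction $n_x\cdot\widetilde v$ manufactures an additional negative boundary term $-\tfrac12 K_0\int_{\partial\Omega}\iota\,\widetilde{\gamma_+ f}$; for $A$ large the total boundary contribution becomes nonpositive (Lemma~\ref{lem:SLestimL1}). The paper explicitly notes that even this modified-weight estimate does \emph{not} yield $p$-uniform constants, so your proposed passage $p\to\infty$ would not go through. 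The $L^\infty_\omega$ bound is obtained instead by running the analogous twisted-weight $L^1_m$ argument on the backward dual problem \eqref{eq:dualKFP} (Lemma~\ref{lem:SLestimLinfty}) and dualizing, after which interpolation covers the intermediate exponents. Your duality suggestion is therefore on the right track, but it needs the same missing modified-weight ingredient on the dual side to close.
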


\smallskip
We start recalling the following classical estimate based on very specific choices of the weight functions, so that Darroz\`es-Guiraud type inequality~\cite{DGineq} may be used.

\begin{lem}\label{lem:B11:lem0} 
For any $p \in [1,\infty]$, the semigroup $S_{\LLL}$ is a contraction on $L^p_{\MMM^{-1+1/p}}$. 
\end{lem}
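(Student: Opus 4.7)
The plan is to pass to the reference variable $g := f/\MMM$, for which the weighted $L^p$ norm becomes an unweighted Dirichlet form against the probability-like measure $\MMM\,dx dv$. A direct computation using $\nabla_v \MMM = -v\MMM$ shows that $g$ satisfies
$$
\partial_t g + v\cdot\nabla_x g \;=\; \MMM^{-1}\Div_v\bigl(\MMM\,\nabla_v g\bigr) \;=\; \Delta_v g - v\cdot\nabla_v g,
$$
and, for $p\in[1,\infty)$,
$\|f\|_{L^p_{\MMM^{-1+1/p}}}^p = \int_\OO g^p\,\MMM\,dxdv.$
Multiplying the equation for $g$ by $p\,g^{p-1}\MMM$ and integrating, the $v$-integration by parts of the collision side yields the nonpositive Dirichlet contribution $-p(p-1)\int g^{p-2}|\nabla_v g|^2\MMM\,dxdv$, while the $x$-integration by parts of the transport side (with $\MMM$ depending only on $v$) leaves only the flux term
$$
B \;:=\; \int_{\Sigma_-}\!|\gamma_- g|^p\,\MMM\,|n_x\cdot v|\,d\sigma dv \;-\; \int_{\Sigma_+}\!|\gamma_+ g|^p\,\MMM\,(n_x\cdot v)\,d\sigma dv.
$$

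The main obstacle is to prove $B\le 0$; this is precisely where the Maxwell boundary condition and a Darroz\`es--Guiraud type Jensen argument come in. Translating \eqref{eq:KolmoBdyCond} via $f = \MMM g$ and using $\MMM(\VV_x v) = \MMM(v)$ together with the normalization $\widetilde \MMM = 1$, the condition becomes
$$
\gamma_- g(x,v) \;=\; (1-\iota(x))\,\gamma_+ g(x,\VV_x v) \;+\; \iota(x)\!\int_{\Sigma_+^x}\!\gamma_+ g(x,w)\,\MMM(w)\,(n_x\cdot w)\,dw.
$$
Since $\MMM(w)(n_x\cdot w)\,dw$ is a probability measure on $\Sigma_+^x$ and $(1-\iota) + \iota = 1$, the right-hand side is a pointwise convex combination of values of $\gamma_+ g$; convexity of $t\mapsto |t|^p$ then gives
$$
|\gamma_- g(x,v)|^p \;\le\; (1-\iota)\,|\gamma_+ g(x,\VV_x v)|^p \;+\; \iota\!\int_{\Sigma_+^x}\!|\gamma_+ g(x,w)|^p\,\MMM(w)(n_x\cdot w)\,dw.
$$
I would then multiply by $\MMM(v)|n_x\cdot v|$ and integrate over $\Sigma_-^x$: the change of variable $v\mapsto \VV_x v$ (an involution $\Sigma_-^x\to\Sigma_+^x$ preserving $\MMM(v)|n_x\cdot v|\,dv$) takes care of the specular term, while the identity $\int_{\Sigma_-^x}\MMM(v)|n_x\cdot v|\,dv = 1$ takes care of the diffusive term. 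Each piece is bounded by the outgoing flux $\int_{\Sigma_+^x}|\gamma_+ g|^p\MMM(n_x\cdot w)\,dw$, so $B\le 0$ and $\frac{d}{dt}\|f_t\|_{L^p_{\MMM^{-1+1/p}}}^p\le 0$.

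For the endpoint $p=\infty$, with weight $\MMM^{-1}$, the same substitution reduces the claim to $\|g_t\|_{L^\infty}\le \|g_0\|_{L^\infty}$. The equation for $g$ has no zero-order term, and the boundary trace $\gamma_- g$ is again a convex combination of values of $\gamma_+ g$, so $\|\gamma_- g\|_\infty\le\|\gamma_+ g\|_\infty$; the standard parabolic maximum principle then concludes. Alternatively, one may pass to the limit $p\to\infty$ in the estimates above.
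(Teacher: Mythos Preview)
Your proof is correct and follows essentially the same strategy as the paper: an $L^p$ energy estimate in which the collision operator contributes a nonpositive Dirichlet term and the boundary flux is controlled via the Darroz\`es--Guiraud/Jensen argument with the change of variables $v\mapsto\VV_x v$. The only difference is cosmetic: you first pass to $g=f/\MMM$, which makes the collision operator manifestly dissipative in $L^p(\MMM\,dxdv)$ and spares you the explicit computation of $\varpi_{\MMM^{-1+1/p},p}\le 0$ that the paper performs directly on $f$.
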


\begin{proof}[Proof of Lemma~\ref{lem:B11:lem0}] We fix  $p \in [1,\infty)$, $0 \le f_0 \in L^p_{\MMM^{-1+1/p}}$ and 
we denote by   $f =f (t,x,v) \ge 0$ the  solution to the Cauchy problem associated to \eqref{eq:Kolmogorov}--\eqref{eq:KolmoBdyCond}--\eqref{eq:initialDatum}.
We compute
\bean
\frac{1}{p}\frac{d}{dt} \int_{\OO} f^p \MMM^{1-p} 
&=&   \int_{\OO}   (\CCC f)  f^{p-1} \MMM^{1-p} - \frac1p\int_{\Sigma} (\gamma f)^p \MMM^{1-p} \, n_x \cdot v 
\\
&\le&  \int_{\OO}  \varpi_{\MMM^{-1+1/p},p} f^p \MMM^{1-p} 
- \frac{1}{p} \int_{\Sigma_{+}} (\gamma_+ f)^p \MMM^{1-p}  |n_x \cdot v| \\
&&
+ \frac{1}{p}\int_{\Sigma_{-}} \left\{ (1-\iota) \SSS \gamma_+ f + \iota \DDD \gamma_+ f \right\}^p \MMM^{1-p}  |n_x \cdot v|, 
\eean
where we have used the Green-Ostrogradski formula in the first line, we have thrown away the first term coming from \eqref{eq:identCCCffp} in the second line, we have split the boundary term into two pieces and we have used the boundary condition on its incoming part in the second and third lines. For the last term we have 
\bean
&& \int_{\Sigma_{-}} \left\{ (1-\iota) \SSS \gamma_+ f + \iota \DDD \gamma_+ f \right\}^p \MMM^{1-p}  |n_x \cdot v|
\\
&&\le  \int_{\Sigma_{-}}   (1-\iota) (\SSS \gamma_+ f)^p \MMM^{1-p}  |n_x \cdot v| +\int_{\Sigma_{-}}   \iota  (\widetilde{\gamma_+ f})^p   \MMM   |n_x \cdot v|
\\
&&\le  \int_{\Sigma_{+}}   (1-\iota) (\gamma_+ f)^p \MMM^{1-p}  |n_x \cdot v| + \int_{\partial\Omega}   \iota   (\widetilde{\gamma_+ f})^p, 
\eean
where we have used the convexity of the function $s \mapsto s^p$ in the second line and we have used both the change of variables $v \mapsto \VV_x v$ in the last integral (which transforms $\Sigma_-$ into $\Sigma_+$ with unit Jacobian) and the normalization condition on $\MMM$ (see \eqref{eq:FPK-def_M}) in the third line. 
Observing next that 
\bean
 (\widetilde{\gamma_+ f})^p 
 &=& \Bigl(\int_{\Sigma^x_+} (\gamma_+ f/\MMM) \, \MMM  |n_x \cdot v| \, dv \Bigr)^p
\\
&\le& \int_{\Sigma^x_+} (\gamma_+ f/\MMM)^p \, \MMM  |n_x \cdot v| \, dv, 
\eean
thanks to the Jensen inequality (also called Darroz\`es-Guiraud's inequality in this context!), which is true  because of  the normalization condition on $\MMM$. We have thus established
\bean
 \int_{\Sigma_{-}} \left\{ (1-\iota) \SSS \gamma_+ f + \iota \DDD \gamma_+ f \right\}^p \MMM^{1-p}  |n_x \cdot v|
\le  \int_{\Sigma_{+}}    (\gamma_+ f)^p \MMM^{1-p}  |n_x \cdot v|,
\eean
from which we obtain 
\bean
\frac{d}{dt} \int_{\OO} f^p \MMM^{1-p} 
\le p \int_{\OO}  \varpi_{\MMM^{-1+1/p},p} f^p \MMM^{1-p}.
\eean
Coming back to \eqref{eq:calcul-varpiMMM}, we observe that %the very definition of $\varpi$ in \eqref{def:varpi}, we compute 
$$
\varpi_{\MMM^{-1+1/p},p}
%\varpi_{\MMM^{-1+\frac{1}{p}} , p} 
(v) 
= - \frac{1}{p} \left( 1-\frac{1}{p}\right) |v|^2 + \frac{2}{p}\left( 1-\frac{1}{p}\right) d \le 0,
$$
from what we immediately deduce that  $S_\LLL$ is a contraction on $L^p_{\MMM^{-1+1/p}}$ when $p \in [1,\infty)$. 
We get the same conclusion in $L^\infty_{\MMM^{-1}}$ by letting $p \to\infty$.
\end{proof}

\smallskip
%Because the weight function considered above degenerates in the sense that the associated function $\varpi$ only 
We extend the decay estimate to a general weight function in a $L^1$ framework by using an appropriate  modification of the initial weight. 
That kind of moment estimate is reminiscent of $L^1$ hypodissipativity techniques, see e.g.\ \cite{MR3489637,MR3779780,MR4179249}.
Our multiplicator is inspired from the usual multiplicator used in order to control the diffusive operator in previous works on the Boltzmann equation, see e.g.\ \cite{ArkerydMaslova,MR1776840,MR2721875,MR4179249}. 
For further references, we define the formal adjoints
\beqn\label{def:CCC*}
\LLL^* := v \cdot \nabla_x + \CCC^*, \quad \CCC^* g:=   \Delta_v g - v \cdot \nabla_v g.
\eeqn

\begin{lem}\label{lem:SLestimL1} 
 Let $\omega : \R^d \to (0,\infty)$ be a radially symmetric nondecreasing
weight function such that $\omega \in \mathfrak W$ and $\MMM \omega |v| \in L^1(\R^d)$. There exists $\kappa \ge 0$ such that we have
$$
S_{\LLL}(t) : L^1(\omega) \to L^1(\omega), \quad \forall \, t \ge 0, 
$$ 
with growth estimate $\OO(e^{\kappa t})$. 
\end{lem}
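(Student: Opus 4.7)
The plan is to repeat the $L^1$ energy computation of Lemma~\ref{lem:B11:lem0} with a general radial weight, and to cure the remaining boundary defect with a small spatial corrector, following the Arkeryd--Maslova multiplier philosophy referenced in the statement.

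Multiplying \eqref{eq:Kolmogorov} by $\omega(v)$ and applying the Green--Ostrogradski formula yields
$$
\frac{d}{dt}\int_\OO f\,\omega\,\dx\dv = \int_\OO f\,\CCC^*\omega\,\dx\dv - \int_\Sigma \gamma f\,\omega\,(n_x\cdot v),
$$
the transport contribution vanishing since $\omega$ depends on $v$ only. The interior term equals $\int f\omega\,\varpi_{\omega,1}\le\kappa\|f\|_{L^1_\omega}$ thanks to $\omega\in\mathfrak W$. For the boundary term, splitting $\Sigma_\pm$, substituting \eqref{eq:KolmoBdyCond}, performing the change of variables $v\mapsto\VV_x v$ on $\Sigma_-$ (which preserves $\omega$ by radial symmetry), and using the definition of $\DDD$ together with $\widetilde\MMM=1$, one obtains
$$
-\int_\Sigma\gamma f\,\omega\,(n_x\cdot v) = \int_{\partial\Omega}\iota\int_{\Sigma_+^x}\gamma_+ f\,[c-\omega(v)]\,|n_x\cdot v|\,\dv\,\mathrm{d}\sigma_x,
$$
with $c:=\int_{w\cdot n<0}\MMM(w)\omega(w)|w\cdot n|\,\mathrm{d}w$ finite (by $\MMM\omega|v|\in L^1$) and independent of $n\in S^{d-1}$ by rotational invariance of $\MMM\omega$. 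The sign of $c-\omega(v)$ is indefinite in general, so this naive computation does not close.

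To absorb the defect I would introduce a twisted weight
$$
m(x,v) := \omega(v) + \alpha\,\psi(\delta(x))\,h(v),
$$
with $\alpha>0$ small, $\psi\in C^1_c([0,\infty))$ nonnegative satisfying $\psi(0)>0$ and $\psi'\le 0$, and $h$ a radial velocity profile chosen so that $m\simeq\omega$ and so that at the boundary one has $\alpha\psi(0)\bigl[h(v)-\tilde c\bigr]\ge c-\omega(v)$ on the region $\{\omega<c\}$, where $\tilde c$ is the analog of $c$ for $h$ in place of $\omega$. Redoing the Green--Ostrogradski computation with $m$, the new interior contributions coming from $v\cdot\nabla_x m = \alpha\psi'(\delta)(n_x\cdot v)\,h$ and from $\alpha\psi(\delta)\CCC^*h$ are controlled by $C_\alpha\|f\|_{L^1_\omega}$ using $\psi,\psi'\in L^\infty$ together with $|v|h,\,\CCC^*h\lesssim\omega$, while the boundary integrand on $\Sigma_+$ becomes nonpositive by the above calibration. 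A Grönwall argument then yields $\|f_t m\|_{L^1}\le e^{\kappa' t}\|f_0 m\|_{L^1}$, and the equivalence $m\simeq\omega$ produces \eqref{eq:prop:EstimLp} at $p=1$.

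The main obstacle is the calibration of the triple $(\alpha,\psi,h)$: the profile $h$ must be large enough to dominate the defect $c-\omega$ through the averaged boundary inequality above, yet small enough with respect to $\omega$ to preserve both the norm equivalence $m\simeq\omega$ and the interior dissipation estimate, and this must be achieved uniformly in the accommodation coefficient $\iota\in W^{1,\infty}(\partial\Omega)$. Once such a multiplier is produced, the remainder of the argument is classical.
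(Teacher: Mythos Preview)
Your diagnosis of the boundary defect is correct, but the corrector you propose cannot close the argument. With a \emph{radial} profile $h$, the modified weight on the boundary is the radial function $m(v)=\omega(v)+\alpha\psi(0)h(v)$, and the boundary integrand on $\Sigma_+^x$ becomes $c_m-m(v)$ with $c_m=\int_{\Sigma_-^x}\MMM(w)m(w)|n_x\cdot w|\,\mathrm{d}w$. Since $\MMM|n_x\cdot w|_-$ is a probability measure, $c_m$ is a genuine average of $m$, so $c_m-m(v)\le 0$ for \emph{all} $v$ forces $m$ to be constant. Thus no choice of $(\alpha,\psi,h)$ with radial $h$ can make the integrand pointwise nonpositive, and since $\gamma_+f\ge 0$ is arbitrary, pointwise nonpositivity is exactly what you need. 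The obstacle you flag at the end is therefore not a matter of calibration but a structural impossibility.

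The paper repairs this with two different ideas. First, the corrector is not radial but \emph{odd in $n_x\cdot v$}: one takes $\widetilde\omega(x,v)=\omega_A(v)+\tfrac12\,n_x\cdot\widetilde v$ with $\widetilde v=\hat v/\langle v\rangle$. The boundary contribution of this odd piece is $-\tfrac12\int_\Sigma\gamma f\,(n_x\cdot\hat v)^2$, which is genuinely sign-definite and, after keeping only the diffusive incoming part, yields $-\tfrac12K_0\int_{\partial\Omega}\iota\,\widetilde{\gamma_+f}$ with $K_0=\int\MMM(n_x\cdot\hat v)_+^2>0$. Second, instead of seeking pointwise control, one truncates $\omega$ near the origin to $\omega_A:=\chi_A+(1-\chi_A)\omega\ge 1$ and uses the crude bound $-\int_{\Sigma_+}\iota\gamma_+f\,\omega_A|n_x\cdot v|\le -\int_{\partial\Omega}\iota\,\widetilde{\gamma_+f}$; the full boundary term is then bounded by $\bigl(K_1(\omega_A)-1-\tfrac12K_0\bigr)\int_{\partial\Omega}\iota\,\widetilde{\gamma_+f}$, where $K_1(\omega_A)=\int\MMM\omega_A(n_x\cdot v)_+\to 1$ as $A\to\infty$ by dominated convergence (this is where the hypothesis $\MMM\omega|v|\in L^1$ enters). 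Choosing $A$ large makes the bracket nonpositive. The interior contributions of both the truncation and the odd corrector are bounded exactly as you indicate, and Gr\"onwall finishes the proof. The essential missing ingredient in your proposal is thus the anisotropic, odd-in-$(n_x\cdot v)$ multiplier; a function of $\delta(x)$ times a radial velocity profile cannot generate the needed sign.
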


It is worth emphasizing that with a very similar proof we may establish the same growth rate in $L^p_\omega$ for $p \in (1,\infty)$, but we were not able to reach the limit exponent $p = \infty$ because our estimates blow up as $p\to\infty$.
%Because $S_{\LL}$ is a positive semigroup, we may assume that $f_0 \ge 0$, so that $f(t) \ge 0$ for any $t \ge 0$. 

\begin{proof}[Proof of Lemma \ref{lem:SLestimL1}] 
  Without loss of generality we may suppose that $\omega \ge 1$. %, otherwise we replace $\omega$ by $c \omega$ where $c>0$ is such that $\omega \ge c^{-1}$. 
We split the proof into two steps. 

% Observe that by hypothesis we have $\omega \lesssim M^{-1}$, hence, without loss of generality, we can assume that $\omega \le M^{-1}$, otherwise we just multiply $\omega$ by a constant. 
\smallskip
\noindent{\sl Step 1. }
For $ 0 \le f_0 \in L^1(\omega)$, 
we denote by   $f = f(t,x,v) \ge 0$ the  solution to the Cauchy problem \eqref{eq:Kolmogorov}--\eqref{eq:KolmoBdyCond}--\eqref{eq:initialDatum}, 
%namely
%\begin{equation}\label{eq:primalKFP}
%\left\{
%\bal
%\partial_t f &=  -v \cdot \nabla_x f + \CCC f    \quad&\text{in} \quad (0,T) \times \OO ,\\
%\gamma_- f &=  \RRR \gamma_+ f   \quad &\text{on} \quad (0,T) \times \Sigma_- , \\ 
% f(0) &= f_0 \quad &\text{in} \quad \OO , 
%\eal
%\right.
%\end{equation}
so that $f(t) = S_{\LLL}(t) f_0$.

We introduce the weight functions 
$$
\omega_A(v) := \chi_A(v) + (1-\chi_A(v)) \omega (v), 
$$
with $\chi_A(v) := \chi(|v|/A)$, $A \ge 1$ to be chosen later and $\chi \in C^2(\R_+)$, ${\bf 1}_{[0,1]} \le \chi \le {\bf 1}_{[0,2]}$, and next 
$$
\widetilde \omega (x,v)  :=  \omega_A(v) + \frac12 n_x \cdot \widetilde v,  
$$
with $\hat v := v/\langle v \rangle$ and $\widetilde{v} :=  \hat v/\langle v \rangle $. It is worth emphasizing that 
\beqn\label{eq:omega&omegatilde}
1 \le \omega_A \le   \omega  \quad \hbox{and}\quad
c_A^{-1} \omega \le \tfrac12\omega_A \le \widetilde \omega  \le \tfrac32 \omega_A, 
\eeqn
with $c_A  \in (0,\infty)$. 
We write 
\beqn\label{eq1:SLestimL1}
\frac{d}{dt} \int_{\OO} f \, \widetilde \omega 
= \int_{\OO} f \, \LLL^* \widetilde \omega  
- \int_\Sigma \gamma f \, \widetilde \omega  \, n_x \cdot v.
\eeqn
We first compute separately each contribution of the boundary term
\bean
B := - \int_\Sigma \gamma f \, \widetilde \omega \,  n_x \cdot v = B_1 + B_2,
\eean
with
\bean
B_1 &:=& - \int_{\Sigma_{+}} \gamma_+ f \omega_A  |n_x \cdot v| 
+ \int_{\Sigma_{-}} \left\{ (1-\iota) \SSS \gamma_+ f + \iota \DDD \gamma_{+} f \right\} \omega_A |n_x \cdot v| 
%- \int_\Sigma \gamma f \, \omega_A  \, n_x \cdot v 
\\
B_2 &:=& -   \frac12 \int_\Sigma \gamma  f \,   (n_x \cdot \hat v)^2. 
\eean
Making the change of variables $  v \mapsto \VV_x v$ in the last integral involved in $B_1$, %which transforms $\Sigma_-$ into $\Sigma_+$, 
we get
$$
\begin{aligned}
B_1 =- \int_{\Sigma_{+}} \iota \gamma_+ f \, \omega_A \,  |n_x \cdot v| 
+ \int_{\Sigma_{+}} \iota \DDD \gamma_{+} f \, \omega_A \,  |n_x \cdot v| .
\end{aligned}
$$
We then define 
\begin{equation}\label{eq:Komega}
K_1(\omega_A ) := \int_{\R^d}  \MMM \, \omega_A  \, (n_x \cdot v)_{+} \, dv,
\end{equation}
which is finite by the assumption on $\omega$, so that 
$$
\int_{\Sigma_{+}} \iota \DDD \gamma_{+} f \, \omega_A \,  |n_x \cdot v| =  \int_{\partial\Omega} \iota K_1(\omega_A) \widetilde{\gamma_{+} f} .
$$
Since $\omega_A \ge 1$, we then obtain  
$$
B_1 \le  \int_{\partial\Omega} \iota (K_1(\omega_A) - 1) \widetilde{\gamma_{+} f} .
$$
On the other hand, denoting
\begin{equation}\label{eq:kappa}
K_0 := \int_{\R^d} \MMM (n_x \cdot \hat v)_+^2  \, dv \in (0,\infty), 
\end{equation}
which we observe is independent of $x$, we have 
\bean
- \int_{\Sigma} \gamma f  (n_x \cdot \hat v)^2
&\le& - \int_{\Sigma_{+}} \iota \DDD \gamma_{+} f   (n_x \cdot \hat v)^2  
=
-  K_0 \int_{\partial\Omega}  \iota \widetilde{\gamma_{+} f} .
\eean
Recalling \eqref{eq:omega&omegatilde} and observing that 
%We now reobserve that $1 \le \omega_A \le \omega$ {  for any $A > 0$ large enough }and
 $\omega_A \to 1$ a.e.\ when $A \to \infty$, we get  $K_1(\omega_A) \to K_1(1) = 1$ 
as $A \to \infty$ thanks to the dominated convergence Theorem of Lebesgue and the normalization condition on $\MMM$. We may thus fix $A \ge 1$ large enough in such a way that
$$
K_1(\omega_A) - 1 - \frac12 K_0 \le 0,
$$
and the contribution of the boundary is nonpositive. 

\smallskip
\noindent{\sl Step 2. }
For the contribution of the volume integral, we write 
\bean
 \LLL^* \widetilde \omega
= \CCC^* \omega +  \CCC^* [\chi_A(1-\omega)] +   \CCC^* [n_x \cdot \widetilde v] + v \cdot \nabla_x (n_x \cdot \widetilde v), 
\eean
  where we recall that  the adjoint   Fokker-Planck operator $\CCC^*$ is defined in \eqref{def:CCC*}. 
Because $\omega \in \mathfrak W$, we have 
$$
\CCC^* \omega \le \varpi_{\omega,1} \, \omega \le \kappa_1 \omega, 
$$
for some $\kappa_1 \in \R$. On the other hand, because $\chi_A$ has compact support and because of the regularity assumption of $\Omega$, we have 
$$
 \CCC^* [\chi_A(1-\omega)] +   \CCC^* [n_x \cdot \widetilde v] + v \cdot \nabla_x (n_x \cdot \widetilde v) \le \kappa_2, 
 $$
 for some $\kappa_2 \in \R_+$. Coming back to \eqref{eq1:SLestimL1}, we deduce that 
\bean
\frac{d}{dt} \int_{\OO} f \, \widetilde \omega \le \kappa  \int_{\OO} f \,  \widetilde \omega,
\eean
with $\kappa := 2 \kappa_1 + c_A\kappa_2$. We immediately conclude thanks to Gr\"onwall's lemma and the comparison \eqref{eq:omega&omegatilde}  between $\omega$ and $\widetilde\omega$. 
\end{proof}

\medskip
We establish now a similar exponential growth estimate in a  general weighted $L^1$ framework for the dual  backward problem associated to \eqref{eq:Kolmogorov}--\eqref{eq:KolmoBdyCond}--\eqref{eq:initialDatum}, namely
\begin{equation}\label{eq:dualKFP}
\left\{
\bal
- \partial_t g &=  v \cdot \nabla_x g + \CCC^* g    \quad&\text{in} \quad (0,T) \times \OO ,\\
\gamma_+ g &=  \RRR^* \gamma_- g   \quad &\text{on} \quad (0,T) \times \Sigma_+ , \\ 
 g(T) &= g_T \quad &\text{in} \quad \OO,  
\eal
\right.
\end{equation}
for any $T \in (0,\infty)$ and any final datum $g_T$.
The adjoint Fokker-Planck operator $\CCC^*$ is defined in \eqref{def:CCC*}, and the adjoint reflection operator $\RRR^*$ is defined by 
$$
\RRR^* g (x,v) = (1-\iota)  \SSS g (x,v) + \iota \DDD^* g (x), $$
with
$$ 
\DDD^* g (x) = \widetilde{\MMM g}(x) := \int_{\R^d} g(x,w) \MMM(w) (n_x \cdot w)_{-} \, dw.
$$ 
Again, we do not discuss the very classical issue about well-posedness in Lebesgue spaces for these problems nor the possibility to approximate the solutions by {\it smooth enough solutions}, which is useful in the following argument. Consider $f$ a solution to the forward Cauchy problem \eqref{eq:Kolmogorov}--\eqref{eq:KolmoBdyCond}--\eqref{eq:initialDatum} and $g$ a solution to the above dual problem  \eqref{eq:dualKFP}. %, which are assumed to be smooth enough  in order to justify the manipulations we perform now. 
We compute (at least formally)
\bean
\int_\OO f(T) g_T  
&=& \int_\OO f_0 g(0) + \int_0^T \int_\OO (\partial_t f g + f \partial_t g  )  \, ds \\
&=& \int_\OO f_0 g(0) - \int_0^T \!\!\! \int_\OO (v \cdot \nabla_x f  g + f v \cdot \nabla_x g ) \, ds
\\
&=& \int_\OO f_0 g(0) - \int_0^T \!\!\! \int_\Sigma (v \cdot n) \gamma f \gamma g \, ds
\\
&=& \int_\OO f_0 g(0) - \int_0^T \!\!\! \int_{\Sigma_+} (v \cdot n) (\gamma_+ f)  (\RRR^* \gamma_- g) \, ds
\\
&& \quad +  \int_0^T \!\!\! \int_{\Sigma_-} |v \cdot n| (\RRR \gamma_+ f) ( \gamma_- g)   \, ds,
\eean 
by using the Green-Ostrogradski formula and the reflection conditions at the boundary. 
%As in the previous lemmas, we do not discuss the quite classical  existence and uniquenes of solutions issue, but rather consider
%smooth enough in the sequel
From the very definition of $\RRR$ and $\RRR^*$, we then deduce  the usual identity
\begin{equation}\label{eq:identite-dualite}
\int_\OO f(T) g_T   
= \int_\OO f_0 g(0), 
\end{equation}
or equivalently that $g(t) = S^*_\LLL(T-t) g_T$. 
We observe now that for a weight function $\omega$, we have 
\beqn\label{eq:CComega-1}
 \CCC \omega^{-1} = \omega^{-1} \varpi_{\omega,\infty}.  
\eeqn
We then define  $\mathfrak N$ the class of weight functions $m : \R^d \to (0,\infty)$ such that 
%by $m\in \mathfrak N$ if  
 $\omega = m^{-1} \in \mathfrak W$. In particular, 
because of  \eqref{eq:CComega-1} and the definition of $ \mathfrak W$, there exists $\kappa' \in \R$ such that 
\beqn\label{eq:CCm}
\CCC m \le \kappa' m .  
\eeqn
We also define 
\beqn\label{def:frakM0}
\mathfrak N_0 := \{ m \in \mathfrak N; \ \MMM \lesssim m, \ m v \in L^1(\R^d) \}.
\eeqn
For further discussion, we emphasize that $\omega \in \mathfrak W_0$ clearly implies $\omega^{-1} \in \mathfrak N_0$.

\begin{lem}\label{lem:SLestimLinfty}
For any weight function $m \in \mathfrak N_0$, 
%Let $m : \R^d \to (0,\infty)$ be a %radially symmetric increasing  weight function such that $m \in \mathfrak N$,   $m \ge \MMM$ and  $m v \in L^1(\R^d)$.
there exists $\kappa \in \R$ such that %for any $m \in \mathfrak N$, $m \ge \MMM$, $m v \in L^1(\R^d)$, there holds 
$$
S^*_\LLL(t) : L^1_m \to L^1_m, \quad \OO(e^{\kappa t}).
$$ 
%for some $\kappa \in \R$. 
More precisely,  there exists $C \ge 1$ such that for any $T > 0$ and any $g_T \in L^1_m$, the associated solution $g$ to the backward dual problem \eqref{eq:dualKFP} satisfies 
\begin{equation}\label{eq:g0&g-<gtC}
\| g(0) \|_{L^1_m}  
\le C e^{\kappa T} \| g_T \|_{L^1_m}. 
\end{equation}
\end{lem}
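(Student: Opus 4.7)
My plan is to dualize the argument of Lemma~\ref{lem:SLestimL1}, working directly with a twisted weight for the backward problem \eqref{eq:dualKFP} (a formal adjoint identity from Lemma~\ref{lem:SLestimL1} would pair $L^1_m$ with $L^\infty_{m^{-1}}$ and thus not match the target). Introduce
\[\widetilde m(x,v):=m_A(v)\bigl(1-\tfrac12\,n_x\cdot\widetilde v\bigr),\qquad m_A(v):=\chi_A(v)\,c_0\MMM(v)+(1-\chi_A(v))\,m(v),\]
with $c_0>0$ chosen so that $m\ge c_0\MMM$ (which exists by $\MMM\lesssim m$). Since $|n_x\cdot\widetilde v|\le 1/2$, the \emph{multiplicative} modifier gives $\widetilde m\simeq m_A$ globally; and once $A$ is fixed, $m_A\simeq m$ on $\R^d$. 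The target is the Gr\"onwall-type inequality $-\tfrac{d}{dt}\int_\OO g\widetilde m\le\kappa\int_\OO g\widetilde m$ (for $g\ge 0$ a solution of \eqref{eq:dualKFP}), from which \eqref{eq:g0&g-<gtC} follows.

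Applying the backward equation and integrating by parts (no boundary contribution from the $v$-integration thanks to decay, and using the $L^2(\R^d_v)$ identity $\int(\CCC^*g)\widetilde m=\int g\,\CCC\widetilde m$) yields
\[-\tfrac{d}{dt}\int_\OO g\widetilde m=\int_\OO g\bigl(\CCC\widetilde m-v\cdot\nabla_x\widetilde m\bigr)+\int_\Sigma\gamma g\,\widetilde m\,(v\cdot n_x).\]
For the volume term, \eqref{eq:CCm} handles $\CCC m_A=\CCC m\le\kappa' m_A$ on $\{|v|\ge 2A\}$, while on the compact set $\{|v|\le 2A\}$ all quantities ($\CCC m_A$, $\CCC(n_x\cdot\widetilde v)$ and $v\cdot\nabla_x(n_x\cdot\widetilde v)$) are uniformly bounded (using $\CCC\MMM=0$, smoothness of $\chi_A$, $\delta\in W^{2,\infty}$, and $|v\cdot\widetilde v|\le 1$), yielding $\CCC\widetilde m-v\cdot\nabla_x\widetilde m\le\kappa_1\widetilde m$ for some $\kappa_1\in\R$.

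The step I expect to be the main obstacle is showing the trace integral is non-positive for $A$ large enough. Substituting $\gamma_+g=(1-\iota)\SSS\gamma_-g+\iota\DDD^*\gamma_-g$, performing the involution $v\mapsto\VV_x v$ (unit Jacobian, preserves the radial $m_A$, flips the sign of $n_x\cdot\widetilde v$), and applying Fubini to the diffusive part, the boundary integral rearranges, in exact analogy with Lemma~\ref{lem:SLestimL1}, into
\[-\int_{\Sigma_-}\iota\gamma_-g\bigl[\widetilde m(x,v)-K(x)\MMM(v)\bigr]|v\cdot n_x|-\int_{\Sigma_-}(1-\iota)\gamma_-g\,m_A(v)(n_x\cdot\hat v)^2,\]
with $K(x):=\int_{\Sigma^x_+}\widetilde m(x,v)\,|v\cdot n_x|\,dv$. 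The second piece is $\le 0$ (and this is exactly where the modifier pays off, producing $(n_x\cdot\widetilde v)(v\cdot n_x)=-(n_x\cdot\hat v)^2$ on $\Sigma_-$). For the first, on $\Sigma_-$ one has $\widetilde m\ge m_A\ge c_0\MMM$; on the other hand, dominated convergence (with integrable dominant $m(v)|v\cdot n_x|$, available by $mv\in L^1$) and the normalization $\widetilde\MMM=1$ yield $K(x)\to c_0(1-K_0/2)<c_0$ as $A\to\infty$ (with $K_0$ as in \eqref{eq:kappa}). Choosing $A$ large enough thus ensures $K(x)\MMM(v)\le c_0\MMM(v)\le\widetilde m(x,v)$ on $\Sigma_-$, making the whole boundary contribution $\le 0$.

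Combining the volume and boundary estimates delivers $-\tfrac{d}{dt}\int_\OO g\widetilde m\le\kappa_1\int_\OO g\widetilde m$, and Gr\"onwall's lemma together with the equivalence $\widetilde m\simeq m$ yields \eqref{eq:g0&g-<gtC} with $\kappa=\kappa_1$ and $C$ absorbing the comparison constants.
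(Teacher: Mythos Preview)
Your approach is the same twisted-weight-plus-Gr\"onwall strategy as the paper, and your boundary analysis---the heart of the argument---is correct. The paper differs in two cosmetic respects: it uses an \emph{additive} twist $\widetilde m:=m_A-\tfrac12(n_x\cdot\widetilde v)\,\MMM$ (after normalising $m\ge\MMM$, so no $c_0$) rather than your multiplicative one, and it treats the $m_A$-part and the $\MMM\,(n_x\cdot\widetilde v)$-part of the energy separately before combining them.

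There is, however, a gap in your volume estimate on $\{|v|\ge 2A\}$. You only invoke $\CCC m_A=\CCC m\le\kappa' m$ there, but with your multiplicative twist the product rule gives
\[
\CCC(m_A\psi)=\psi\,\CCC m_A+m_A\bigl(\Delta_v\psi+v\cdot\nabla_v\psi\bigr)+2\,\nabla_v m_A\cdot\nabla_v\psi,
\qquad \psi:=1-\tfrac12\,n_x\cdot\widetilde v,
\]
and the last two terms are not addressed outside the compact region (nor is $v\cdot\nabla_x\widetilde m=-\tfrac12 m_A\,v\cdot\nabla_x(n_x\cdot\widetilde v)$, which is in fact $\lesssim m_A$ globally). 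The middle term is harmless since $|\Delta_v\psi|+|v\cdot\nabla_v\psi|\lesssim 1$. The cross term, however, requires $|\nabla_v m|\,\langle v\rangle^{-2}\lesssim m$, i.e.\ $|\nabla m|/m\lesssim\langle v\rangle^{2}$. This does follow from $m\in\mathfrak N$: set $\omega:=m^{-1}$ and add $\varpi_{\omega,1}+\varpi_{\omega,\infty}\le 2\kappa_\omega$ to obtain $X^2-|v|\,X\le C$ for $X:=|\nabla\omega|/\omega=|\nabla m|/m$, whence $X\lesssim\langle v\rangle$. You must supply this step. The paper's additive twist sidesteps the issue entirely, since $\CCC\widetilde m=\CCC m_A-\tfrac12\,\CCC\bigl[(n_x\cdot\widetilde v)\MMM\bigr]$ involves only the explicit Gaussian and no derivative of $m$.

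One small slip: on $\Sigma_-$ both $n_x\cdot\widetilde v$ and $v\cdot n_x$ are negative, so their product is $+(n_x\cdot\hat v)^2$; the identity actually producing the good sign is $(n_x\cdot\widetilde v)\,|v\cdot n_x|=-(n_x\cdot\hat v)^2$ on $\Sigma_-$, and your final expression for the specular piece is correct.
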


\begin{proof}[Proof of Lemma~\ref{lem:SLestimLinfty}]    
Without loss of generality we may suppose that $m \ge \MMM$. %, otherwise we replace $m$ by $c m$ where $c>0$ is such that $m \ge c^{-1} \MMM$.
For $T \in (0,\infty)$ and $ 0 \le g_T \in L^1_m$,  we denote by   $g = g(t,x,v)$ the  solution to the backward dual Cauchy problem \eqref{eq:dualKFP}. %associated to \eqref{eq:evolB1}. 
We introduce the weight functions 
\beqn\label{def:mA}
m_A := \chi_A \MMM + (1-\chi_A)m, 
\quad
\widetilde m   :=  m_A - \frac12 (n_x \cdot \widetilde v) \, \MMM,  
\eeqn
with the notations of  Lemma \ref{lem:SLestimL1}.  It is worth emphasizing that 
\beqn\label{eq:m&mtilde} 
\MMM \le m_A \le m \quad\hbox{and}\quad
c_A^{-1} m \le \tfrac12m_A \le \widetilde m  \le \tfrac32 m_A, % \le c_A m ,
\eeqn
with $c_A \in (0,\infty)$. Similarly as in the proof of Lemma \ref{lem:SLestimL1}, we compute
\bean
- \frac{d}{dt} \int_\OO g \, m_A 
%&=&  \int_\OO \bigl(  -v \cdot \nabla_x g + \nu g   \bigr) \, m
%\\
&=& \int_{\OO}  g  \, (\CCC m_A)   +  \int_\Sigma \gamma g  \, m_A \, n_x \cdot v 
\\
&=& \int_{\OO} g   \, (\CCC m_A)  + \int_{\Sigma_+} [ (1-\iota) \SSS \gamma_- g  + \iota \widetilde{  \gamma_- g \MMM}]  \, m_A \, |n_x \cdot v| 
-  \int_{\Sigma_-}  \gamma_- g  \, m_A \, |n_x \cdot v| , 
\eean
where we have used again the Green-Ostrogradski formula in the first line and the reflection condition at the boundary in the second line.
We deduce 
\bean
- \frac{d}{dt} \int_\OO g \, m_A 
&=& \int_\OO  g  \, (\CCC m_A)  -  \int_{\Sigma_-} \iota  \gamma_- g \, m_A \, |n_x \cdot v| 
+ \left( \int_{\R^d} m_A (n_x \cdot v)_+ \, dv \right) \int_{\Sigma_-} \iota  \MMM \gamma_- g  \, |n_x \cdot v|, 
\eean
by making the change of variables $v \mapsto \VV_x v$ on the {\it outgoing part} $\Sigma_+$ of the boundary (which is in fact the {\it incoming part} of the boundary for the backward dual problem). 
Since $m_A \ge \MMM$, we have established a first estimate 
\bean
-\frac{d}{dt} \int_\OO g \, m_A 
&\le&  \int_{\OO}  g  \, (\CCC m_A)   +   \int_{\Sigma_-} \iota (K_1(m_A) - 1)   \MMM \gamma_- g \, |n_x \cdot v|,
\eean
with now
$$
K_1(m_A) :=  \int_{\R^d}  m_A \, (n_x \cdot   v)_+ \, dv    \to 1, \quad \hbox{as} \quad A \to \infty.
$$
On the other hand, with the same notations as in the proof of  Lemma~\ref{lem:SLestimL1}, we have 
\bean
\frac{d}{dt} \int_\OO g \MMM \,   (n_x \cdot   \widetilde{v}) 
&=& \int_{\OO}   g  \CCC( \MMM \,   (n_x \cdot   \widetilde{v}) ) 
- \int_{\OO}   g  \,  \MMM  \, \hat v \cdot D_x n_x \hat v  
-  \int_\Sigma \gamma g \, \MMM \, (n_x \cdot \hat v)^2.
\eean
For the last term, there holds
\bean
\int_\Sigma \gamma g \MMM  \, (n_x \cdot \hat v)^2  
&\ge&   \int_{\Sigma_+} \iota (\DDD^* \gamma_- g) (n_x \cdot \hat v)^2   \, \MMM
\\
&\ge&  \left( \int_{\R^d} \MMM (n_x \cdot \hat v)_+^2      \right)   \int_{\Sigma_-} \iota \MMM \gamma_- g |n_x \cdot v| ,
\eean
which implies a second estimate 
\bean
\frac{d}{dt} \int_\OO g \MMM \, (n_x \cdot  \widetilde{v} )
&\le&  \int_\OO   g  \CCC( \MMM \,   (n_x \cdot   \widetilde{v}) ) 
- \int_\OO   g  \, \MMM  \, \hat v \cdot D_x n_x \hat v  
-  K_0 \int_{\Sigma_-} \iota \MMM \gamma_- g |n_x \cdot v|,
\eean
with now
\begin{equation}\label{eq:kappa0}
K_0 := \int_{\R^d}  \MMM \, (n_x \cdot \hat v)_+^2  \, dv   \in (0,\infty).
\end{equation}
Choosing $A > 0$ large enough such that $K_1(m_A) - 1 - \tfrac12 K_0 \le 0$, the contribution of the boundary is nonpositive and we obtain 
\bean
-\frac{d}{dt} \int_\OO g \, \widetilde m 
&\le&  \int_{\OO}  g  \, [ \CCC m +  \CCC [\chi_A(\MMM - m)] +   \CCC [n_x \cdot \widetilde v \MMM] - v \cdot \nabla_x (n_x \cdot \widetilde v \MMM) ]
\\
&\le& \kappa \int_{\OO}  g   \widetilde m,
\eean
for some $\kappa \in \R$, by arguing similarly as during the proof of Lemma \ref{lem:SLestimL1} and in particular using \eqref{eq:CCm}. By the Gr\"onwall's lemma, we then deduce 
\begin{equation}\label{g0&g-<gt}
\| g(0) \|_{L^1(\tilde m)}  
\le e^{\kappa T} \| g_T \|_{L^1(\tilde m)},
\end{equation}
from which we immediately  conclude to \eqref{eq:g0&g-<gtC}. \end{proof}

We may now come to the proof of the main result of this section. 

\begin{proof}[Proof of Proposition~\ref{prop:EstimLp}] 
For $f_0 \in L^\infty_\omega$, let us define $f (t):= S_\LLL(t) f_0$ the associated flow. 
Because of the duality identity \eqref{eq:identite-dualite}, for any $g_t \in L^1_{\omega^{-1}}$, we have 
\bean
\int_\OO f(t) g_t   
= \int_\OO f_0 g(0)
\le \| f_0 \|_{L^\infty_\omega} \| g(0) \|_{L^1_{\omega^{-1}} }.
\eean 
Together with \eqref{eq:g0&g-<gtC}, we deduce 
\bean
\int_\OO f(t) g_t   \le \| f_0 \|_{L^\infty_\omega} C e^{\kappa t} \| g_t \|_{L^1_{\omega^{-1}}}. 
 \eean 
 Taking the supremum on $g_t$ over the unit ball of $L^1(\omega^{-1})$, we thus conclude that 
$$
\| f(t) \|_{L^\infty_\omega} \le C e^{\kappa t} \| f_0 \|_{L^\infty_\omega}, 
$$
for any $f_0$, which is the desired estimate \eqref{eq:prop:SGEstimLp} when $p=\infty$. The estimate \eqref{eq:prop:SGEstimLp} for $p=1$ has been established in Lemma~\ref{lem:SLestimLinfty}.
We then conclude to the estimate  \eqref{eq:prop:SGEstimLp} for any $p \in [1,\infty]$ by using a standard interpolation argument. 
\end{proof}

\begin{rem}
The conditions on the weight function $\omega$ in the statement of Proposition~\ref{prop:EstimLp} are not optimal but they are more than enough for our purpose. As a matter of fact, we may observe that 

- Lemma~\ref{lem:B11:lem0} gives an estimate on $S_\LLL$ in  $L^1$ and in $L^\infty_{\MMM^{-1}}$; 

- Lemma \ref{lem:SLestimL1} gives an estimate on $S_\LLL$ in  $L^1_\omega$ from $\omega = 1$ and up to $\omega =  \MMM^{-1} \langle v \rangle^{-d-1-\eps}$, $\eps > 0$; 

- Lemma \ref{lem:SLestimLinfty} gives an estimate  on $S_\LLL^*$ in $L^1_m$  from $m = \langle v \rangle^{-d-1-\eps}$, $\eps > 0$,  and up to $m =  \MMM$, and thus an estimate on $S_\LLL$ in  $L^\infty_\omega$ from 
$\omega =    \langle v \rangle^{d+1+\eps}$, $\eps > 0$,  and up to $\omega =  \MMM^{-1}$.

- We may straightforwardly check that $\omega := \langle v \rangle^k e^{\zeta \langle v \rangle^s} \in \frak W_0$ when 
\beqn\label{eq:condition-example-in-W0}
s \in (0,2), \quad\text{or}\quad s =2 \;\text{ and }\; \zeta < 1/2 , \quad\text{or}\quad s=0    \;\text{ and }\;  k > d+1.
\eeqn
\end{rem}

%%%%%%%%%%%%%%%%%%%%%%%%%%%%%%%%%%%%%%%%%%%%%%%%%%%%%%%%%%%%%%%%%%%%%%%%%%%%%%%
\section{Ultracontractivity: Proof of Theorem~\ref{th:DGNML1Linfty}}
 \label{sec:proofTh1}

 \subsection{An improved weighted $L^2$ estimate at the boundary}

 The DeGiorgi-Nash-Moser theory tells us that for parabolic equations some gain of integrability estimates can be obtained by elementary manipulations  when evaluating the evolution of functions $f^q$ for $q \not=1$. That  kind of regularity effect is also called ultracontractivity. % in the associated functional spaces. 
  More recently, a similar theory has been developed for the Kolmogorov equation in the whole space, see in particular  \cite{MR2068847,MR3923847}.
  Our purpose is   to generalize these techniques to a bounded domain framework. 
  In the present framework and in order to be able to deduce next (by interpolation) the same kind of regularity effect in the border $L^1_\omega$ space, we   first consider $q < 1$. 
 Let us observe that for $q \not= 0$ and $f$ a positive solution to the  KFP equation \eqref{eq:Kolmogorov}, we may compute 
$$
 \partial_t f^q +   v \cdot\nabla_x f^q  - v \cdot\nabla_v f^q -qd f^q  -  \Delta_v f^q - 4 \frac{(1-q)}q |\nabla_vf^{q/2}|^2 = 0.
 $$
Multiplying the equation by $\Phi^q := \varphi^q m^q$ with $q \in (0,1)$, $\varphi \in \DD((0,T))$, and integrating in all the variables, we obtain
\beqn\label{eq:weakKolmogorovLq}
\frac1q\int_\Gamma (\gamma f)^q \Phi^q  n_x \cdot v + \frac1q \int_\UU f^q \TT^* \Phi^q
= 4 \frac{(1-q)}{q^2}   \int_\UU |\nabla_v (f\Phi)^{q/2}|^2 + \int_\UU f^q \Phi^q \varpi,
\eeqn
with  $\varpi := \varpi_{m,q}$ defined in  \eqref{def:varpi} and %$\UU := (0,T) \times \OO$, $\Gamma := (0,T) \times \Sigma$, $T \in (0,\infty)$, 
\beqn\label{def:TT*}
\TT^* \Psi := - \partial_t \Psi - v \cdot \nabla_x \Psi. 
\eeqn
Alternatively, defining 
\beqn\label{def:TT}
\TT := \partial_t + v \cdot \nabla_x
\eeqn
and recalling that $\CCC$ has been defined in \eqref{def:CCCf}, we may write 
$$
\TT \frac{f^q }{ q} = f^{q-1} \TT f = f^{q-1} \CCC f,
$$
%with $\TT := \partial_t + v \cdot \nabla_x$ and $\CCC$ defined above, 
so that 
\bean
\frac1q\int_\Gamma (\gamma f)^q \Phi^q  n_x \cdot v + \frac1q \int_\UU f^q \TT^* \Phi^q
= \int_\UU f^{q-1} (\CCC f)   \Phi^q, 
\eean
from what we deduce \eqref{eq:weakKolmogorovLq} with the help of \eqref{eq:identCCCffp}-\eqref{def:varpi}.

 \smallskip
 We now establish a key new moment estimate on  the KFP equation  \eqref{eq:Kolmogorov}--\eqref{eq:KolmoBdyCond}--\eqref{eq:initialDatum} which makes possible to control a solution near the boundary. 
 The proof is based on the introduction of an appropriate weight function which combines the twisting term used in the previous section and the  twisting term used in \cite[Section~11]{sanchez:hal-04093201}, that last one being in the spirit of  moment arguments used in \cite{MR1166050,MR3591133}.

\begin{prop}\label{prop:TheLqEstim}
Let $q \in (0,1)$ and $m : \R^d \to (0,\infty)$ be a radially symmetric decreasing  weight function such that   $m^{\frac{q}{1-q}} |v| \in L^1(\R^d)$.
There exists  $C = C(q,m,\Omega)>0$  such that for any nonnegative solution $f$  to the KFP equation \eqref{eq:Kolmogorov}--\eqref{eq:KolmoBdyCond}--\eqref{eq:initialDatum} and any test function $0 \le \varphi \in \DD((0,T))$,  there holds  
$$
\int_\UU f^q \widetilde m^q  \frac{(n_x \cdot \hat v)^{2}}{\delta^{1/2}} \varphi^q+  
\int_\UU  |\nabla_v(f^{q/2} \widetilde m^{q/2})|^2  \varphi^q
\le 
C \int_\UU  f^q m^q [|\partial_t \varphi^q| +  \langle \varpi_- \rangle\varphi^q],
$$
where  $\widetilde m$ is a   modified weight function such that $ m \lesssim \widetilde m \lesssim  m$ and $\varpi := \varpi_{\tilde m,q}$ is defined in  \eqref{def:varpi}.

\end{prop}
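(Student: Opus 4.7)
The plan is to apply the weak identity \eqref{eq:weakKolmogorovLq} with a test function $\Phi = \varphi\,\widetilde m$, where $\widetilde m$ is a twisted weight combining the boundary-handling correction used in Lemma~\ref{lem:SLestimL1} with a moment-type correction in the spirit of the references cited above. Concretely, I would take
$$
\widetilde m(x,v) := m(v)\bigl(1 - \alpha\,(n_x\cdot\widetilde v) - \beta\,\delta(x)^{1/2}(n_x\cdot\widetilde v)\bigr),
$$
with $\widetilde v := v/\langle v\rangle^2$ as in Lemma~\ref{lem:SLestimL1}, and $\alpha,\beta\in(0,1)$ small enough (using $|n_x\cdot\widetilde v|\le 1$) that $\widetilde m$ and $m$ are comparable.

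For the interior contribution, using $\nabla_x\delta = -n_x$ together with the $W^{2,\infty}$-regularity of $\delta$, I compute
$$
v\cdot\nabla_x\bigl[\delta^{1/2}(n_x\cdot\widetilde v)\bigr] = -\tfrac12\,\delta^{-1/2}(n_x\cdot v)(n_x\cdot\widetilde v) + O(\delta^{1/2}) = -\tfrac12\,\delta^{-1/2}(n_x\cdot\hat v)^2 + O(\delta^{1/2}),
$$
while $v\cdot\nabla_x(n_x\cdot\widetilde v) = v\cdot Dn_x\cdot\widetilde v$ is uniformly bounded since $|v\,\widetilde v|\le 1$. Expanding $\tfrac1q\,v\cdot\nabla_x\widetilde m^q = \widetilde m^{q-1}\,m\,v\cdot\nabla_x\Psi$ with $\Psi$ the twist factor, and using $\widetilde m^{q-1} m \sim \widetilde m^q$, the term $\tfrac1q\int_\UU f^q\,\TT^*\Phi^q$ produces the desired positive interior gain $\sim\beta\int_\UU f^q\widetilde m^q\,\delta^{-1/2}(n_x\cdot\hat v)^2\varphi^q$, plus an $O(1)$-error controlled by $C\int_\UU f^q m^q\varphi^q$, plus the time-derivative contribution bounded by $C\int_\UU f^q m^q|\partial_t\varphi^q|$. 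The $\varpi$-term in \eqref{eq:weakKolmogorovLq} is handled via $-\varpi\le\varpi_-\le\langle\varpi_-\rangle$.

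For the boundary contribution, since $\delta=0$ on $\partial\Omega$, only the $\alpha$-twist survives there: $\Phi^q\vert_\Gamma = \varphi^q m^q(1-\alpha(n_x\cdot\widetilde v))^q$. I split $\tfrac1q\int_\Gamma(\gamma f)^q\Phi^q\,n_x\cdot v = A-B$ into outgoing and incoming parts. For $q\in(0,1)$ the concavity of $s\mapsto s^q$ yields the super-additivity $((1-\iota)a+\iota b)^q \ge (1-\iota)a^q + \iota b^q$, hence a lower bound on $(\gamma_- f)^q$. The change of variables $v\mapsto \VV_x v$ in the specular piece (using the radiality $m(\VV_x v)=m(v)$ and the antisymmetry $n_x\cdot\widetilde{\VV_x v}=-(n_x\cdot\widetilde v)$) combined with the elementary estimate $(1+s)^q-(1-s)^q \ge cqs$ for $s\in[0,\tfrac12]$ produces, on $\Sigma_+$, a coercive excess of size $c\alpha\int_{\Gamma_+}(1-\iota)(\gamma_+ f)^q(n_x\cdot\hat v)^2\varphi^q m^q$ that goes to the correct side of the inequality. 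The residual diffusive contribution $\tfrac1q\int_{\Gamma_-}\iota(\DDD\gamma_+ f)^q\Phi^q|n_x\cdot v|$ is finite by the integrability assumption $m^{q/(1-q)}|v|\in L^1(\R^d)$ and can be absorbed by this excess together with a reverse Darroz\`es--Guiraud (concave Jensen) inequality applied to $\widetilde{\gamma_+ f}^q$, giving $A-B\le C\int_\UU f^q m^q\varphi^q$.

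\emph{Main obstacle.} The delicate step is the boundary analysis in the regime $q\in(0,1)$: the concavity of $s\mapsto s^q$ reverses the direction of Jensen's inequality used in Lemma~\ref{lem:B11:lem0}, so the straightforward contraction argument is unavailable. The twist $-\alpha(n_x\cdot\widetilde v)$ is introduced precisely to generate a compensating coercive contribution; the tuning of $(\alpha,\beta)$ must simultaneously guarantee $\widetilde m\sim m$, the interior $\delta^{-1/2}$ gain, and the boundary absorption of the diffusive reflection --- this joint adjustment, inspired by the combination of the twists in \cite{sanchez:hal-04093201} and Lemma~\ref{lem:SLestimL1}, is the technical heart of the proof.
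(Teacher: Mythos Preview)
Your interior analysis is essentially right, but the boundary step has a genuine gap: you are missing the cutoff device that the paper relies on. The paper does \emph{not} twist $m$ directly; it first replaces $m^q$ by
\[
m_A^q := \chi_A\,\MMM^{1-q} + (1-\chi_A)\,m^q,
\]
and only then applies the two twists, writing $\widetilde m^q = m_A^q\bigl(1 - \tfrac14 n_x\cdot\widetilde v + \tfrac{1}{4D^{1/2}}\delta^{1/2} n_x\cdot\widetilde v\bigr)$. After concavity, change of variables, and H\"older, the untwisted part of the boundary term is bounded below by $\bigl(K_1(m_A)-K_2(m_A)^{1-q}\bigr)\int_{\partial\Omega}\iota(\widetilde{\gamma_+f})^q$, with $K_1(\cdot)=\int\MMM^q(\cdot)^q(n_x\cdot v)_-$ and $K_2(\cdot)=\int(\cdot)^{q/(1-q)}(n_x\cdot v)_+$. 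By H\"older one always has $K_1(m)\le K_2(m)^{1-q}$, with equality only when $m^q\propto\MMM^{1-q}$; hence for a generic $m$ the base contribution is strictly negative and must be compensated. The twist $-\tfrac14 n_x\cdot\widetilde v$ contributes, via $(n_x\cdot\widetilde v)(n_x\cdot v)=(n_x\cdot\hat v)^2$, the nonnegative term $\tfrac14\int_\Sigma(\gamma f)^q m_A^q(n_x\cdot\hat v)^2\ge K_0(m_A)\int_{\partial\Omega}\iota(\widetilde{\gamma_+f})^q$. The whole point of the $m_A$ cutoff is that $m_A^q\to\MMM^{1-q}$ as $A\to\infty$, forcing $K_1(m_A)\to 1$, $K_2(m_A)\to 1$ and $K_0(m_A)\to K_0(\MMM^{1/q-1})>0$, so that $K_0+K_1-K_2^{1-q}>0$ for $A$ large. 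Without this cutoff the sign condition simply fails for general $m$.

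Your proposed substitute --- a ``coercive excess'' coming from the antisymmetry of $n_x\cdot\widetilde v$ under $\VV_x$ --- does not close the estimate. That excess is of size $O(\alpha)$ and carries a factor $(1-\iota)$, whereas the diffusive deficit you must absorb is of size $O(1)$ in $\alpha$ and carries a factor $\iota$; in particular it vanishes identically when $\iota\equiv 1$. The ``reverse Darroz\`es--Guiraud'' you invoke gives $(\widetilde{\gamma_+f})^q\ge\int_{\Sigma_+^x}(\gamma_+f)^q\MMM^{1-q}|n_x\cdot v|$, which compares against $\MMM^{1-q}$, not $m^q$ --- again pointing to the need for the $m_A$ interpolation. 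Finally, the claim that the boundary remainder is bounded by $C\int_\UU f^q m^q\varphi^q$ is unjustified: boundary integrals of $(\gamma_+f)^q$ are not controlled by volume integrals here. The fix is exactly the paper's: introduce $m_A$, choose $A$ large so that \eqref{eq:conditionK1K2K3} holds, and then the boundary contribution is nonnegative and can be discarded.
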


\begin{proof}[Proof of Proposition~\ref{prop:TheLqEstim}] 
We fix $q \in (0,1)$ and we introduce the modified weight functions 
\beqn\label{def:mABIS}
m_A^q := \chi_A \MMM^{1-q} + (1-\chi_A) m^q, 
\eeqn
for $A \ge 1$ and with the notations of  Lemma \ref{lem:SLestimL1}. We next introduce the function
\bean
\Phi^q := \varphi^q \,  \widetilde m^q, \quad   \widetilde m^q :=   m_A^q - \frac{ m_A^q  }{ 4}  n_x \cdot \widetilde v + \frac{m_A^q }{ 4 D^{1/2}} \delta(x)^{1/2}  n_x \cdot \widetilde v ,
\eean
where  $D= \sup\delta$ is half the diameter of $\Omega$, so that in particular an estimate similar to \eqref{eq:m&mtilde} holds. From \eqref{eq:weakKolmogorovLq}, we have 
\begin{equation}\label{eq:integrate_fq}
\begin{aligned}
& 4\frac{(1-q)}{q}   \int_\UU |\nabla_v (f\Phi)^{q/2}|^2 - \int_\Gamma (\gamma f)^q \Phi^q  n_x \cdot v - \int_\UU f^q \TT^*_2 \Psi_3
 \\
&= \int_\UU f^q \TT^*_2 \Psi_{12} - q  \int_\UU f^q \Phi^q \varpi + \int_\UU f^q \TT^*_1 \Phi^q, 
\end{aligned}
\end{equation}
where $\TT^*_1 = - \partial_t$, $\TT^*_2 = - v \cdot \nabla_x$, $\varpi = \varpi_{\tilde m,q}$  and 
\bean
 \Psi_{12} := \varphi^q m_A^q \left  ( 1  -   \frac{ 1 }{ 4} n_x \cdot \widetilde v \right), \quad
\Psi_3 :=  \varphi^q \, \frac {m_A^q }{ 4 D^{1/2}} \delta(x)^{1/2}  n_x \cdot \widetilde v.
\eean

\medskip
We now compute each term separately.

\smallskip\noindent
\textit{Step 1.}
For the second term at the left-hand side of \eqref{eq:integrate_fq}, we observe that 
$$
- \int_\Sigma (\gamma f)^q m_A^q n_x \cdot v
= - \int_{\Sigma_+} (\gamma_{+} f)^q m_A^q |n_x \cdot v| +  \int_{\Sigma_-} (\gamma_{-} f)^q m_A^q |n_x \cdot v|
$$
and, using the boundary condition together with the fact that the map $s \mapsto s^q$ is concave, we get
$$
\begin{aligned}
&\int_{\Sigma_-} \left\{ (1-\iota)\SSS \gamma_{+} f + \iota \DDD \gamma_{+} f \right\}^q m_A^q |n_x \cdot v| \\
&\qquad \ge \int_{\Sigma_-}  (1-\iota)(\SSS \gamma_{+} f)^q m_A^q |n_x \cdot v|
+ \int_{\Sigma_-} \iota ( \widetilde{\gamma_{+} f})^q \MMM^q m_A^q |n_x \cdot v|.
\end{aligned}
$$
Removing the contribution of the specular reflection thanks to the change of variables $v \mapsto \VV_x v$ as in the proof of Lemmas~\ref{lem:SLestimL1} and~\ref{lem:SLestimLinfty}
and using the H\"older inequality in order to manage the term involving $K_2$, we  therefore obtain
\bean
- \int_\Sigma (\gamma f)^q m_A^q n_x \cdot v
&\ge& 
\int_{\Sigma_-} \iota (\widetilde{\gamma_+ f})^q \MMM^q m_A^q (n_x \cdot v)_- - \int_{\Sigma_+} \iota (\gamma_+ f)^q m_A^q (n_x \cdot v)_+
\\
&\ge& (K_1(m_A) - K_2(m_A)^{1-q}) \int_{\partial\Omega}  \iota (\widetilde{\gamma_+ f})^q,
\eean
with 
$$
K_1 (m_A) := \int_{\R^d}  \MMM^q m_A^q (n_x \cdot v)_- dv < + \infty, \quad K_2 (m_A) := \int_{\R^d}   m_A^{\tfrac q {1-q}}  (n_x \cdot v)_+ dv < + \infty. 
$$
On the other hand, we have 
\bean
 \int_\Sigma (\gamma f)^q m_A^q \frac{(n_x \cdot \hat v)^2}4 
&\ge& K_0(m_A)   \int_{\partial\Omega}  \iota (\widetilde{\gamma_+ f})^q
\eean
with 
$$
K_0 (m_A) := \frac14  \int_{\R^d}  \MMM^q m_A^q (n_x \cdot \hat v)^2_- dv.
$$
Both together, we obtain 
$$
- \int_\Sigma (\gamma f)^q \widetilde m^q  n_x \cdot v \ge \left[ K_0(m_A) + K_1(m_A) - K_2(m_A)^{1-q} \right] \int_{\partial\Omega}  \iota (\widetilde{\gamma_+ f})^q.
$$
Observing that $m_A \to \MMM^{\frac{1}{q}-1}$ when $A \to \infty$, we deduce that $K_1(m_A) \to K_1(\MMM^{\frac{1}{q}-1}) = 1$, $K_2(m_A) \to K_2(\MMM^{\frac{1}{q}-1}) = 1$ and $K_0(m_A) \to K_0(\MMM^{\frac{1}{q}-1}) > 0$
 as $A \to \infty$, thanks to the integrability condition made on $m$ and the dominated convergence theorem of Lebesgue. We may thus choose $A > 0$ large enough in such a way that  
\beqn\label{eq:conditionK1K2K3}
K_0(m_A) + K_1(m_A) - K_2(m_A)^{1-q} \ge 0.
\eeqn

\smallskip\noindent
\textit{Step 2.}
In order to deal with the third term at the left-hand side of \eqref{eq:integrate_fq}, we define  $\psi := \delta(x)^{1/2}  n_x \cdot \widetilde v$. Observing that 
 $\langle v \rangle \psi \in L^\infty(\OO)$, $ \nabla_v \psi \in L^\infty(\OO)$ and 
$$
v \cdot \nabla_x \psi = \frac12 \frac1{\delta(x)^{1/2}} (\hat v \cdot n_x)^2 +    \delta(x)^{1/2} \hat v \cdot D_x n_x \hat v, 
$$
we compute
\bean
- \int_\UU f^q \TT^*_2 \Psi_3  =
\frac{1 }{ 4 D^{1/2}}  \int_\UU f^q \varphi^q m_A^q \left\{ \frac12 \frac1{\delta(x)^{1/2}} (\hat v \cdot n_x)^2 +    \delta(x)^{1/2} \hat v \cdot D_x n_x \hat v \right\}.
\eean
We may now conclude. 
Because of \eqref{eq:conditionK1K2K3}, we may get rid of the boundary term, and together with the last inequality, we get  
\bean
 &&  4\frac{1-q }{ q}   \int_\UU |\nabla_v (f \widetilde m)^{q/2}|^2 \varphi^q  + \frac{1 }{ 8 D^{1/2}}  \int_\UU f^q \varphi^q m_A^q \  \frac1{\delta(x)^{1/2}} ( n_x \cdot \hat v)^2    
  \\
 &&\quad \le   \frac14   \int_\UU f^q m_A^q \varphi^q \,  v \cdot \nabla_x (n_x \cdot \widetilde v)  - q  \int_\UU f^q \varphi^q \widetilde m^q \varpi  
  - \int_\UU  f^q  \widetilde m^q \partial_t \varphi^q
 \\
 &&\qquad - 
 \frac{1 }{ 4 D^{1/2}}  \int_\UU f^q \varphi^q m_A^q   \delta(x)^{1/2} \hat v \cdot D_x n_x \hat v \
  \\
 &&\quad \le C_{\Omega,A}   \int_\UU  f^q  m^q \langle \varpi_-\rangle \varphi^q  + C_A \int_\UU f^q  m^q  | \partial_t \varphi^q| , 
    \eean 
where we have used that $\delta \in W^{2,\infty}(\Omega)$ and $\Omega $ is bounded.   
\end{proof}

Using an interpolation argument, we may write our previous weighted $L^q$ estimate in a more convenient way where the penalization of a neighborhood of the boundary is made clearer. In order to do this, we use the following %start recalling a classical  
interpolation estimate.

\begin{lem}\label{lem:EstimL2poids} %We assume $d \ge 3$ and 
We set $\beta  := (2(d+1))^{-1}  $. 
 For any function $g : \OO \to \R$, there holds 
\beqn\label{eq:EstimL2poids}
\int_{\OO}  \frac{g^2 }{ \delta^{\beta}} \lesssim 
\int_{\OO} (g \langle v \rangle)^2  \frac{(n_x \cdot \hat v)^{2} }{  \delta^{1/2}} + 
\int_{\OO}  |\nabla_v (g \langle v \rangle)|^2.
\eeqn
\end{lem}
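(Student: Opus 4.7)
The plan is to reduce the inequality, pointwise in $x$, to a one-dimensional Sobolev estimate along the normal direction. Writing $h := g \langle v \rangle$, the target inequality reads
$$
\int_\OO \frac{h^2}{\langle v \rangle^2 \delta^\beta}\,dx\,dv \lesssim \int_\OO \frac{h^2 (n_x \cdot \hat v)^2}{\delta^{1/2}}\,dx\,dv + \int_\OO |\nabla_v h|^2\,dx\,dv.
$$
Since $\delta(x)$ and $n_x$ only depend on $x$, I freeze $x$ and aim to prove the fibrewise estimate
$$
\int_{\R^d} \frac{h^2}{\langle v \rangle^2}\,dv \lesssim \alpha^{-2}\int_{\R^d} h^2 (n \cdot \hat v)^2\,dv + \alpha^2 \int_{\R^d} |\nabla_v h|^2\,dv, \qquad \forall\, \alpha \in (0,\tfrac12],
$$
where $n = n_x$. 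Once this is in hand, I take $\alpha = \min(\delta(x)^{\beta/2}, 1/2)$ and multiply by $\delta^{-\beta}$: in the nontrivial regime $\delta^{\beta/2} \le 1/2$ the first coefficient becomes $\alpha^{-2}\delta^{-\beta} = \delta^{-2\beta} \le D^{1/2-2\beta}\delta^{-1/2}$, using $\beta = 1/(2(d+1)) \le 1/4$ and $\delta \le D := \sup \delta$; the second coefficient is $\alpha^2 \delta^{-\beta} \le 1$. When $\delta^{\beta/2} > 1/2$, both $\delta^{-\beta}$ and $\delta^{-1/2}$ are bounded and the inequality is immediate. Integration in $x$ then concludes the proof.

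For the fibrewise inequality, decompose $\R^d_v = G \cup B$ with $G := \{|n\cdot \hat v| \ge \alpha\}$ and $B := \R^d_v \setminus G$. On $G$, using $\langle v\rangle \ge 1$,
$$
\int_G \frac{h^2}{\langle v \rangle^2}\,dv \le \alpha^{-2} \int_G h^2(n\cdot \hat v)^2\,dv,
$$
so the good region contributes the first term. On $B$, write $v = v_n n + v_\tau$ with $v_n = n\cdot v$ and $v_\tau \in n^\perp$. The condition $|n\cdot \hat v| < \alpha$ becomes $|v_n| \le \alpha \langle v_\tau\rangle/\sqrt{1-\alpha^2}$, so for each $v_\tau$ the bad set is an interval $I = I(v_\tau) \subset \R$ with $|I| \lesssim \alpha \langle v_\tau\rangle$. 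I pair it with the adjacent interval $J = J(v_\tau) := [\alpha\langle v_\tau\rangle, 2\alpha\langle v_\tau\rangle]$; for $\alpha \le 1/2$ one checks $|n\cdot \hat v| \gtrsim \alpha$ on $J$, while $\langle v\rangle \sim \langle v_\tau\rangle$ uniformly on $I\cup J$.

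The one-dimensional bookkeeping is standard. By the fundamental theorem of calculus and Cauchy--Schwarz, for all $p \in J$ and $v_n \in I$,
$$
h(v_n, v_\tau)^2 \le 2 h(p, v_\tau)^2 + 2|I| \int_I |\partial_{v_n} h(s, v_\tau)|^2\,ds.
$$
Integrating over $v_n \in I$, averaging over $p \in J$, and dividing by $\langle v_\tau \rangle^2 \sim \langle v \rangle^2$,
$$
\int_I \frac{h^2}{\langle v\rangle^2}\,dv_n \lesssim \int_J \frac{h^2}{\langle v\rangle^2}\,dv_n + \alpha^2 \int_I |\partial_{v_n} h|^2\,dv_n \lesssim \alpha^{-2}\int_J h^2 (n\cdot \hat v)^2\,dv_n + \alpha^2\int_I |\partial_{v_n} h|^2\,dv_n,
$$
using $|n\cdot \hat v| \gtrsim \alpha$ on $J$ at the last step. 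Integration in $v_\tau \in n^\perp$ and addition to the good-region bound yield the fibrewise inequality.

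The main (and essentially only) obstacle is the careful choice of the slabs $I$ and $J$ and the verification that both $\langle v\rangle \sim \langle v_\tau\rangle$ and $|n\cdot \hat v| \gtrsim \alpha$ persist uniformly for $\alpha \le 1/2$; this is what makes the Young-type balance $(\alpha^{-2}, \alpha^2)$ work, and it is the reason the specific value $\beta = 1/(2(d+1))$ (which is $\le 1/4$ for $d\ge 3$) is admissible in order to convert $\delta^{-2\beta}$ into $\delta^{-1/2}$ by a uniform factor.
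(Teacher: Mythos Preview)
Your argument is correct and genuinely different from the paper's. The paper splits $\int_\OO g^2\delta^{-2\eta}$ according to whether $(n_x\cdot v)^2>\delta^{2\zeta}$, bounds the first piece trivially, and on the thin slab $\{|n_x\cdot v|\le\delta^\zeta\}$ applies H\"older in $v$ together with the $d$-dimensional Sobolev embedding $H^1_v\hookrightarrow L^{2^*}_v$ and a volume estimate for the slab. Matching exponents forces $2\zeta+2\eta=1/2$ and $2\eta=2\zeta/d$, which produces exactly $\beta=2\eta=(2(d+1))^{-1}$. Your approach instead freezes $x$, proves a scale-invariant fibrewise inequality with parameter $\alpha$ via a one-dimensional Poincar\'e/FTC argument along the $n$-direction, and then optimises $\alpha\sim\delta^{\beta/2}$. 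This is more elementary (no Sobolev embedding) and in fact sharper: it yields the stated inequality for any $\beta\le 1/4$, so the specific value $\beta=(2(d+1))^{-1}$ is admissible but not forced by your method. For the purposes of the paper any positive $\beta$ suffices, so both routes are fine; yours just happens to give more.

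One small imprecision to fix: in the FTC step you write $h(v_n,v_\tau)^2\le 2h(p,v_\tau)^2+2|I|\int_I|\partial_{v_n}h|^2$, but the path from $v_n\in I$ to $p\in J$ is not contained in $I$. Replace $|I|\int_I$ by $|p-v_n|\int_{[v_n,p]}$ and bound this by $C\alpha\langle v_\tau\rangle\int_{\mathrm{conv}(I\cup J)}|\partial_{v_n}h|^2$; since $\mathrm{conv}(I\cup J)$ still has length $\lesssim\alpha\langle v_\tau\rangle$ and the final derivative term is over all of $\R^d$, nothing downstream changes.
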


\begin{proof}[Proof of Lemma~\ref{lem:EstimL2poids}] 
For $\eta,\varsigma > 0$,  we start by writing
\bean
\int_{\OO} g^2 \delta^{-2\eta}
&=& 
\int_{\OO} \frac{ g^2 }{  \delta^{ 2\eta}}    {\bf 1}_{(n_x \cdot v)^{2} > \delta^{2\varsigma}} 
+ \int_{\OO}   \frac{ g^2 }{  \delta^{ 2\eta}}  {\bf 1}_{|n_x \cdot v|\le \delta^{\varsigma}}  = : T_1 + T_2.
\eean
For the first term, we have 
\bean
T_1 
\le
\int_{\OO} g^2   {\bf 1}_{(n_x \cdot v)^{2} > \delta^{2\varsigma}} \frac{(n_x \cdot v)^{2} }{  \delta^{2\varsigma+2\eta}} 
\le 
\int_{\OO} g^2   \frac{(n_x \cdot v)^{2} }{  \delta^{1/2}} ,
\eean
by choosing $2\varsigma + 2\eta = 1/2$.
For the second term, we define $2^* := 2d/(d-2)$  the  Sobolev exponent in dimension $d \ge 3$, 
and we compute
\bean
T_2
&\le&  
\int_\Omega  \delta^{-2\eta}  \Bigl( \int_{\R^d}  (\langle v \rangle  g)^{2^*}  \Bigr)^{2/2^*}  \Bigl( \int_{\R^d} \langle v \rangle^{- d}    {\bf 1}_{ |n_x \cdot v | \le \delta^{\varsigma}}   \Bigr)^{2/d}
\\
&\lesssim&  
  \int_{\Omega} \delta^{-2\eta+2\varsigma/d}    \int_{\R^d} |\nabla_v (\langle v \rangle g)|^2,
\eean
where we have used the H\"older inequality in the first line and the Sobolev inequality in the second line together with the observation that 
 $\langle v \rangle^{-d}   \in L^\infty(\R;L^{1}(\R^{d-1}))$. 
Choosing $2\varsigma/d = 2\eta$, we get $\eta = (4(d+1))^{-1}  $  and we conclude to \eqref{eq:EstimL2poids}. 
\end{proof}

Gathering the estimates of Proposition~\ref{prop:TheLqEstim} and Lemma~\ref{lem:EstimL2poids}, we immediately obtain the following result.

\begin{prop}\label{prop:EstimBord}  
Let $q \in (0,1)$ and $m : \R^d \to (0,\infty)$ be a radially symmetric decreasing  weight function such that   $m^{\frac{q}{1-q}} |v| \in L^1(\R^d)$.
There exists  $C = C(q,m,\Omega)>0$  such that for any nonnegative solution $f$  to the KFP equation \eqref{eq:Kolmogorov}--\eqref{eq:KolmoBdyCond}--\eqref{eq:initialDatum} and any test function $0 \le \varphi \in \DD((0,T))$,  there holds  
$$
\int_\UU \frac{f^q }{ \delta^{\beta}}  \frac{m ^q  }{ \langle v \rangle^2}  \varphi^q \le 
 C \int_\UU  f^q m^q [|\partial_t \varphi^q| +  \langle \varpi_-\rangle \varphi^q],
$$
where $\beta  := (2(d+1))^{-1}  $ and   $\varpi := \varpi_{m^{-1},q}$ is defined in  \eqref{def:varpi}. 
\end{prop}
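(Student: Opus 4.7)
The plan is to apply Lemma~\ref{lem:EstimL2poids} pointwise in time to a suitable function built from $f$ and the twisted weight $\widetilde m$ of Proposition~\ref{prop:TheLqEstim}, and then to integrate against the time cut-off $\varphi^q$ in order to recognise the right-hand side of Lemma~\ref{lem:EstimL2poids} as exactly the two quantities controlled by Proposition~\ref{prop:TheLqEstim}.

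More precisely, for a.e.\ $t \in (0,T)$ I would apply Lemma~\ref{lem:EstimL2poids} to
$$
g(x,v) := \frac{f(t,x,v)^{q/2}\, \widetilde m(v)^{q/2}}{\langle v \rangle},
$$
so that $g\,\langle v \rangle = f^{q/2}\widetilde m^{q/2}$ and the inequality becomes
$$
\int_{\OO} \frac{f^q\, \widetilde m^q}{\delta^{\beta}\,\langle v \rangle^{2}} \,\dx\dv
\;\lesssim\;
\int_{\OO} f^{q}\, \widetilde m^{q}\,\frac{(n_x \cdot \hat v)^{2}}{\delta^{1/2}} \,\dx\dv
+
\int_{\OO} \bigl|\nabla_v (f^{q/2}\widetilde m^{q/2})\bigr|^{2} \,\dx\dv.
$$
Multiplying this inequality by $\varphi(t)^q \ge 0$ and integrating over $t \in (0,T)$ yields
$$
\int_{\UU} \frac{f^q\, \widetilde m^q}{\delta^{\beta}\,\langle v \rangle^{2}}\,\varphi^{q}
\;\lesssim\;
\int_{\UU} f^{q}\, \widetilde m^{q}\,\frac{(n_x \cdot \hat v)^{2}}{\delta^{1/2}}\,\varphi^{q}
+ \int_{\UU} \bigl|\nabla_v (f^{q/2}\widetilde m^{q/2})\bigr|^{2}\,\varphi^{q},
$$
which is precisely the left-hand side of the estimate in Proposition~\ref{prop:TheLqEstim}. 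I would then invoke Proposition~\ref{prop:TheLqEstim} to bound the right-hand side above by
$$
C \int_{\UU} f^{q}\, m^{q}\, \bigl[ |\partial_t \varphi^{q}| + \langle \varpi_-\rangle\,\varphi^{q} \bigr].
$$
Finally, using the comparison $m \lesssim \widetilde m$ on the left and $\widetilde m \lesssim m$ implicit in the definition of $\widetilde m$, I may replace $\widetilde m$ by $m$ in the left-hand side (up to a constant), which gives the announced inequality.

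There is essentially no serious obstacle: the only technical points are (i) checking that the function $g$ above has enough regularity to justify applying Lemma~\ref{lem:EstimL2poids} for a.e.\ $t$ — which follows because the right-hand side of Proposition~\ref{prop:TheLqEstim} is finite for the (smoothed) solutions we are manipulating, so in particular $\nabla_v(f^{q/2}\widetilde m^{q/2}) \in L^2(\UU;\varphi^q)$ and $f^{q/2}\widetilde m^{q/2}\langle v \rangle^{-1}$ is of finite Sobolev norm in $v$ for a.e.\ $t$; and (ii) keeping track of the fact that the weight $\widetilde m$ from Proposition~\ref{prop:TheLqEstim} and the weight $m$ appearing in the statement of Proposition~\ref{prop:EstimBord} are equivalent, so the constants and the penalisation term $\langle \varpi_-\rangle$ can be expressed in terms of $m$ alone up to harmless multiplicative constants absorbed in $C$.
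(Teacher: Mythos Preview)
Your proposal is correct and follows exactly the approach of the paper, which states that the result is obtained by ``gathering the estimates of Proposition~\ref{prop:TheLqEstim} and Lemma~\ref{lem:EstimL2poids}'': apply Lemma~\ref{lem:EstimL2poids} with $g = f^{q/2}\widetilde m^{q/2}/\langle v\rangle$, integrate against $\varphi^q$ in time, and use Proposition~\ref{prop:TheLqEstim} and the equivalence $m\lesssim\widetilde m\lesssim m$. One cosmetic point: the modified weight $\widetilde m$ from Proposition~\ref{prop:TheLqEstim} actually depends on $(x,v)$, not on $v$ alone as your notation $\widetilde m(v)$ suggests, but this is harmless since Lemma~\ref{lem:EstimL2poids} applies to arbitrary functions $g:\OO\to\R$.
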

 
By particularizing the choice of $m$, we obtain a first boundary penalizing weighted $L^1-L^q$ estimate which will be convenient for our purpose in the next steps.

\begin{prop}\label{prop:EstimBordL1Lq} 
For any  $q \in ((d+1)/(d+2),1)$, for any nonnegative solution $f$  to the KFP equation \eqref{eq:Kolmogorov}--\eqref{eq:KolmoBdyCond}--\eqref{eq:initialDatum} and any test function $0 \le \varphi \in \DD((0,T))$,  there holds  
$$
\int_\UU \frac{f^q }{ \delta^{\beta}}  \frac{ \varphi^q }{ \langle v \rangle^{2 + (d+2)q(1-q)} }  \le 
 C T^{1-q} \| \varphi^q \|_{W^{1,\infty}(0,T)} \| f \|_{L^1(\UU)}^q , 
$$
with  $C = C(q,d,\Omega)>0$ and $\beta = (2(d+1))^{-1}$ defined just above. 
\end{prop}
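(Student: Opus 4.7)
The plan is to apply Proposition~\ref{prop:EstimBord} with the polynomial weight
$$
m(v) := \langle v \rangle^{-(d+2)(1-q)},
$$
and then reduce the resulting right-hand side to a power of $\|f\|_{L^1(\UU)}$ by H\"older's inequality. The exponent $(d+2)(1-q)$ is calibrated so that
$$
\frac{m(v)^q}{\langle v \rangle^2} = \langle v \rangle^{-2 - (d+2)q(1-q)},
$$
matching precisely the target weight appearing on the left-hand side of the claim.

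I first check the admissibility of $m$. Since $q \in (0,1)$ we have $(d+2)(1-q) > 0$, so $m$ is radially symmetric and decreasing. The integrability hypothesis $m^{q/(1-q)} |v| \in L^1(\R^d)$ becomes $\langle v \rangle^{-(d+2)q} |v| \in L^1(\R^d)$, which holds if and only if $(d+2)q - 1 > d$, i.e.\ $q > (d+1)/(d+2)$. This is exactly the lower bound on $q$ assumed in the statement, and it is the sole constraint forcing that threshold.

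Applying Proposition~\ref{prop:EstimBord}, the task reduces to bounding
$$
\int_\UU f^q\, m^q \bigl[|\partial_t \varphi^q| + \langle \varpi_-\rangle\,\varphi^q\bigr]
$$
by $C T^{1-q} \|\varphi^q\|_{W^{1,\infty}} \|f\|_{L^1(\UU)}^q$. I apply H\"older's inequality with conjugate exponents $1/q$ and $1/(1-q)$ to split off the factor $f^q$, giving the bound
$$
\|f\|_{L^1(\UU)}^q \left(\int_\UU m^{q/(1-q)} \bigl[|\partial_t \varphi^q| + \langle \varpi_-\rangle\,\varphi^q\bigr]^{1/(1-q)}\right)^{1-q}.
$$
The bracketed quantity is pointwise bounded by $(1 + \|\langle\varpi_-\rangle\|_{L^\infty(\R^d)})\|\varphi^q\|_{W^{1,\infty}(0,T)}$; the sup-norm of $\langle\varpi_-\rangle$ is finite because for the polynomial weight $\omega = m^{-1} = \langle v\rangle^{(d+2)(1-q)}$, a direct computation from \eqref{def:varpi} shows $\varpi_{\omega,q}$ tends to the finite limit $(1-1/q)d - (d+2)(1-q)$ as $|v|\to\infty$, with only polynomial corrections. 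The remaining velocity integral $\int_{\R^d} \langle v \rangle^{-(d+2)q}\,dv$ is finite since $(d+2)q > d$, the spatial integral contributes $|\Omega| < \infty$, and the time integral produces a factor $T$. Raising to the power $1-q$ converts this factor $T$ into $T^{1-q}$ and the factor $\|\varphi^q\|_{W^{1,\infty}}^{1/(1-q)}$ back into $\|\varphi^q\|_{W^{1,\infty}}$, yielding the claimed estimate.

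There is no substantive obstacle: the argument is a clean H\"older-and-integrability reduction once the correct polynomial exponent for $m$ is identified. The only delicate calibration point is that the three requirements — matching of the target weight on the left, the integrability hypothesis $m^{q/(1-q)}|v|\in L^1$ of Proposition~\ref{prop:EstimBord}, and finiteness of $\int_{\R^d}m^{q/(1-q)}dv$ — all conspire, with the choice $m=\langle v\rangle^{-(d+2)(1-q)}$, to produce exactly the threshold $q > (d+1)/(d+2)$.
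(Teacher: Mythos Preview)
Your proof is correct and follows essentially the same route as the paper: the same polynomial choice $m = \langle v\rangle^{-(d+2)(1-q)}$, the same verification that $m^{q/(1-q)}|v|\in L^1$ forces exactly $q>(d+1)/(d+2)$, the same boundedness of $\varpi$ for polynomial weights, and the same H\"older reduction producing the factor $T^{1-q}$. The only cosmetic difference is that the paper first extracts $\|\varphi^q\|_{W^{1,\infty}}$ (using $\varpi\in L^\infty$) and then applies H\"older to $\int_\UU f^q m^q$, whereas you apply H\"older with the bracket still inside; both orderings give the same bound.
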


\begin{proof}[Proof of Proposition~\ref{prop:EstimBordL1Lq}] 
We choose   $m := \langle v \rangle^{-(d+2) (1-q)}$ and we observe that $m^{\frac{q }{ 1-q}} \langle v \rangle \in L^1$ and 
 $\varpi_{m^{-1},q} \in L^\infty$. From Proposition~\ref{prop:EstimBord},  we thus get
$$
\int_\UU \frac{f^q }{ \delta^{\beta}}   \frac{ \varphi^q }{ \langle v \rangle^{2 + (d+2)q(1-q)} } \le 
 C \| \varphi^q \|_{W^{1,\infty}(0,T)}  \int_\UU  f^q  \langle v \rangle^{-(d+2) (1-q)q}. 
$$
On the other hand, using the H\"older inequality, we have 
$$
 \int_\UU  f^q  \langle v \rangle^{-(d+2) (1-q)q} \le  \left(\int_\UU f \right)^q \left(T |\Omega| \right)^{1-q} \left( \int_{\R^d} \langle v \rangle^{-(d+2) q} \right)^{1-q},
 $$
and the last integral if finite because $(d+2)q > d$. We conclude by just gathering the two estimates. 
\end{proof}

%%%%%%%%%%%%%%%%%%%%%%%%%%%%%%%%%%%%%%%%%%%%%%%%%%%%%%%%%%%%%%%%%%%%%%%%%%%%%%%%%%%%
 \bigskip 
\subsection{A weak weighted $L^1-L^p$ estimate. } % (the hypocoercivity argument)}

Taking advantage of a known $L^1-L^p$ estimate available for the KFP equation set in the whole space and thus in the interior of the domain, we deduce a downgrade weighted $L^1-L^p$ estimate.
 We define 
\beqn\label{def:frakW3}
\mathfrak W_3 := \left\{ \omega : \R^d \to (0,\infty)\, ; \, \omega_0 := \omega  /\langle v \rangle \in \mathfrak W, \   |\nabla \omega_0| \omega_0^{-1} \langle v \rangle^{-1} \in L^\infty(\R^d)  \right\}.
\eeqn
We may notice that $\omega := \langle v \rangle^k e^{\zeta \langle v \rangle^s} \in \mathfrak W_3 $
under the condition \eqref{eq:condition-example-in-W}.

\begin{prop}\label{prop:EstimLploc}
Assume that $p \in (1,1+1/(2d))$,  $\alpha > p$ and $\omega \in \mathfrak W_3$.
There exists %$p > 1$ such that for any $\alpha > 1/p$ and for 
some constant $C = C(\Omega,p,\alpha,\omega) \in (0,\infty)$ such that any solution $f$ to the KFP equation \eqref{eq:Kolmogorov}--\eqref{eq:KolmoBdyCond}--\eqref{eq:initialDatum} satisfies 
\beqn\label{eq:EstimLploc}
 \left\| f \varphi  \frac{\omega}{\langle v \rangle}  \delta^{\alpha/p} \right\|_{L^p(\UU)} 
\le C T^{1/p + 2d (1-1/p)}  \|  \varphi \|_{W^{1,\infty}(0,T)} \| f \omega \|_{L^1(\UU)},
\eeqn
for any $0 \le \varphi \in \DD((0,T))$ and any $T>0$.
\end{prop}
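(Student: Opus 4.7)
The plan is to reduce the problem on the bounded domain to an inhomogeneous Cauchy problem for the KFP equation on the whole space $\R^{2d}$, where the ultracontractive $L^1\to L^p$ estimate is classical, stemming from the explicit Kolmogorov kernel representation (cf.\ \cite{MR1503147,MR2068847,MR3923847}). Concretely, I would set $\omega_0:=\omega/\langle v\rangle$ and consider the space-time multiplier
$$ \chi(t,x,v):=\varphi(t)\,\omega_0(v)\,\delta(x)^{\alpha/p}, \qquad h:=f\chi. $$
The condition $\alpha/p>0$ forces $h$ to vanish on $(0,T)\times\partial\Omega\times\R^d$, while $\varphi\in\DD((0,T))$ forces it to vanish at $t\in\{0,T\}$. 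Its extension by zero to $(0,T)\times\R^{2d}$ is then admissible, and a direct product-rule computation based on $\LLL f=0$ shows that this extension is a distributional solution, with zero initial datum, of the inhomogeneous whole-space KFP equation
$$ (\partial_t+v\cdot\nabla_x-\Delta_v-\Div_v(v\,\cdot\,))h \;=\; G_0 + \Div_v G_1, $$
with $G_0 := f(-\LLL^*\chi+2\Delta_v\chi)$ and $G_1 := -2f\nabla_v\chi$.

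Next I would apply Duhamel, $h_t=\int_0^t U_{t-s}(G_{0,s}+\Div_v G_{1,s})\,ds$ where $U_t$ denotes the whole-space KFP semigroup, together with the Kolmogorov kernel bounds
$$ \|U_t g\|_{L^p(\R^{2d})}\le C t^{-2d(1-1/p)}\|g\|_{L^1},\qquad \|U_t\Div_v g\|_{L^p(\R^{2d})}\le C t^{-2d(1-1/p)-1/2}\|g\|_{L^1}. $$
The assumption $p<1+1/(2d)$ is precisely what makes the first singular profile $t\mapsto t^{-2d(1-1/p)}$ belong to $L^p((0,T))$, and Young's convolution inequality in time then yields
$$\|h\|_{L^p(\UU)}\le CT^{\gamma}\bigl(\|G_0\|_{L^1(\UU)}+\|\langle v\rangle G_1\|_{L^1(\UU)}\bigr)$$
with the announced exponent $\gamma=1/p+2d(1-1/p)$. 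The argument is closed by the pointwise weight bound $|G_0|+\langle v\rangle|G_1|\le C|f|\,\omega\,(|\varphi|+|\varphi'|)$, which follows from $\omega\in\mathfrak W_3$ (controlling $\varpi_{\omega_0,1}$ and $|\nabla_v\omega_0|\omega_0^{-1}\langle v\rangle^{-1}$) together with $\alpha>p$ (which tames the singular factors $\delta^{\alpha/p-1}|\nabla_x\delta|$ and $\delta^{\alpha/p-2}|\nabla_x\delta|^2$ picked up by $v\cdot\nabla_x\chi$ and by the transport of $\chi$ along $v$).

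The main technical obstacle is the divergence-form source $\Div_v G_1$ coming from the cross term $-2\nabla_v f\cdot\nabla_v\chi$, whose associated kernel $t^{-2d(1-1/p)-1/2}$ is too singular to be directly integrable in the full allowed range $p<1+1/(2d)$. This is precisely why the target norm in the statement carries the extra factor $\langle v\rangle^{-1}$: one integrates by parts once against the $v$-gradient of the Kolmogorov kernel and exploits that $|\nabla_v\chi|\le C\omega$ (without an extra $\langle v\rangle$), so the missing half-derivative is compensated by the $\langle v\rangle^{-1}$ built into the target weight. Verifying all the pointwise estimates on $G_0, G_1$ only from the properties encoded in $\mathfrak W_3$ and $\alpha>p$ is mechanical but lengthy, and tracking constants through the convolution produces the stated power of $T$.
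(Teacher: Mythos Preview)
Your strategy of localising by a multiplier and reducing to the whole-space kernel is exactly right, but the treatment of the cross term $-2\nabla_v f\cdot\nabla_v\chi$ is where the proposal breaks down. Writing it as $\Div_v G_1$ forces you through the bound $\|U_t\Div_v g\|_{L^p}\lesssim t^{-2d(1-1/p)-1/2}\|g\|_{L^1}$, and this profile lies in $L^p((0,T))$ only when $2d(p-1)+p/2<1$, i.e.\ $p<(4d+2)/(4d+1)$, which is strictly smaller than the claimed range $p<1+1/(2d)$. Your proposed compensation by the $\langle v\rangle^{-1}$ in the target weight cannot repair this: a static velocity weight has no bearing on the short-time integrability of the kernel. (In fact, the $\langle v\rangle^{-1}$ in the statement is there for a different reason --- to absorb the factor $|v|$ produced by the transport term $v\cdot\nabla_x\chi$ in the source, so that $\|G_0\|_{L^1}\lesssim\|f\omega\|_{L^1}$.) Incidentally, no term of the form $\delta^{\alpha/p-2}|\nabla_x\delta|^2$ arises, since the operator contains no second-order $x$-derivatives.

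The paper avoids the divergence-form source altogether by a simple algebraic rearrangement: with $\bar f:=f\varphi\chi\omega_0$ one moves the cross term to the left and writes
\[
\partial_t\bar f+v\cdot\nabla_x\bar f-\Delta_v\bar f-\Bigl(v+2\,\frac{\nabla_v\omega_0}{\omega_0}\Bigr)\cdot\nabla_v\bar f=F,
\]
so that the cross term becomes a first-order drift in the operator rather than a source. The hypothesis $|\nabla_v\omega_0|\,\omega_0^{-1}\langle v\rangle^{-1}\in L^\infty$ in $\mathfrak W_3$ is precisely what guarantees this drift satisfies $|b|\lesssim\langle v\rangle$, whence the Gaussian upper bound on the fundamental solution from \cite{francesca2021fundamental} applies and yields $\bar f\le\int_0^t K_{t-s}\star F_{+,s}\,ds$ with the same kernel $K$ as in the unperturbed case. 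From there Young's inequality gives the $L^1\to L^p$ bound for the full range $p<1+1/(2d)$ with no extra half-power of $t$. A second, more cosmetic, difference: the paper takes $\chi\in\DD(\Omega)$ smooth and compactly supported, obtains an interior estimate with $\|\chi\|_{W^{1,\infty}}$ on the right, and only then recovers the weight $\delta^{\alpha/p}$ by a dyadic decomposition $\chi=\chi_k$ on $\{\delta>2^{-k}\}$ (this is where $\alpha>p$ enters, to make the geometric series $\sum_k 2^{k(p-\alpha)}$ converge). Your direct multiplication by $\delta^{\alpha/p}$ is a reasonable shortcut for the zeroth-order source, but the essential fix is the drift reformulation of the cross term.
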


\begin{proof}[Proof of Proposition~\ref{prop:EstimLploc}] %{\sl Step 1.} 
For $\chi \in \DD(\Omega)$ such that $0 \le \chi \le 1$, we define $0 \le \bar f := f \varphi \chi \omega_0$, which is a solution to the equation  
$$
\partial_t \bar f + v \cdot \nabla_x \bar f -  \Delta_v \bar f - (v + 2 \frac{\nabla_v  \omega_0 }{ \omega_0}) \cdot \nabla_v \bar f = F
$$
set on $(0,T) \times \R^d \times \R^d$, 
with 
$$
F :=    f \omega_0 (\varphi' \chi   + \varphi v \cdot \nabla_x\chi ) 
+ \bar f \Bigl( d - v \cdot \frac{\nabla_v \omega_0 }{ \omega_0} +   2 \frac{|\nabla_v \omega_0|^2 }{ \omega_0^2} - \frac{\Delta_v \omega_0 }{ \omega_0}  \Bigr) .
$$
Because $\omega_0 \in \mathfrak W$, we have 
$$
F_+ \le f \omega_0  \langle v \rangle (|\varphi'|\chi   + \varphi |\nabla_x\chi| ) + f  \varphi \chi \omega_0   \kappa_{\omega_0}.
$$
From  \cite[Theorem~1.5]{francesca2021fundamental} for instance and because $ |\nabla \omega_0| \omega_0^{-1} \lesssim \langle v \rangle$, we know that 
$$\ 
\bar f % = %K_t * f_0 +
% \int_0^t K_{t-s}  * F_{s} ds % =: \bar f^1 + \bar f^2, 
\le  \int_0^t K_{t-s}  \star F_{+s} \, ds,  % =: \bar f^1 + \bar f^2, 
$$
 where $\star=\star_{x,v}$ stands for a convenient convolution operation and $K_\tau$ is the Kolmogorov kernel defined by
 $$
K_\tau(x,v) := \frac{C_1 }{ \tau^{2d}} \exp \left( - \frac{3C_2 }{ \tau^3} |x- \frac{\tau}2v|^2 - \frac{C_2 }{ 4\tau}|v|^2 \right), \quad C_i > 0.
$$
We next compute  
\bean
\| \bar f \|_{L^p([0,T] \times \R^{2d})}^p 
&\le& \int_0^T \bigl\| \int_0^t K_{t-s} \star F_{+s} \bigr\|_{L^p(\R^{2d})}^p dt
\\
&\le& \|  K    \|_{L^p([0,T] \times \R^{2d})}^p  \|  F_+ \|_{L^1([0,T] \times \R^{2d})}^p , 
\eean
and because $1 \le p < 1+1/(2d)$, we find
$$
\|  K    \|_{L^p([0,T] \times \R^{2d})}^p = C_{K,p}  T^{1 - 2d (p-1)}.
$$
As a consequence, we have 
\beqn\label{eq:EstimStep1-prop:EstimLploc}
 \| f \varphi \omega_0 \chi \|_{L^{p}(\UU)}  
\lesssim  C_T  \| \varphi \|_{W^{1,\infty}}  \| \chi \|_{W^{1,\infty}}   \|  f  \omega   \|_{L^1(\UU)},
\eeqn
with $C_T:=T^{1/p + 2d (1-1/p)}$.

\smallskip\noindent
{\sl Step 2.} 
We define $\Omega_k := \{ x \in \Omega \mid \delta(x) > 2^{-k} \}$ and we choose $\chi_k \in \DD(\Omega)$ such that ${\mathbf 1}_{\Omega_{k+1}} \le \chi_k \le {\mathbf 1}_{\Omega_k}$ and  $2^{-k}\| \chi_k \|_{W^{1,\infty}} \lesssim 1$ uniformly in $k \ge 1$. 
We also denote $\UU_k := (0,T) \times \Omega_k \times \R^d$.
We deduce from \eqref{eq:EstimStep1-prop:EstimLploc} that 
$$
\| f \varphi  \omega_0 \|_{L^p(\UU_{k+1})}  \lesssim  2^k C_T \| \varphi   \|_{W^{1,\infty}(0,T)}  \| f \omega \|_{L^1(\UU)}, \quad \forall \, k \ge 1.
$$
Summing up, we obtain 
\bean
\int_\UU \delta^{\alpha}  (\varphi f\omega_0) ^{p} 
&=& \sum_k \int_{\UU_{k+1} \backslash \UU_k} \delta^{\alpha}  (\varphi  f \omega_0)^p
\\
&\lesssim& \sum_k  2^{-k   \alpha} \int_{\UU_{k+1}}  (\varphi f\omega_0)^p
\\
&\lesssim&  \sum_k 2^{ k(p-  \alpha)} C_T^p \| \varphi   \|_{W^{1,\infty}} ^p \|   f\omega \|_{L^1(\UU)}^p
\\
&\lesssim& C_T^p \| \varphi   \|_{W^{1,\infty}(0,T)}^p \| f \omega  \|_{L^1(\UU)}^p, 
\eean
because $\alpha > p$, what is nothing but \eqref{eq:EstimLploc}.
\end{proof}

\subsection{The $L^1-L^r$ estimate up to the boundary}

We start with a classical interpolation result.

\begin{lem}\label{lem:interpolation}
For any weight functions $\sigma_i : \UU \to (0,\infty)$ and any exponents $0 < r_0 < r_1 < \infty$, % {\Red $\alpha, \beta > 0$,?} 
$0 < \theta < 1$,  there holds
$$
\| g \|_{L^r_\sigma} \le \| g  \|_{L^{r_0}_{\sigma_0}}^{1-\theta} \| g\|_{L^{r_1}_{\sigma_1}}^\theta, 
$$
with $1/r := (1-\theta)/r_0 + \theta/r_1$ and $\sigma := \sigma_0^{1-\theta} \sigma_1^\theta$.  
\end{lem}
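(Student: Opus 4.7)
The plan is to reduce this to the standard three-line Hölder interpolation applied pointwise to $|g|^r \sigma^r$. Write
\[
|g \sigma|^r = |g|^r \, \sigma_0^{r(1-\theta)} \, \sigma_1^{r \theta} = \bigl(|g| \sigma_0\bigr)^{r(1-\theta)} \, \bigl(|g| \sigma_1\bigr)^{r\theta},
\]
so the proof reduces to bounding the integral of a product of two positive functions.

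Next I would apply Hölder's inequality on $\UU$ with conjugate exponents $p := r_0/[r(1-\theta)]$ and $q := r_1/[r\theta]$. The defining relation $1/r = (1-\theta)/r_0 + \theta/r_1$ gives exactly $1/p + 1/q = 1$, and by assumption $p, q \in (1,\infty)$ since $0 < r_0 < r_1$ and $\theta \in (0,1)$. This yields
\[
\int_\UU |g \sigma|^r \le \left( \int_\UU |g \sigma_0|^{r_0} \right)^{r(1-\theta)/r_0} \left( \int_\UU |g \sigma_1|^{r_1} \right)^{r\theta/r_1}.
\]
Taking $r$-th roots gives the claimed inequality $\|g\|_{L^r_\sigma} \le \|g\|_{L^{r_0}_{\sigma_0}}^{1-\theta} \|g\|_{L^{r_1}_{\sigma_1}}^\theta$.

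There is really no obstacle here: the entire content of the lemma is the bookkeeping of the conjugate-exponent identity, which is forced by the definition of $r$. The only small point to mention is that the result extends to the limit case by monotone convergence if either $\|g\|_{L^{r_i}_{\sigma_i}}$ equals zero (in which case $g \equiv 0$ almost everywhere on the support of $\sigma_i$) or is infinite (in which case the right-hand side is $+\infty$ and the inequality is trivial), so no integrability hypothesis on $g$ needs to be assumed in the statement.
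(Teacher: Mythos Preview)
Your proof is correct and is essentially identical to the paper's own argument: the same factorization $|g\sigma|^r = (|g|\sigma_0)^{r(1-\theta)}(|g|\sigma_1)^{r\theta}$ followed by H\"older's inequality with the conjugate exponents determined by $ar(1-\theta)=r_0$ and $a'r\theta=r_1$. The paper's write-up has some notational slips (it writes $p,q$ where $r_1,r_0$ are meant), but the content is the same as yours.
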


We include the very classical proof because the statement is usually written assuming rather $1 \le  r_0 < r_1 < \infty$, but that last restriction is not needed.

\begin{proof}[Proof of Lemma~\ref{lem:interpolation}]
We write 
\bean
\Bigl(\int f^r \sigma^r \Bigr)^{1/r} 
&=&   \Bigl(\int (f \sigma_0)^{r(1-\theta)} (f \sigma_1)^{r\theta} \Bigr)^{1/r} 
\\
&\le&   \Bigl(\int (f \sigma_0)^{a'r(1-\theta)}  \Bigr)^{1/a'r}  \Bigl(\int   (f \sigma_1)^{ar\theta} \Bigr)^{1/ar} 
\eean
thanks to the H\"older inequality with  $\frac1a := \frac{\theta r}{r_1 } = 1 - (1 - \theta)   \frac{r}{r_0} < 1$, from what we immediately deduce $r_1 = ar\theta$ and $r_0 = a'r(1-\theta)$, and thus conclude. 
%thanks to the H\"older inequality with $\Red a := \frac{r_1 }{ \theta r} = 1 +  \frac{1 - \theta }{ \theta} \frac{p }{ q} > 1$. We conclude by observing that $ar(1-\theta) = r_0$ and
%$a'r\theta = r_1$. 
\end{proof}

\medskip
We are now in position of stating our weighted $L^1-L^r$ estimate up to the boundary which is the well-known cornerstone step in the proof of DeGiorgi-Nash-Moser gain of integrability estimate.
 
\begin{prop}\label{prop:EstimL1Lr-omega} There exist an exponent $r > 1$ and some constants $\eta > 0$, $\theta,q \in (0,1)$ such that 
 any solution $f$ to the KFP equation \eqref{eq:Kolmogorov}-\eqref{eq:KolmoBdyCond} satisfies 
\beqn\label{eq:EstimL1Lr-omega}
 \| \varphi  f  \omega^\sharp \|_{L^r(\UU)} \le C T^\eta \| \varphi^q \|_{W^{1,\infty}(0,T)}^{1/q}  \| f\omega \|_{L^1(\UU)},
\eeqn
for any weight function $\omega \in  \mathfrak W_3$ and any test function $0 \le \varphi \in \DD((0,T))$, with $\omega^\sharp := \omega^\theta  \langle v \rangle^{-4}$ and $C = C(d,\Omega,\omega)$. 
A possible choice is $\theta = \theta_1 := (2d+3)^{-1}$. 
\end{prop}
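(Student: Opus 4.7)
The plan is to interpolate between the two weighted controls already at our disposal. On the one hand, Proposition~\ref{prop:EstimBordL1Lq} furnishes an $L^q$-type bound for some $q<1$ with a weight \emph{singular} at the boundary, of the form $\delta^{-\beta/q}$. On the other hand, Proposition~\ref{prop:EstimLploc} furnishes an $L^p$-type bound for some $p>1$ with a weight that \emph{vanishes} at the boundary, of the form $\delta^{\alpha/p}$. A suitable interpolation between these two via Lemma~\ref{lem:interpolation} will cancel the $\delta$-dependence altogether and produce a bound in $L^r$ with $r$ slightly larger than $1$.

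More precisely, I would fix $p\in(1,1+1/(2d))$ close to $1$, $\alpha>p$ slightly above $p$, and $q\in((d+1)/(d+2),1)$ close enough to $1$ (the closeness being dictated by the $r>1$ condition below). Denoting
$$
\sigma_0 := \delta^{-\beta/q}\langle v\rangle^{-(2+(d+2)q(1-q))/q},\qquad
\sigma_1 := \omega\,\langle v\rangle^{-1}\delta^{\alpha/p},
$$
Propositions~\ref{prop:EstimBordL1Lq} and~\ref{prop:EstimLploc} read $\|\varphi f\|_{L^q_{\sigma_0}}\lesssim T^{(1-q)/q}\|\varphi^q\|_{W^{1,\infty}}^{1/q}\|f\|_{L^1}$ and $\|\varphi f\|_{L^p_{\sigma_1}}\lesssim T^{1/p+2d(1-1/p)}\|\varphi\|_{W^{1,\infty}}\|f\omega\|_{L^1}$ respectively. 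I then apply Lemma~\ref{lem:interpolation} with $r_0=q$, $r_1=p$ and with the unique $\theta\in(0,1)$ that kills the $\delta$-factor in the interpolated weight $\sigma_0^{1-\theta}\sigma_1^\theta$, namely
$$
\theta := \frac{\beta p}{\beta p+q\alpha},\qquad (1-\theta)\frac{\beta}{q} = \theta\frac{\alpha}{p}.
$$
The resulting weight is then of the form $\omega^\theta\langle v\rangle^{-\gamma}$ with $\gamma=\theta+(1-\theta)(2+(d+2)q(1-q))/q$, a quantity which (for $p,q$ close to $1$) is of order $2$, so well below $4$; since $\langle v\rangle\ge 1$, we may harmlessly replace it by the stronger weight $\omega^\sharp=\omega^\theta\langle v\rangle^{-4}$.

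The critical point is to guarantee $r>1$. A short computation from $1/r=(1-\theta)/q+\theta/p$ shows that $r>1$ is equivalent to $\alpha(1-q)<\beta(p-1)$, which holds as soon as $1-q$ is small enough with respect to $\beta(p-1)/\alpha$, and is compatible with $q>(d+1)/(d+2)$ since $\beta>0$ is a fixed positive constant. The right-hand side produced by the interpolation reads, up to a constant and a power of $T$, $\|\varphi^q\|_{W^{1,\infty}}^{(1-\theta)/q}\|\varphi\|_{W^{1,\infty}}^\theta\|f\|_{L^1}^{1-\theta}\|f\omega\|_{L^1}^\theta$. I would then invoke the elementary inequality $\|\varphi\|_{W^{1,\infty}}\lesssim\|\varphi^q\|_{W^{1,\infty}}^{1/q}$, valid for smooth nonnegative $\varphi$ with $\|\varphi\|_\infty\le 1$ (which one may assume by homogeneity), to collapse the $\varphi$-factor into $\|\varphi^q\|_{W^{1,\infty}}^{1/q}$; together with the pointwise bound $\omega\gtrsim 1$, which follows from $\omega/\langle v\rangle\in\mathfrak W$ being radially nondecreasing, this collapses $\|f\|_{L^1}^{1-\theta}\|f\omega\|_{L^1}^\theta$ into $\|f\omega\|_{L^1}$.

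The main obstacle I anticipate is the parameter bookkeeping: one must simultaneously satisfy $p\in(1,1+1/(2d))$, $\alpha>p$, $q\in((d+1)/(d+2),1)$ and $\alpha(1-q)<\beta(p-1)$, and check that the $\langle v\rangle$-exponent coming out of interpolation is dominated by $4$. Once such a consistent choice of $(p,\alpha,q,\theta)$ is made, the proof reduces to a single application of Lemma~\ref{lem:interpolation} followed by the simplifications described above.
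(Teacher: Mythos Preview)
Your proposal is correct and follows essentially the same route as the paper: interpolate via Lemma~\ref{lem:interpolation} between the boundary-penalizing $L^q$ bound of Proposition~\ref{prop:EstimBordL1Lq} and the interior $L^p$ bound of Proposition~\ref{prop:EstimLploc}, choosing $\theta$ so that the $\delta$-powers cancel, and then take $q$ close enough to $1$ to force $r>1$. Your explicit verification that $r>1\iff\alpha(1-q)<\beta(p-1)$ and your handling of the $\varphi$-factor via $\|\varphi\|_{W^{1,\infty}}\lesssim\|\varphi^q\|_{W^{1,\infty}}^{1/q}$ (together with $\omega\gtrsim1$) are points the paper leaves implicit, so your write-up is in fact a bit more careful.
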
 

\begin{proof}[Proof of Proposition~\ref{prop:EstimL1Lr-omega}]
From Proposition~\ref{prop:EstimBordL1Lq}, we have 
$$
\bigl\| f \varphi  \frac{1 }{ \delta^{\beta/q}} \frac{1}{ \langle v \rangle^{2/q + (d+2)(1-q)} }  \Bigr\|_{L^q(\UU)} \le 
 C T^{1/q-1} \| \varphi^q \|_{W^{1,\infty}(0,T)}^{1/q} \| f \omega \|_{L^1(\UU)}, 
$$ 
for some exponent $q \in  ((d+1)/(d+2),1)$ and with $\beta  := (2(d+1))^{-1}$. 
Together with Proposition~\ref{prop:EstimLploc} and Lemma~\ref{lem:interpolation}, we deduce that 
$$
\| f \varphi \sigma \|_{L^r} \le  C T^\eta \| \varphi^q \|_{W^{1,\infty}}^{1/q} \| f \omega \|_{L^1(\UU)}, 
$$
for any $\theta \in (0,1)$ with 
$$
\frac{1 }{ r} = \frac{1-\theta }{ q} + \frac{\theta }{ p}, \quad
\sigma := \frac{\delta^{\alpha \theta/p} }{ \delta^{(1-\theta)\beta/q}} \frac{\omega^\theta }{ \langle v \rangle^{\theta + (2/q+(d+2)(1-q))(1-\theta)}},
$$
and
$$
\eta := (1-\theta) (1/q-1) + \theta (1/p + 2d (1-1/p)), 
$$
where we recall here that $p \in (1,1+1/(2d))$ and  $\alpha > p$ are arbitrary. We first choose 
$$
\theta   := \frac{\beta/q }{ \beta/q+\alpha/p} =  {1 \over 1 +2(d+1)q \alpha/p}
, 
$$
in such a way that $\delta^{\alpha \theta/p-(1-\theta)\beta/q} \equiv 1$. 
 In order to track the dependency in both the exponent and the weight function, we  choose $\alpha := p/q$, so that  $\theta = \theta_1 := (1+2(d+1))^{-1}$, and because 
 $r=r_q \to r_*$ as $q \to 1$ with
$$
\frac{1 }{ r_*} = 1-\theta_1 + \frac{\theta_1 }{ p} < 1,
$$
we may choose $q \in ((d+1)/(d+2),1)$ large enough in such a way that $r > 1$. We finally observe that $2/q+(d+2)(1-q) \le 4$ so that $\sigma \gtrsim \omega^\sharp$. 
\end{proof}

\subsection{The $L^1-L^p$ estimate on the dual problem } 

 We consider the dual  backward problem \eqref{eq:dualKFP} for which we establish the same kind of estimate as for the forward KFP  problem  \eqref{eq:Kolmogorov}-\eqref{eq:KolmoBdyCond}.
In all this section, we denote by $q \in (0,1)$ the exponent chosen during the proof of Proposition~\ref{prop:EstimL1Lr-omega} and we  define  
\bear\label{def:frakN1}
  \mathfrak N_1  :=  \Bigl\{ m := e^{-\zeta |v|^2 }, \ \zeta \in (0,1/2) \Bigr\}.
\eear

\begin{prop}\label{prop:EstimL1Lr-m} There exist some exponent $r > 1$ and some constants $\eta > 0$ such that for any weigh function $m \in \mathfrak N_1$ and  
 any solution $g$ to the  dual  backward problem \eqref{eq:dualKFP}, there holds 
\beqn\label{eq:EstimL1Lr-m}
 \| \varphi  g  m' \|_{L^{r}(\UU)} \lesssim T^{\eta_1} \| \varphi^q \|_{W^{1,\infty}(0,T)}^{1/q}  \| g m \|_{L^1(\UU)},
\eeqn
for  any test function $0 \le \varphi \in \DD((0,T))$ and some exponential weight function $m' := \exp (- \zeta' |v|^2)$, with $\zeta' \in (\zeta,1/2]$. 
\end{prop} 

We emphasize that the exponent $r > 1$ can be taken identically  as in Proposition~\ref{prop:EstimL1Lr-omega}, and for the sake of simplicity it is what we will do in the sequel. 

\begin{proof}[Proof of Proposition~\ref{prop:EstimL1Lr-m}] 
The proof follows the same steps as for the proof of Proposition~\ref{prop:EstimL1Lr-omega} and we thus repeat it without too much details. 
 
\smallskip\noindent
{\sl Step 1. Boundary penalizing $L^1-L^q$ estimate.} 
From \cite[Lemma~7.7]{sanchez:hal-04093201} or a direct computation, we have 
\beqn\label{eq:C*ffq-1}
\int (\CCC^* g ) \, g^{q-1} m^q = -  \frac{4 (q-1)}{ q^2} \int |\nabla_v (gm)^{q/2} |^2   + \int g^q m^q \wp_{m,q},
\eeqn
with  $\CCC^* $ defined in  \eqref{def:CCC*} and 
\beqn\label{eq:wp}
\wp_{m,q} := 2 \left(1 - \frac{1}{q} \right) \frac{|\nabla_v m|^2 }{ m^2}  + \left( \frac 2q -1 \right) \frac{ \Delta_v m }m  
 +\frac{d}{ q}   + v \cdot \frac{\nabla_v m }{ m} . 
\eeqn
Considering a solution $g$ to the  dual  backward problem \eqref{eq:dualKFP}  and $q \not=1$, 
we may write 
\beqn\label{eq:TT*gq} 
\TT^* \frac{g^q }{ q} = g^{q-1} \TT^* g = g^{q-1} \CCC^* g,
\eeqn
with $\TT^*$  defined in \eqref{def:TT*}.
We define the  modified weight function  $\frak M$ by 
 $$
\frak M := \MMM \left(1  - \frac1{4D^{1/2}}    \delta(x)^{1/2} n_x \cdot \widetilde v \right).
$$
Multiplying the equation \eqref{eq:TT*gq} by $\Phi^q := \varphi^q\frak M $ with $\varphi \in \DD(0,T)$, and integrating in all the variables, we obtain
\bean
- \frac1q\int_\Gamma (\gamma g)^q \Phi^q  n_x \cdot v + \frac1q \int_\UU g^q \TT \Phi^q
= \int_\UU g^{q-1} (\CCC^* g)   \Phi^q, 
\eean
with $\TT$ defined in \eqref{def:TT}. 
Together with \eqref{eq:C*ffq-1}, we thus deduce 
\beqn\label{eq:weakDualKolmogorovLq}
 4\frac{1-q }{ q^2}   \int_\UU |\nabla_v (g\Phi)^{q/2}|^2 + \frac1q\int_\Gamma (\gamma g)^q \Phi^q  n_x \cdot v
=   \frac1q \int_\UU g^q \TT \Phi^q - \int g^q \Phi^q \wp, 
\eeqn
with $\wp = \wp_{\frak M^{1/q}, q}$.
For the  boundary term at the LHS, we argue similarly as during the proof of Lemma~\ref{lem:B11:lem0}. 
We   observe that 
\bean
 \int_\Sigma (\gamma g)^q \MMM n_x \cdot v
\ge
\int_{\Sigma_+} \iota (\widetilde{ \gamma_- g \MMM})^q  \MMM (n_x \cdot v)_+ - \int_{\Sigma_-} \iota (\gamma_-g)^q \MMM  (n_x \cdot v)_-
\ge 0, 
\eean
where we have used the concavity of the function $G \mapsto G^q$ and we have removed the contribution of the specular reflection in the first inequality, and we have used H\"older's inequality 
$$
\int_{\R^d} \gamma_- g^q \MMM  (n_x \cdot v)_-
\le \bigl( \widetilde{ \gamma_- g \MMM} \bigr)^q \Bigl( \int_{\R^d} \MMM  (n_x \cdot v)_-\Bigr)^{1-q} 
$$
and the normalization condition \eqref{eq:FPK-def_M} in the second inequality.  
 We may then proceed exactly as in the proof of Proposition~\ref{prop:TheLqEstim}, and we obtain  
$$
\int g^q \frak M \, \,  \frac{(n_x \cdot \hat v)^{2} }{  \delta^{1/2}} \varphi^q+  
\int  |\nabla_v(g^{q/2} \frak M^{1/2})|^2  \varphi^q
\le 
\frac{C_\Omega }{ 1-q}   \int g^q  \frak M [|\partial_t \varphi^q| +  \varphi^q \langle \wp_- \rangle].
$$
%for any function $0 \le \varphi \in \DD((0,T))$. 
As in Proposition~\ref{prop:EstimBord} and with the help of the interpolation  Lemma~\ref{lem:EstimL2poids}, we deduce 
\beqn\label{eq:Estim-g1}
\int \frac{g^q }{ \delta^\beta}  \frac{ \frak M   }{ \langle v \rangle^2}  \varphi^q 
\le 
\frac{C_\Omega }{ 1-q}   \int g^q \frak M [|\partial_t \varphi^q| +  \varphi^q \langle \wp_- \rangle], 
\eeqn
 for the same $\beta  := (2(d+1))^{-1}  $. %for any function $0 \le \varphi \in \DD((0,T))$, for any $q \in (q^*,1)$ and for some fixed $\beta > 0$ and $q^* \in (0,1)$.
Finally, using that $ \langle \wp_- \rangle \lesssim \langle v \rangle^2$ and arguing similarly as in the proof of Proposition~\ref{prop:EstimBordL1Lq} with the help of a last Holder inequality for handling the RHS term, we get  
\beqn\label{eq:Estim-g1BIS}
\int \frac{g^q }{ \delta^\beta}  \frac{ \frak M }{ \langle v \rangle^2}   \varphi^q 
\le C T^{1-q}
 \| \varphi^q  \|_{W^{1,\infty}}\| g \frak M^{1/q}  \langle v \rangle^{2/q} \langle v \rangle^{(1-q)(d+1)/q} \|_{L^1(\UU)}^q, 
\eeqn
for some constant $C = C(q,\Omega) > 0$.

\smallskip\noindent
{\sl Step 2. Weak weighted $L^1-L^p$ estimate, $p>1$.} Consider  a solution $g$ of the dual problem \eqref{eq:dualKFP}, 
$0 \le \varphi \in \DD((0,T))$, $0 \le \chi \in \DD(\Omega)$ and a weight function $m \in \frak N_1$, so that $m_0 := m \langle v \rangle^{-2}$ satisfies 
$$
  \frac{|\nabla m_0|^2 }{ m_0^2}  +  \frac{|\Delta m_0| }{ m_0}   \lesssim \langle v \rangle^2 .
$$
We set  $\bar g := g \varphi \chi m_0$ and we easily compute 
$$
- \partial_t \bar g - v \cdot \nabla_x \bar g - \Delta_v \bar g  + (2 \frac{\nabla_v m_0 }{ m_0} - v ) \cdot \nabla_v \bar g = G, 
$$
with 
$$
G : =\bar g \Bigl[ 2 \frac{|\nabla_v m_0|^2 }{ m_0^2} - \frac{\Delta_v m_0 }{ m_0} + v \cdot \frac{\nabla_v m_0 }{ m_0} \Bigr] - gm_0 (\partial_t +v \cdot \nabla_x)(\varphi \chi). 
$$
Proceeding as in the proof of Proposition~\ref{prop:EstimLploc}, we get first 
\bean
\| \bar g \|_{L^{p}(\R^{2d+1})}  
\le C T^{\eta_2}  \| \varphi \|_{W^{1,\infty}}  \| \chi \|_{W^{1,\infty}}    \|  g m    \|_{L^{1}(\UU)} , 
\eean
for any $p \in (1,1+1/(2d))$ and with $\eta_2 := 1/p + 2d (1-1/p)$.
By interpolation, we then conclude 
\beqn\label{eq:Estim-g2}
 \|  g \varphi    \frac{m }{ \langle v \rangle^2} \delta^{\alpha/p} \|_{L^p(\UU)} 
\le C T^{\eta_2}   \|  \varphi \|_{W^{1,\infty}} \| g m \|_{L^1(\UU)},
\eeqn
for any $\alpha > p$ and some constant $C = C(\alpha, \Omega, m) > 0$. %and with $\eta_2 := $.

\smallskip\noindent
{\sl Step 3. Weighted $L^1-L^r$ estimate, $r > 1$.} 
We consider again a weight function $m \in \frak N_1$, %such that $m \ge \MMM^{1/q_0}$ 
and we observe that  $m \ge \frak M^{1/q}  \langle v \rangle^{2/q} \langle v \rangle^{(1-q)(d+1)/q}$. % for $q \in (q_0,1)$. 
From Step~1, we thus find 
$$
\bigl\| \frac{ g }{ \delta^{\beta/q} } \frac{\MMM}{ \langle v \rangle^{2/q}} \varphi \bigr\|_{L^q (\UU)} %\lesssim \| g m \|_{L^1}
\le C T^{\eta_1}
 \| \varphi^q  \|_{W^{1,\infty}}^{1/q} \| g m \|_{L^1(\UU)}, 
$$
with $\eta_1 := 1/q-1$.  Since $m$ satisfies the requirement of Step~2, we may thus use the above estimate together with \eqref{eq:Estim-g2} and the interpolation Lemma~\ref{lem:interpolation} as during the proof of Proposition~\ref{prop:EstimBordL1Lq}. We get
%when $m^q|v| \in L^1$ and $(m\MMM^{-1})^{ \tfrac{q}{1-q} } |v| \in L^1$. 
$$
\| g \varphi \sigma \|_{L^r(\UU)} \le  C T^\eta \| \varphi^q \|_{W^{1,\infty}}^{1/q} \| gm \|_{L^1(\UU)}, 
$$
with 
$$
\frac{1 }{ r} = \frac{1-\theta_1 }{ q} + \frac{\theta_1 }{ p}, \quad
\sigma := \frac{ m^{\theta_1} \MMM^{1-\theta_1}}{ \langle v \rangle^{2\theta_1 +  (2/q)(1-\theta_1)}},
%\sigma := \frac{m }{ \langle v \rangle^{2\theta + (4+2/q)(1-\theta)}},
$$
and
$$
\eta := (1-\theta_1) (1/q-1) + \theta_1 (1/p + 2d (1-1/p)),
$$
where we have fixed 
$$
p \in (1,1+1/(2d)), \quad   \alpha :=  p/q, \quad 
\theta_1 :=(2d+3)^{-1}. 
$$ 
Because of the choice of $q$ (large enough), we have  $r > 1$. On the other hand, we clearly have $\sigma \gtrsim m' := \exp(-\zeta' |v|^2)$, for any
$\zeta' \in (\theta_1 \zeta + (1-\theta_1)/2,1/2)$, in particular $\zeta' > \zeta$.
\end{proof}

\subsection{Conclusion of the proof}
\label{subsec:conclusion}

 We now conclude the proof of  Theorem~\ref{th:DGNML1Linfty} in several elementary and classical (after Nash's work) steps.

\begin{proof}[Proof of  Theorem~\ref{th:DGNML1Linfty}]
We split the proof into
four steps. We denote by  $r >1$ the (same) exponent defined in Propositions~\ref{prop:EstimL1Lr-omega} and \ref{prop:EstimL1Lr-m}.
We define 
$$
\frak W_1 := \{ \exp (\zeta' |v|^2); \  \zeta' \in ((1-\theta_1)/2,1/2)  \} 
$$
and we fix $\omega \in   \frak W_1$.

 \medskip\noindent
 {\sl Step 1.} Take $\omega_1 := \omega$, so that %:= \exp(\zeta_1 |v|^2)$, $\zeta_1 \in (,)$, so that}  
 $\omega_1 \in \mathfrak W_0 \cap\mathfrak W_3 $ and $\omega_r := \omega^\sharp = \omega^{\theta} \langle v \rangle^{-4} \in \mathfrak W_0$, where 
 %$r >1$ and $\theta \in (0,1)$  are defined
 $\theta \in (0,1)$ is defined in the statement of Proposition~\ref{prop:EstimL1Lr-omega}. We  claim that there exist $\nu_1, \kappa_1 > 0$  such that 
\beqn\label{eq:proofTh1-step1}
 T^{\nu_1} \| S_\LLL(T) f_0 \|_{L^{r}_{\omega_r}(\OO)} \lesssim e^{\kappa_1 T} \| f_0 \|_{L^1_{\omega_1}(\OO)}, \quad \forall \, T > 0, \ \forall \, f_0 \in L^1_{\omega_1}(\OO).
\eeqn
 We set $f_t := S_\LLL(t) f_0$. 
 On the one hand, from  Proposition~\ref{prop:EstimLp} with $p=r$, we have 
 \bean
\frac{T}2 \| f_T \|_{L^r_{\omega_r}}^r 
&\lesssim&  \int_{T/2}^{T}   e^{r\kappa (T-t)} \| f_t \|_{L^r_{\omega_r} }^r dt
\\
&\lesssim& e^{r\kappa T}  \int_{0}^{T}    \| f_t \varphi_0(t/T) \|_{L^r_{\omega_r} }^r dt, 
\eean
with $\varphi_0 \in C^1_c((0,2))$,  ${\mathbf 1}_{[1/2,1]} \le \varphi_0 \le 1$,  $\varphi_0^q \in W^{1,\infty}$, $q \in ((d+1)/(d+2),1)$. On the other hand, thanks to Proposition~\ref{prop:EstimL1Lr-omega} applied with $\varphi(t) := \varphi_0(t/T)$ and next to
 Proposition~\ref{prop:EstimLp} with $p=1$, we deduce 
 \bean
\frac{T}2 \| f_T \|_{L^r_{\omega_r}}^r 
 &\lesssim& e^{r\kappa T} T^{r\eta} \bigl( 1 + \frac{1 }{ T} \bigr)^{r/q}   \Bigl( \int_0^{T}  \| f_t \|_{L^1_{\omega_1} } dt \Bigr)^r
\\
 &\lesssim& e^{r\kappa T} T^{r\eta} \bigl( 1 + \frac{1}{T} \bigr)^{r/q}   \Bigl( \int_0^{T}e^{\kappa  t}  dt \Bigr)^r  \| f_0 \|_{L^1_{\omega_1} }^r,
 \eean
from what \eqref{eq:proofTh1-step1} follows with $\nu_1 := 1/r - \eta - 1/q$ and any $\kappa_1 \ge 3\kappa$.
 
 \medskip\noindent{\sl Step 2.}   Take $m_1  := \exp(-\zeta |v|^2)$, $\zeta \in (0,1/2)$,  and $m_r:= \exp(-\zeta' |v|^2)$, 
$\zeta' \in (\theta_1 \zeta + (1-\theta_1)/2,1/2)$ as in the statement of Proposition~\ref{prop:EstimL1Lr-m}. 
We emphasize that $m_r^{-1} \in \frak M_{0}$ as defined in \eqref{def:frakW0} and $m_1 \in \frak N_{0}$ as defined in \eqref{def:frakM0}.
 %Take $m_1 :=m \in \mathfrak N_2 $ so that $m_r : = m^{3/2} \langle v \rangle^{-4} \in \mathfrak N_0$, where  $r >1$ is defined in the statement of Proposition~\ref{prop:EstimL1Lr-m}. 
 We now claim that there exist $\nu_2, \kappa_2 > 0$  %$\nu_2 > 0$ and $\kappa_2 \in \R$ 
 such that 
\beqn\label{eq:proofTh1-step2}
 T^{\nu_2} \| S^*_\LLL(T) g_0 \|_{L^r_{m_r}(\OO)} \lesssim e^{\kappa_2 T} \| g_0 \|_{L^1_{m_1}(\OO)}, \quad \forall \, T > 0, \ \forall \, g_0 \in L^1_{m_1}(\OO).
\eeqn
We repeat the argument presented in Step 1. We set $g_t := S^*_\LLL(t) g_0$. 
 On the one hand, from the dual counterpart of Proposition~\ref{prop:EstimLp} with $p=r'$ and next from Proposition~\ref{prop:EstimL1Lr-m}, we have 
 \bean
\frac{T}2 \| g_T \|_{L^r_{m_r}}^r 
&\lesssim& e^{r\kappa T}  \int_{0}^{T}    \| g_t \varphi_0(t/T) \|_{L^r_{m_r} }^r dt, 
\\
&\le& 
 e^{r\kappa T} T^{r\eta} \bigl( 1 + \frac{1}{T} \bigr)^{r/q}   \Bigl( \int_0^{T}  \| g_t \|_{L^1_{m_1} } dt \Bigr)^r,
\eean
where $\varphi_0$ is the same function as above. We conclude to \eqref{eq:proofTh1-step2} thanks to Lemma~\ref{lem:SLestimLinfty}.
% the dual counterpart of Proposition~\ref{prop:EstimLp} with $p=\infty$  (which is nothing but Lemma~\ref{lem:SLestimLinfty} with {\Blue reversed time}). 

 \medskip\noindent{\sl Step 3.}  Observing that for $\omega := \exp(\zeta' |v|^2)$,  $ \zeta' \in ((1-\theta_1)/2,1/2)$, there exists $\zeta \in (0,1/2)$ such that $\zeta' \in (\theta_1 \zeta + (1-\theta_1)/2,1/2)$,
 %Take $\omega$ a weight function such that $m_1 := \omega^{-1} \in \mathfrak N_2 $. 
 the dual counterpart of \eqref{eq:proofTh1-step2} writes 
 \beqn\label{eq:proofTh1-step3}
 T^{\nu_2} \| S_\LLL(T) f_0 \|_{L^\infty_{\omega_\infty}(\OO)} \lesssim e^{\kappa_2 T} \| f_0 \|_{L^s_{\omega}(\OO)}, \quad \forall \, T > 0, \ \forall \, f_0 \in L^s_{\omega}(\OO), 
\eeqn
with $\omega_\infty := m_1^{-1} = \exp(\zeta |v|^2)$ and $s := r' \in (1,\infty)$. % and $\omega_s := m^{-1}_r$. 
 Interpolating \eqref{eq:proofTh1-step1} and  \eqref{eq:proofTh1-step3}, for any $1 \le p < q \le \infty$, we obtain
 $$
\| S_\LLL(T) f_0 \|_{L^q_{\omega_q}} \le C_1 \frac{e^{C_2 T} }{ t^{\nu(1/p-1/q)}}   \| f_0  \|_{L^p_{\omega}},  \quad \forall \, T > 0, \ \forall \, f_0 \in L^p_{\omega}(\OO), 
$$
with $\nu := \max(\nu_1,\nu_2) (1-1/r)^{-1}$, $C_2 := \max(\kappa_1,\kappa_2)$,  and the appropriate interpolated weight function $\omega_q$,
in particular $\omega_q \ge \omega' := \exp(\zeta''|v|^2)$, with $\zeta'' := \min(\theta_1 \zeta',\zeta)$. % and $\omega_q$.
\end{proof}

%%%%%%%%%%%%%%%%%%%%%%%%%%%%%%%%%%%%%%%%%%%%%%%%%%%%%%%%%%%%%%%%%%%%%%%%%%%%%%
\section{Hypocoercivity: Proof of Theorem~\ref{theo:hypo}}
   \label{sec:proofTh2}
   
  We adapt the proof of \cite[Theorem~1.1]{MR4581432}. 
 We start introducing some notations and recalling some classical results about the Poisson equation.
 For any convenient function or distribution $\xi  : \Omega \to \R$, we define   $u := (-\Delta_x)^{-1} \xi  : \Omega \to \R$ as the associated  solution to the Poisson equation with Neumann condition. 
More precisely, for any $\eta_i \in L^2(\Omega)$, $\langle \eta_1 \rangle = 0$, we define $u \in H$, with $H := \{ u \in H^1(\Omega), \,\, \langle u \rangle = 0 \}$, as the solution of 
the variational problem
\beqn\label{eq:Poisson-VarFormulation}
\int_\Omega \nabla_x u \cdot \nabla_x w  = 
\int_\Omega  \{ w \eta_1 -    \nabla_x w \cdot  \eta_2\} , \quad \forall \, w \in H,
\eeqn
which is indeed a variational solution to the 
Poisson equation with Neumann condition
\beqn\label{eq:Poisson-PDEFormulation}
- \Delta_x u = \eta_1 + \Div_x  \eta_2  \ \ \hbox{in}\ \ \Omega, \quad  n_x \cdot (\nabla_x u - \eta_2) = 0   \ \ \hbox{on}\ \ \partial\Omega.
\eeqn
It is well-known that the above variational problem has a unique solution thanks to the Poincar\'e-Wirtinger inequality and the Lax-Milgram Theorem, that  
\beqn\label{eq:PoissonNH1}
\|u \|_{H^1(\Omega)} \lesssim \sum_{i=1}^2 \| \eta_i \|_{L^2(\Omega)}, 
\eeqn
%and that  the solution $u$ satisfies the additional regularity estimates 
holds true and that  the additional regularity estimates 
\beqn\label{eq:PoissonNH2}
\|u \|_{H^1(\partial\Omega)} \lesssim \| u \|_{H^2(\Omega)} \lesssim \| \eta_1 \|_{L^2(\Omega)}
\eeqn
holds when $\eta_2 = 0$. 
We define 
$$
\HH := L^2(\mu^{-1} dvdx), \quad 
\HH_0 := \{ f \in \HH; \ \la\!\la f \ra\!\ra = 0 \}, 
$$
where $\mu$ is defined in \eqref{eq:FPK-def_mu} and $ \la\!\la \cdot \ra\!\ra$ in \eqref{eq:intro_mass_cons}.
We next define the new (twisted) scalar product $(\!( \cdot, \cdot )\!)$ on~$\HH_0$ by 
$$%\begin{equation}\label{eq:innerproduct}
(\!( f , g )\!) 
 := ( f , g )_{\HH} 
+ \eps (\nabla_x (-\Delta_x)^{-1} \varrho_f, j_g   )_{L^2} + \eps (\nabla_x (-\Delta_x)^{-1} \varrho_g, j_f   )_{L^2},
%+ \eps \la -\nabla_x u_{\mathrm{N}}[\varrho[g]] ,  m[f]  \ra_{L^2_x(\Omega)}
$$%\end{equation}
with $ \eps > 0$ small enough to be fixed later, $L^2 := L^2_x(\Omega)$ and where the mass $\varrho_f$ and the momentum $j_f$ are defined respectively by
$$
\varrho_h(x) = \varrho[h](x) := \langle h \rangle, \quad j_h(x) =j[h](x) := \langle h v \rangle, \quad \langle H \rangle := \int_{\R^d} H (x,v) \, dv.
%\varrho_h(x) = \varrho[h](x) := \langle h \rangle, \int_{\R^d} h (x,v) \, dv, \quad j_h(x) =j[h](x) := \int_{\R^d} h (x,v) \, v \, dv.
$$
For any $f \in \HH_0 $, we next decompose 
\beqn\label{eq:fdecomposition}
f = \pi f + f^\perp,
%\varrho \mu  + f^\perp,
\eeqn
%$f = \pi f + f^\perp$ 
with the macroscopic part $\pi f$ given by
$$
\pi f (x,v) = \varrho_f(x) \mu(v),
$$
and we remark that 
\beqn\label{eq:fdecompositionNorm}
\| f \|_{\HH}^2 
= \| f^\perp \|_{\HH}^2 + 
\| \pi f \|_{\HH}^2, \quad 
\| \pi f \|_{\HH}^2 
= \| \varrho_f \|_{L^2}^2.
\eeqn
as well as 
\beqn\label{eq:fdecompositionNormEstim}
 \| \varrho_f \|_{L^2} \le \| f \|_\HH, \quad  \| j_f \|_{L^2} \lesssim \| f^\perp \|_\HH \le  \| f \|_\HH .
 \eeqn
It is worth emphasizing that 
 \bean
 | (\nabla_x (-\Delta_x)^{-1} \varrho_f, j_f   )_{L^2} | 
 &\le&  \| \nabla_x (-\Delta_x)^{-1} \varrho_f \|_{L^2} \| j_f \|_{L^2}
 \\
 &\lesssim&  \| \varrho_f \|_{L^2} \| f^\perp \|_{\HH} \lesssim  \| f \|_\HH^2, 
 \eean
 from  the Cauchy-Schwarz inequality, \eqref{eq:PoissonNH2} and \eqref{eq:fdecompositionNormEstim}. 
Denoting by $\Nt \cdot \Nt$ the norm associated to the scalar product $(\!( \cdot , \cdot )\!)$, we in particular deduce that 
\beqn\label{eq:equivNorm}
\| f \|_{\HH} \lesssim \Nt f \Nt \lesssim \| f \|_{\HH}, \quad \forall \, f \in \HH_0.
\eeqn
We finally define the Dirichlet form associated to the operator $\LLL$ defined in \eqref{def:LLL} for the twisted scalar product
$$
D[f] := (\!( - \LLL f, f ) \!), \quad f \in \HH_0.
$$
More explicitly, we have 
$$
D[f] = D_1[f] + D_2[f] + D_3[f],
$$
with 
$$
D_1[f] := (-\LLL f,f)_\HH, \quad D_2[f] := \eps (\nabla_x \Delta_x^{-1} \varrho_f, j[\LLL f])_{L^2}, 
\quad D_3[f] :=  \eps (\nabla_x \Delta_x^{-1} \varrho[\LLL f], j_f)_{L^2}, 
$$
and we estimate each term separately. 
%For further references, we recall the definitions 
%$$
%\TT f := - v \cdot \nabla_x f, \quad \CCC f := \Delta_v f + \Div_v(vf), 
%$$ 
%so that $\LLL = \TT + \CCC$.
For simplicity we introduce the notations %$f_{\pm} := \gamma_{\pm} f$, 
$\DDD^\perp := \mathrm{Id} - \DDD$, where we recall that $\DDD$ is given by 
\eqref{eq:FPK-def_D}  and $\partial \HH_+ := L^2(\Sigma_+ ; \mu^{-1}(v) n_x \cdot v dv d\sigma_{\! x})$. 
It is worth emphasizing that because $f \in \mathrm{Dom}(\LLL)$, the trace functions $\gamma_{\pm} f $ are well defined. We refer the interested reader to \cite{MR2721875,sanchez:hal-04093201,CM-Landau**} and the references therein for a suitable definition of the trace function for solutions to the KFP equation.

\medskip
We estimate the first term involved in the Dirichlet form $D$.

\begin{lem}\label{lem:micro}
For any $f \in \HH$, there holds  
$$
( - \LLL f , f )_{\HH} \ge   \| f^\perp \|_{\HH}^2 + \frac12 \| \sqrt{\iota(2-\iota)} \DDD^\perp \gamma_{+} f \|_{\partial \HH_+}^2.
$$   
\end{lem}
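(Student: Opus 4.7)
The plan is to split $(-\LLL f, f)_{\HH} = (-\CCC f, f)_{\HH} + (v\cdot\nabla_x f, f)_{\HH}$ and handle each piece separately: the collision contribution will produce the microscopic coercivity $\|f^\perp\|_{\HH}^2$ via the Gaussian Poincar\'e inequality, and the transport contribution, combined with the Maxwell reflection condition, will produce the boundary penalization $\tfrac12 \|\sqrt{\iota(2-\iota)}\,\DDD^\perp \gamma_+ f\|_{\partial \HH_+}^2$.

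For the collision part, I would integrate by parts in $v$ using the identity $\nabla_v f + v f = \mu \, \nabla_v(f/\mu)$ to obtain
\[
(-\CCC f, f)_{\HH} = \int_{\OO} \mu \,|\nabla_v(f/\mu)|^2 \,dx dv.
\]
At each fixed $x \in \Omega$, the sharp Gaussian Poincar\'e inequality applied to $h := f(x,\cdot)/\mu$ gives $\int \mu|\nabla_v h|^2 dv \ge \int \mu(h - \bar h)^2 dv$, where the mean against the probability measure $\mu\,dv$ is $\bar h(x) = \int h\,\mu\,dv = \langle f \rangle = \varrho_f(x)$. Rewriting $\int\mu (h-\varrho_f)^2 dv = \int \mu^{-1}(f - \varrho_f \mu)^2 dv$ and integrating in $x$ yields $(-\CCC f, f)_{\HH} \ge \|f^\perp\|_{\HH}^2$.

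For the transport part, Green–Ostrogradski gives, with $g := \gamma_+ f$,
\[
(v\cdot\nabla_x f, f)_{\HH} = \tfrac12 \int_{\Sigma_+} g^2\,\mu^{-1}(n_x \cdot v)\,dv d\sigma_x - \tfrac12 \int_{\Sigma_-} (\gamma_- f)^2\,\mu^{-1}|n_x \cdot v|\,dv d\sigma_x.
\]
I would substitute $\gamma_- f = (1-\iota)\SSS g + \iota \DDD g$ and expand the square. The $(\SSS g)^2$ contribution is reduced, by the change of variables $v \mapsto \VV_x v$, to the same integrand on $\Sigma_+^x$, so after subtraction its coefficient becomes $1 - (1-\iota)^2 = \iota(2-\iota)$. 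The cross and $(\DDD g)^2$ contributions are computed by means of the two elementary identities $\MMM(v)\,\mu^{-1}(v) \equiv \sqrt{2\pi}$ and $\int_{\Sigma_+^x} g\,(n_x \cdot v)\, dv = \widetilde g(x)$, each yielding $\sqrt{2\pi}\,\widetilde g(x)^2$, and their coefficients combine to $2\iota(1-\iota) + \iota^2 = \iota(2-\iota)$ as well. Thus the transport term reduces to
\[
\tfrac12 \int_{\partial\Omega} \iota(2-\iota) \left[\,\int_{\Sigma_+^x} g^2\, \mu^{-1}|n_x \cdot v|\,dv \,-\, \sqrt{2\pi}\,\widetilde g(x)^2\,\right] d\sigma_x,
\]
and a direct expansion of $|g - \MMM\widetilde g|^2$, using once more $\MMM\mu^{-1} = \sqrt{2\pi}$ and $\widetilde\MMM = 1$, identifies the bracket with $\int_{\Sigma_+^x} |\DDD^\perp g|^2 \,\mu^{-1}|n_x \cdot v|\,dv$, which is exactly the announced boundary contribution.

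The main delicacy will be the algebraic bookkeeping for the reflection condition: the three quadratic pieces arising from $((1-\iota)\SSS g + \iota \DDD g)^2$ must conspire to reproduce the Darroz\`es–Guiraud coefficient $\iota(2-\iota)$ in front of precisely the projection $\DDD^\perp g$, which I would orchestrate through the two normalization relations $\MMM = \sqrt{2\pi}\,\mu$ and $\widetilde \MMM = 1$. Non-negativity of the resulting bracket is then automatic from the Cauchy–Schwarz inequality $\sqrt{2\pi}\,\widetilde g(x)^2 \le \int_{\Sigma_+^x} g^2 \mu^{-1}|n_x \cdot v|\,dv$, so that summing the two contributions concludes the proof.
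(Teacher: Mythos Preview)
Your proposal is correct and follows exactly the same decomposition as the paper: split $(-\LLL f,f)_\HH$ into the collision and transport pieces, use the Gaussian Poincar\'e inequality for the first, and evaluate the boundary term coming from the second via the reflection condition. The paper in fact defers the boundary computation to \cite[Lemma~3.1]{MR4581432}, whereas you write it out explicitly; your bookkeeping (the $\MMM=\sqrt{2\pi}\,\mu$ normalization, the change of variables $v\mapsto\VV_x v$, and the identification of the bracket with $\|\DDD^\perp g\|^2_{\partial\HH_+}$) is accurate.
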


\begin{proof}[Proof of Lemma~\ref{lem:micro}]
Recalling \eqref{def:LLL} and \eqref{def:CCCf}, we write 
$$
( - \LLL f , f )_{\HH} = ( - \CCC f , f )_{\HH} 
+( v \cdot \nabla_x f , f )_{\HH}.
$$
On the one hand, we recall the classical Poincaré inequality 
$$
\| h - \langle h  \mu\rangle \|_{L^2(\mu)} \le \| \nabla_v h \|_{L^2(\mu)}, \quad \forall \, h \in L^2(\mu dvdx), 
$$
from what we classically deduce 
\bean
 ( - \CCC f , f )_{\HH} 
 &=&- \int_\OO \Div_v(\mu \nabla_v (f/\mu)) f /\mu \, dvdx
 \\
  &=& \int_\OO  |\nabla_v (f/\mu)|^2 \mu \, dv \, dx
 \\
 &\ge&  \int_\OO  |f/\mu - \langle f \rangle |^2 \mu \, dv \, dx =  \| f^\perp \|^2_\HH.
 \eean
The second part of the estimate has been proved during the proof of \cite[Lemma 3.1]{MR4581432}. 
\end{proof}

We recall the identity established in \cite[Lemma 3.2]{MR4581432}. 

\begin{lem}\label{lem:boundary}
Let  $\phi: \R^d \to \R$. For any $x \in \partial\Omega$, there holds
$$
\begin{aligned}
\int_{\R^d}  \phi(v) \gamma f(x,v) \, n_x \cdot v \, dv  
& = \int_{\Sigma^x_{+}}  \phi(v) \iota(x) \DDD^\perp \gamma_{+} f  \, n_x \cdot v \, dv  \\
&\quad 
+ \int_{\Sigma^x_{+}}  \left\{ \phi(v) - \phi(\VV_x v) \right\}  (1-\iota(x)) \DDD^\perp \gamma_{+}  f  \, n_x \cdot v \, dv  \\
&\quad 
+ \int_{\Sigma^x_{+}}   \left\{ \phi(v) - \phi(\VV_x v) \right\}  \DDD \gamma_{+} f  \, n_x \cdot v \, dv .
\end{aligned}
$$    
\end{lem}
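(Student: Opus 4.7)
The plan is to decompose $\gamma f$ on the boundary into a diffusive part and a microscopic part, and then exploit two symmetries: the fact that $\MMM$ is radial (so $\MMM(\VV_x v) = \MMM(v)$) and the measure-preserving, sign-reversing change of variables $v \mapsto \VV_x v$ between $\Sigma_+^x$ and $\Sigma_-^x$. The crucial algebraic observation is that $\SSS \DDD = \DDD$, since $(\DDD g)(x,v) = \MMM(v) \widetilde g(x)$ depends on $v$ only through $\MMM(v)$. Combined with the Maxwell boundary condition \eqref{eq:KolmoBdyCond} and the tautology $\gamma_+ f = \DDD \gamma_+ f + \DDD^\perp \gamma_+ f$, this gives the convenient rewriting
\[
\gamma_- f = (1-\iota) \SSS \gamma_+ f + \iota \DDD \gamma_+ f = \DDD \gamma_+ f + (1-\iota) \SSS \DDD^\perp \gamma_+ f \quad \hbox{on}\ \Sigma^x_-.
\]

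First I would split the integral on the left-hand side into its contributions on $\Sigma^x_+$ and $\Sigma^x_-$, plug in the two identities above, and collect three pieces: (i) a diffusive piece $\int_{\R^d} \phi(v) \DDD \gamma_+ f (n_x \cdot v) \, dv$ over the whole velocity space (since $\DDD \gamma_+ f = \MMM\widetilde{\gamma_+ f}$ is defined for all $v$), (ii) a microscopic piece $\int_{\Sigma^x_+} \phi(v) \DDD^\perp \gamma_+ f (n_x \cdot v) \, dv$, and (iii) a specular-microscopic piece $(1-\iota) \int_{\Sigma^x_-} \phi(v) (\SSS \DDD^\perp \gamma_+ f)(v) (n_x \cdot v) \, dv$. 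In piece (iii), the change of variables $v \mapsto \VV_x v$ sends $\Sigma^x_-$ onto $\Sigma^x_+$ with unit Jacobian and turns $n_x \cdot v$ into $-n_x \cdot v$, giving $-(1-\iota) \int_{\Sigma^x_+} \phi(\VV_x v) \DDD^\perp \gamma_+ f(v) (n_x \cdot v) \, dv$.

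For piece (i), I would use the same change of variable together with $\MMM(\VV_x v) = \MMM(v)$ to obtain the key identity
\[
\int_{\R^d} \phi(v) \MMM(v) (n_x \cdot v) \, dv = \int_{\Sigma^x_+} [\phi(v) - \phi(\VV_x v)] \MMM(v) (n_x \cdot v) \, dv,
\]
which (after multiplying by $\widetilde{\gamma_+ f}(x)$) rewrites piece (i) as $\int_{\Sigma^x_+} [\phi(v) - \phi(\VV_x v)] \DDD \gamma_+ f (n_x \cdot v) \, dv$; this is precisely the third term of the right-hand side. Finally, adding pieces (ii) and (iii) yields
\[
\int_{\Sigma^x_+} \bigl\{ \phi(v) - (1-\iota) \phi(\VV_x v) \bigr\} \DDD^\perp \gamma_+ f \, (n_x \cdot v) \, dv,
\]
and splitting $\phi(v) - (1-\iota)\phi(\VV_x v) = \iota \, \phi(v) + (1-\iota) [\phi(v) - \phi(\VV_x v)]$ produces the first two terms of the right-hand side.

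The proof is essentially bookkeeping; there is no analytic obstacle beyond the careful handling of signs and of the $v \mapsto \VV_x v$ substitution. The only step that requires a genuine idea, rather than mechanical manipulation, is the use of $\SSS \DDD = \DDD$ to absorb the diffusive contribution of $\SSS \gamma_+ f$ into $\DDD \gamma_+ f$, which is what allows a clean separation into the three terms displayed in the statement.
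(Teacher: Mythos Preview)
Your argument is correct. The paper does not actually prove this lemma; it merely recalls the identity from \cite[Lemma~3.2]{MR4581432}, so there is no in-paper proof to compare against. Your approach---using $\SSS\DDD=\DDD$ to rewrite the boundary condition as $\gamma_-f=\DDD\gamma_+f+(1-\iota)\SSS\DDD^\perp\gamma_+f$, then applying the change of variables $v\mapsto\VV_xv$ on the $\Sigma_-^x$ contributions---is the natural one and matches the spirit of the computation in the cited reference.
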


\medskip
We estimate the second term involved in the Dirichlet form $D$.

\begin{lem}\label{lem:mass1}
There is a constant $C_2 > 0$,  such that
$$
 (\nabla_x \Delta_x^{-1} \varrho_f, j[\LLL f])_{L^2}   \ge 
 \frac12 \| \varrho_f \|_{L^2}^2 
- C_2 \| f^\perp \|_{\HH}^2
 - C_2    \| \iota \DDD^\perp \gamma_{+} f \|_{\partial \HH_+}^2, 
 \quad \forall \, f \in \HH.
  $$
\end{lem}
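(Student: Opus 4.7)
The plan is to first compute $j[\LLL f]$ explicitly in terms of $\varrho_f$ and $f^\perp$, then integrate by parts in $x$ to extract the coercive term $\|\varrho_f\|_{L^2}^2$ from the Poisson equation, and finally control the remaining terms by combining Lemma~\ref{lem:boundary} with the regularity estimates \eqref{eq:PoissonNH1}--\eqref{eq:PoissonNH2}. Setting $K[h](x) := \int_{\R^d}(v\otimes v)\,h(x,v)\,dv$ and $u_f := (-\Delta_x)^{-1}\varrho_f$, a direct integration by parts in $v$ yields $j[\LLL f] = -\Div_x K[f] - j[f]$, and the decomposition $f = \varrho_f\mu + f^\perp$ together with $\int v\otimes v\,\mu\,dv = I_d$ and $j[\pi f] = 0$ produces
\[
j[\LLL f] = -\nabla_x \varrho_f - \Div_x K[f^\perp] - j[f^\perp].
\]
Since $\nabla_x\Delta_x^{-1}\varrho_f = -\nabla_x u_f$, the quantity to estimate equals $T_1 + T_2 + T_3$ with $T_1 := (\nabla_x u_f, \nabla_x \varrho_f)_{L^2}$, $T_2 := (\nabla_x u_f, \Div_x K[f^\perp])_{L^2}$ and $T_3 := (\nabla_x u_f, j[f^\perp])_{L^2}$.

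For $T_1$, one integration by parts in $x$ together with the Neumann condition $n_x\cdot\nabla_x u_f = 0$ and $-\Delta_x u_f = \varrho_f$ immediately yields $T_1 = \|\varrho_f\|_{L^2}^2$, the coercive term. For $T_3$, Cauchy--Schwarz combined with $\|\nabla_x u_f\|_{L^2} \lesssim \|\varrho_f\|_{L^2}$ (from \eqref{eq:PoissonNH1}) and $\|j[f^\perp]\|_{L^2} \lesssim \|f^\perp\|_{\HH}$ (from \eqref{eq:fdecompositionNormEstim}) produces $|T_3| \lesssim \|\varrho_f\|_{L^2}\|f^\perp\|_{\HH}$.

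The main work is $T_2$. Integration by parts in $x$ splits it into a volume piece $-(D^2_x u_f, K[f^\perp])_{L^2_x}$ and a boundary piece $\int_{\partial\Omega} \nabla_x u_f\cdot K[f^\perp]\,n_x\, d\sigma_x$. The volume piece is bounded by Cauchy--Schwarz using $\|u_f\|_{H^2(\Omega)}\lesssim \|\varrho_f\|_{L^2}$ from \eqref{eq:PoissonNH2} and the Gaussian moment bound $\|K[f^\perp]\|_{L^2_x} \lesssim \|f^\perp\|_{\HH}$, contributing $\lesssim \|\varrho_f\|_{L^2}\|f^\perp\|_{\HH}$. For the boundary piece, $\gamma \pi f = (\gamma\varrho_f)\mu$ together with $\partial_n u_f = 0$ lets me discard the $\varrho_f I_d$ component of $K[\gamma f^\perp]$, reducing it to $\int_{\partial\Omega}\int_{\R^d} \psi(v)\gamma f\, n_x\cdot v\, dv\, d\sigma_x$ with $\psi(v) := \nabla_x u_f(x)\cdot v$. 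The crucial identity
\[
\psi(v) - \psi(\VV_x v) = 2(\nabla_x u_f\cdot n_x)(n_x\cdot v) = 2(\partial_n u_f)(n_x\cdot v) = 0 \quad\text{on } \partial\Omega
\]
collapses Lemma~\ref{lem:boundary} to only its first line, so the boundary piece reduces to $\int_{\partial\Omega}\int_{\Sigma^x_+} \psi(v)\iota\DDD^\perp\gamma_+ f\, n_x\cdot v\, dv\, d\sigma_x$. Cauchy--Schwarz in $\partial\HH_+$ against $\psi$ together with $\|\nabla_x u_f\|_{L^2(\partial\Omega)} \lesssim \|u_f\|_{H^1(\partial\Omega)} \lesssim \|\varrho_f\|_{L^2}$ from \eqref{eq:PoissonNH2} then bounds this term by $\lesssim \|\varrho_f\|_{L^2}\|\iota\DDD^\perp\gamma_+ f\|_{\partial\HH_+}$.

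Adding the three estimates gives $(\nabla_x\Delta_x^{-1}\varrho_f, j[\LLL f])_{L^2} \ge \|\varrho_f\|_{L^2}^2 - C\|\varrho_f\|_{L^2}(\|f^\perp\|_{\HH} + \|\iota\DDD^\perp\gamma_+ f\|_{\partial\HH_+})$, and Young's inequality applied to each cross term absorbs $\tfrac12\|\varrho_f\|_{L^2}^2$ to leave the claimed bound with a suitable $C_2 > 0$. The main obstacle is the analysis of the $T_2$ boundary contribution: one must recognize that the Neumann condition forces $\nabla_x u_f$ to be tangential on $\partial\Omega$, which is precisely what kills the specular-reflection terms in Lemma~\ref{lem:boundary} and leaves only the $\iota\DDD^\perp\gamma_+ f$ contribution, which is the only one compatible with the right-hand side of the lemma.
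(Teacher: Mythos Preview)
Your proof is correct and follows essentially the same approach as the paper: compute $j[\LLL f]$, integrate by parts in $x$ to extract $\|\varrho_f\|_{L^2}^2$ from the Poisson equation plus an $f^\perp$-remainder controlled via the $H^2$ estimate \eqref{eq:PoissonNH2}, and analyze the boundary term through Lemma~\ref{lem:boundary}, where the Neumann condition $\partial_n u_f = 0$ kills the specular contributions $B_2,B_3$ and leaves only the $\iota\,\DDD^\perp\gamma_+ f$ piece. The only cosmetic difference is the order of operations: the paper integrates by parts on the full second-moment tensor $\int v_iv_j f\,dv$ and splits off $\varrho_f\delta_{ij}$ afterward inside the volume term, whereas you split $K[f]=\varrho_f I_d + K[f^\perp]$ first and then integrate by parts on each piece; this avoids any question about the trace of $\varrho_f$ on $\partial\Omega$ in the paper's presentation, but your route reaches the identical boundary integral $B$ once you observe that the $\varrho_f I_d$ boundary contribution vanishes by $\partial_n u_f = 0$.
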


\begin{proof}[Proof of Lemma~\ref{lem:mass1}] We repeat the proof of \cite[Lemma 3.8]{MR4581432}. 
Writing 
$$
j[\LLL f] = j[-v\cdot \nabla_x f] -  j[f^\perp] %\Green =  j[-v\cdot \nabla_x f],
$$
where we have observed that  $\CCC \pi f = 0$ and $j[\CCC g] = j[g]$, 
%{\Green $j[\CCC f^\perp] = 0$, } 
and denoting $u :=  (-\Delta_x)^{-1} \varrho_f$, we have 
$$
(-\nabla_x u,  j [\LLL f]  )_{L^2}
= \bigl( \partial_{x_i} u, \partial_{x_j} \int_{\R^d} v_i v_j f \, dv  \bigr)_{L^2} + \bigl( \nabla_x u, j[f^\perp]  \bigr)_{L^2}.
$$
On the one hand,  using the Green formula, we may write 
$$
\bigl( \partial_{x_i}  u , \partial_{x_j} \int_{\R^d} v_i v_j f \, dv  \bigr)_{L^2}  = A + B,
$$
with 
$$
A := - \bigl( \partial_{x_j} \partial_{x_i} u ,  \int_{\R^d} v_i v_j f \, dv  \bigr)_{L^2}, 
\quad
B :=  \int_{\partial\Omega} \partial_{x_i} u\,  n_j(x)  \left( \int_{\R^d} v_i v_j \, \gamma f \, dv \right)  d\sigma_{\! x} . 
$$
Thanks to the decomposition \eqref{eq:fdecomposition}, %$f = \varrho \mu + m\cdot v \mu + \theta \frac{|v|^2-d}{\sqrt{2d}} \mu + f^\perp$, 
we get
$$
\int_{\R^d} v_i v_j f \, dv
= \delta_{ij} \varrho_f + \int_{\R^d} v_i v_j f^\perp \, dv,
$$
and hence 
$$
\begin{aligned}
A 
&= ( -\Delta_x u , \varrho_f )_{L^2} 
- ( \partial_{x_j} \partial_{x_i}  u , \int_{\R^d} v_i v_j f^\perp \, dv )_{L^2}\\
&= \|\varrho\|^2_{L^2}  
- ( \partial_{x_j} \partial_{x_i}  u , \int_{\R^d} v_i v_j f^\perp \, dv )_{L^2},
\end{aligned}
$$
since $-\Delta_x u = \varrho$ by definition of $u$. Using~\eqref{eq:PoissonNH2}, we have
$$
\begin{aligned}
\left| ( \partial_{x_j} \partial_{x_i}  u , \int_{\R^d} v_i v_j f^\perp \, dv )_{L^2}  \right|
&\lesssim \| D_x^2 u \|_{L^2} \| f^\perp \|_{\HH} \\
&\lesssim \| \varrho_f \|_{L^2} \| f^\perp \|_{\HH} ,
\end{aligned}
$$
from what it follows, thanks to Young's inequality,
$$
A \ge \frac34  \| \varrho_f \|_{L^2}^2 - C \| f^\perp \|_{\HH}^2.  
$$
We now investigate the boundary term $B$. Thanks to Lemma~\ref{lem:boundary}, we have
$$
\begin{aligned}
B &= \int_{\Sigma} \nabla_x u \cdot v \, \gamma f \, n_x \cdot v \, dv \, d\sigma_{\! x} \\
&= \int_{\Sigma_{+}} \nabla_x u\cdot v \iota(x) \DDD^\perp \gamma_{ +} f   \, n_x \cdot v \, dv \, d\sigma_{\! x} \\
&\quad    
+ \int_{\Sigma_{+}} \nabla_x u \cdot [v - \VV_x v] (1-\iota(x)) \DDD^\perp \gamma_{ +} f   \, n_x \cdot v \, dv \, d\sigma_{\! x} \\
&\quad    
+ \int_{\Sigma_{+}} \nabla_x u \cdot [v - \VV_x v] D \gamma_{ +} f   \, n_x \cdot v \, dv \, d\sigma_{\! x} \\
&=: B_1 + B_2 + B_3,
\end{aligned}
$$
and we remark that
$$
v - \VV_x v = 2 n_x (n_x \cdot v),
$$
so that 
$$
\nabla_x u \cdot [v - \VV_x v] = 2 \nabla_x u \cdot n_x \, (n_x \cdot v).
$$
We therefore obtain~$B_2 = B_3 = 0$ thanks to the boundary condition satisfied by $u$ in~\eqref{eq:Poisson-PDEFormulation}.
On the other hand,  the Cauchy-Schwarz inequality and~\eqref{eq:PoissonNH2} yield 
$$
\begin{aligned}
|B_1|
&\le  \| \nabla_x u \|_{L^2_x(\partial\Omega)} \| v \mu \|_{L^1}^{1/2} \| \iota \DDD^\perp \gamma_{+} f \|_{\partial \HH_+} \\
&\lesssim \| \varrho_f \|_{L^2}  \| \iota \DDD^\perp \gamma_{+} f \|_{\partial \HH_+} .
\end{aligned}
$$
Similarly as for the term $A$, we last have 
$$
\bigl| \bigl( \nabla_x u, j[f^\perp]  \bigr)_{L^2} \bigr| \le \| \nabla_x u \|_{L^2} \| j[f^\perp] \|_{L^2}
\lesssim \| \varrho_f \|_{L^2} \| f^\perp \|_{\HH} ,
$$
where we have used the estimate \eqref{eq:PoissonNH2} and twice the Cauchy-Schwarz inequality. 
The proof is then complete by gathering all the previous estimates and by using Young's inequality.  
\end{proof}

\medskip
We finally estimate the third term involved in the Dirichlet form $D$.

\begin{lem}\label{lem:mass2}
There is a constant  $C_3 >0$ such that 
$$
(\nabla_x  \Delta_x^{-1} \varrho[\LLL f] ,  j_f )_{L^2}\\
\ge -  C_3 \| f^\perp \|_{\HH}^2 
$$
\end{lem}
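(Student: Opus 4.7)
The plan is to reduce everything to a one-line energy estimate for the Neumann Poisson problem, after identifying $\varrho[\LLL f]$ as a spatial divergence involving $j_f$ and interpreting the corresponding Poisson solution variationally.

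I would first compute $\varrho[\LLL f]$ by writing $\LLL f = -v \cdot \nabla_x f + \CCC f$. Since $\CCC f$ is a pure velocity-divergence, its $v$-integral vanishes; and since $\int_{\R^d} v\, \mu \, dv = 0$ one has $j_f = j[f^\perp]$, so
\[
\varrho[\LLL f] = -\Div_x j_f = -\Div_x j[f^\perp].
\]
A preliminary sanity check is that this distribution has vanishing mean on $\Omega$: this follows from $\la\!\la \LLL f \ra\!\ra = 0$, a consequence of the mass-conservation property \eqref{eq:intro_mass_cons} (itself guaranteed by the Maxwell reflection conditions ruling out any net flux of mass through $\partial\Omega$). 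This guarantees that $u' := (-\Delta_x)^{-1} \varrho[\LLL f] \in H$ is well defined.

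The key observation is then that $u'$ coincides with the variational solution of \eqref{eq:Poisson-VarFormulation}--\eqref{eq:Poisson-PDEFormulation} associated with the data $\eta_1 = 0$ and $\eta_2 = -j_f \in L^2(\Omega)$: namely, $u' \in H$ satisfies
\[
\int_\Omega \nabla_x u' \cdot \nabla_x w \, dx = \int_\Omega \nabla_x w \cdot j_f \, dx, \qquad \forall\, w \in H.
\]
Testing this identity with the admissible choice $w = u' \in H$ yields the energy identity $\|\nabla_x u'\|_{L^2}^2 = (\nabla_x u', j_f)_{L^2}$. By the Cauchy-Schwarz inequality, $\|\nabla_x u'\|_{L^2} \le \|j_f\|_{L^2}$, and hence $\|\nabla_x u'\|_{L^2}^2 \lesssim \|f^\perp\|_\HH^2$ by the elementary bound \eqref{eq:fdecompositionNormEstim}.

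Putting the pieces together and recalling that $\nabla_x \Delta_x^{-1}\varrho[\LLL f] = -\nabla_x u'$, I would conclude
\[
(\nabla_x \Delta_x^{-1} \varrho[\LLL f], j_f)_{L^2} = -(\nabla_x u', j_f)_{L^2} = -\|\nabla_x u'\|_{L^2}^2 \ge -C_3 \|f^\perp\|_\HH^2,
\]
which is exactly the announced estimate. The only mild obstacle in this program is the verification that $\varrho[\LLL f]$ has zero mean, so that the Poisson inverse makes sense; everything else reduces to an energy argument for the Neumann Poisson problem and the basic bound \eqref{eq:fdecompositionNormEstim}.
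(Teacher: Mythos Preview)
Your argument follows essentially the same route as the paper's, but there is a genuine gap at the ``key observation''. The symbol $(-\Delta_x)^{-1}\varrho[\LLL f]$, as it appears in the twisted scalar product, is the variational Neumann solution with data $\eta_1=\varrho[\LLL f]\in L^2(\Omega)$ and $\eta_2=0$ (homogeneous Neumann). Your claim that this $u'$ coincides with the variational solution for $(\eta_1,\eta_2)=(0,-j_f)$ is \emph{not automatic}: the two problems differ by the boundary condition $n_x\cdot\nabla_x u'=0$ versus $n_x\cdot\nabla_x u'=-\,n_x\cdot j_f$, and they agree precisely when $j_f\cdot n_x=0$ pointwise on $\partial\Omega$. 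Equivalently, testing the $\eta_1$-formulation with $w=u'$ and integrating by parts gives
\[
\|\nabla_x u'\|_{L^2}^2=(\nabla_x u',j_f)_{L^2}-\int_{\partial\Omega} u'\,(j_f\cdot n_x)\,d\sigma_x,
\]
so the energy identity you use is off by a boundary term that must be shown to vanish.

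You invoke ``no net flux through $\partial\Omega$'' only to check that $\varrho[\LLL f]$ has zero mean, which is an integrated statement; what is actually needed is the pointwise zero-flux condition $j_f\cdot n_x=0$ on $\partial\Omega$. The paper establishes this explicitly (its equation~\eqref{eq:jn=0}) by splitting the velocity integral into the diffusive and specular contributions and checking, via the normalization of $\MMM$ and the change of variables $v\mapsto\VV_x v$, that each contribution vanishes. Once that is inserted, your proof is complete and matches the paper's.
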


\begin{proof}[Proof of Lemma~\ref{lem:mass2}] From \eqref{def:LLL}, \eqref{def:CCCf} and $\varrho[\CCC f] = 0$, 
%Since $\LLL f = - v\cdot \nabla_x f + \CCC f^\perp$, 
one has 
$$
\begin{aligned}
\varrho[\LLL f] 
= \varrho[- v \cdot \nabla_x f] 
= - \Div_x   \int_{\R^d} v f \, dv =  - \Div_x j_f.  
\end{aligned}
$$
On the other hand, we also classically observe 
\bean
j_f  \cdot n_x 
&=& \int_{\R^d} \gamma f \, v \cdot n_x dv 
\\
&=& \iota \Bigl\{ \int_{\Sigma^x_+} \gamma_{+} \!  f \, v \cdot n_x dv -  \int_{\Sigma^x_-} \MMM(v) \widetilde{\gamma_{+} \!  f}  \, |v \cdot n_x| dv  \Bigr\}
\\
&&+ (1-\iota) \Bigl\{ \int_{\Sigma^x_+}  \gamma_{+} \!  f \, v \cdot n_x dv -  \int_{\Sigma^x_-}  \gamma_{+} \!  f \circ \VV_x \, |v \cdot n_x| dv  \Bigr\},
\eean
and using the very definition of $ \widetilde{\gamma_{+} \!  f} $ and $\MMM$ in \eqref{eq:FPK-def_D} and \eqref{eq:FPK-def_M} in the second integral and  the change of variables $v \mapsto \VV_x v$ in the last integral, 
we see that both contributions vanish and we thus  obtain the zero flux condition 
\beqn\label{eq:jn=0}
j_f  \cdot n_x  = 0.
\eeqn
Now let us define 
$$
u := (- \Delta_x)^{-1} \varrho[\LLL f] = (- \Delta_x)^{-1} ( - \Div_x  j_f)
$$
 the unique variational solution to \eqref{eq:Poisson-PDEFormulation} with Neumann boundary condition associated to the source term $\xi = \varrho[\LLL f] = \Div \eta_2$, $\eta_2 := - j_f$. % given by Theorem~\ref{theo:Poisson}. 
From the variational formulation \eqref{eq:Poisson-VarFormulation}, we have 
% and next thanks to an integration by parts, we have
$$
\begin{aligned}
\| \nabla_x u \|_{L^2}^2 
& = - \int_{\Omega} (\nabla_x \cdot j_f) \, u \, dx \\
&= \int_{\Omega}  j_f \cdot \nabla_x u \, dx 
- \int_{\partial\Omega}  j_f \cdot n_x \, u \, d\sigma_{\! x}
= \int_{\Omega}  j_f \cdot \nabla_x u \, dx  , 
\end{aligned}
$$
where we have used the Green formula and finally \eqref{eq:jn=0} in order to obtain the last equality.  
We deduce 
$$
% \begin{equation}\label{eq:uNrhoLfH1}
\| \nabla_x u \|_{L^2}
 \lesssim \| j_f \|_{L^2}.
$$
%\end{equation}
  thanks to the Cauchy-Schwarz inequality, and thus 
%From~\eqref{eq:uNrhoLfH1}, we have 
$$
\begin{aligned}
\left| ( -\nabla_x u,  j_f  )_{L^2}   \right| 
&\lesssim \|  \nabla_x u \|_{L^2}   \| j_f \|_{L^2}
\lesssim  \| j_f\|_{L^2}^2.
\end{aligned}
$$
%That   allows us to conclude %bound the second term in the LHS of the estimate of the statement 
We conclude thanks to \eqref{eq:fdecompositionNormEstim}. 
\end{proof}

\medskip
We are now able to conclude the proof of Theorem \ref{theo:hypo}.

\begin{proof}[Proof of Theorem \ref{theo:hypo}]
Let $f$ satisfy the assumptions of Theorem~\ref{theo:hypo}. Observing that $\sqrt{\iota(2-\iota)} \ge \iota$ since $\iota$ takes values in $[0,1]$, and gathering Lemmas~\ref{lem:micro},~\ref{lem:mass1} and ~\ref{lem:mass2}, one has 
$$
\begin{aligned}
(\!( - \LLL f , f )\!) 
&\ge   \| f^\perp \|_{\HH}^2 + \frac12 \| \sqrt{\iota(2-\iota)} \DDD^\perp \gamma_{ +} f  \|_{\partial \HH_+}^2 \\
&\quad 
+\eps \Big( \frac12 \| \varrho_f \|_{L^2}^2 
- (C_2+C_3) \| f^\perp \|_{\HH}^2 
- C_2 \| \sqrt{\iota(2-\iota)} \DDD^\perp \gamma_{ +} f  \|_{\partial \HH_+}^2  \Big).
\end{aligned}
$$
 Choosing $0 < \eps < 1$ small enough, we get
$$
\begin{aligned}
(\!( - \LLL f , f )\!) 
&\ge \kappa \left( \| f^\perp \|_{\HH}^2 
+\| \varrho_f \|_{L^2}^2 
   \right)  
+ \kappa' \| \sqrt{\iota(2-\iota)} \DDD^\perp \gamma_{ +} f  \|_{\partial \HH_+}^2
\end{aligned}
$$
for some constants $\kappa,\kappa' >0$.
We thus obtain \eqref{eq:hypocoercivityL2} by using the identity \eqref{eq:fdecompositionNorm} and the equivalence \eqref{eq:equivNorm} of the norms $\| \cdot \|_{\HH}$ and $\Nt \cdot \Nt$. 
\end{proof}

%%%%%%%%%%%%%%%%%%%%%%%%%%%%%%%%%%%%%%%%%%%%
\section{Asymptotic behavior: Proof of Theorem~\ref{th:LimitInfty}}
 \label{sec:proofTh3}
 
We repeat the proof of \cite[Theorem~3.1]{MR3779780} and \cite[Theorem~1.4]{MR3488535}, so that we just sketch the arguments.

\begin{proof}[Proof of Theorem~\ref{th:LimitInfty}]
We introduce the splitting 
$$
\AA f := M \chi_R(v) f, \quad \BB := \LLL - \AA,
$$
with $\chi_R(v) := \chi(v/R)$, $\chi \in \DD(\R^d)$, ${\mathbf 1}_{B_1} \le \chi \le {\mathbf 1}_{B_2}$, and 
some constants $M,R>0$ to be fixed below. We denote by $S_\BB$ the semigroup associated to the modified KFP equation associated to the partial differential operator $\BB$ and the same 
reflection condition \eqref{eq:KolmoBdyCond}. 
We define 
\beqn\label{def:frakW2}
\mathfrak W_2 := \left\{ \omega \in \mathfrak W_0 \,; \, \sup_{p \in [1,\infty]} \limsup_{|v| \to \infty} \varpi_{\omega,p} =: \kappa^* < - 1 \right\}, 
\eeqn
where we recall that $ \varpi_{\omega,p}$ is defined in \eqref{def:varpi}. In particular, $\omega := \langle v \rangle^k e^{\zeta |v|^s} \in \mathfrak W_2$ if $s=2$ and $\zeta \in (0,1/2)$, or if $s \in [0,2) $, or if $s=0$ and $k > d+1$. By repeating the proof of Proposition~\ref{prop:EstimLp}, for any $\kappa > \kappa^*$, we may find $M,R > 0$ large enough such that for any $\omega \in \mathfrak W_2$, we have 
$$
\sup_{p \in [1,\infty]} \sup_{v \in \R^d} (\varpi_{\omega,p}(v)  - M \chi_R(v)) \le (\kappa^*+\kappa)/2, 
$$
and thus there exists a constant $C = C(\omega)>0$ such that 
\beqn\label{eq:prop-estimSB}
\| S_\BB(t) f_0 \|_{L^p_\omega} \le C e^{\kappa t} \|   f_0 \|_{L^p_\omega}, \quad \forall \, t \ge 0, 
\eeqn
for any $f_0 \in L^p_\omega$, $1 \le p \le \infty$. 

We now fix two weight functions $\omega_0 = e^{\zeta |v|^2}$ and $\omega_0' = e^{\zeta' |v|^2}$ with $0<\zeta'<\zeta<1/2$ satisfying the conditions of  Theorem~\ref{th:DGNML1Linfty}. By repeating the proof of Theorem~\ref{th:DGNML1Linfty}, we also have 
\beqn\label{eq:th1-estimSB}
\| S_\BB (t) f_0 \|_{L^\infty_{\omega_0'}} \le C  \frac{e^{\kappa t} }{ t^{\nu}}   \| f_0  \|_{L^p_{\omega_0}}, \quad \forall \, t > 0.
\eeqn
Recalling the definition of total mass $ \la\!\la \cdot \ra\!\ra$ in \eqref{eq:intro_mass_cons}, we define 
$$
\Pi g := g - \langle \! \langle g \rangle \! \rangle \mu
$$
and 
$$
\bar S_\LLL := \Pi S_\LLL = S_\LLL  \Pi  = \Pi S_\LLL \Pi.
$$
Iterating the Duhamel formulas
\bean
 S_\LLL &=& S_\BB + S_\BB \AA * \ S_\LLL 
\\
 S_\LLL &=& S_\BB +  S_\LLL  * \AA S_\BB, 
\eean
where $*$ stands the time convolution between operator defined on $\R$ with support on $\R_+$, we deduce that 
\beqn\label{eq:barSL1}
\bar S_\LLL = V_1 \Pi + W_1 * \bar S_\LLL, 
\eeqn
and 
\beqn\label{eq:barSL2}
\bar S_\LLL =  \Pi V_1 + \bar S_\LLL * W_2, 
\eeqn
with 
$$
V_1  :=  \sum_{j=0}^{n-1} (S_\BB \AA)^{*j} * S_\BB, \quad W_1 :=  (S_\BB \AA)^{*n}, \quad W_2 :=  ( \AA S_\BB)^{*n},
$$
where we use the shorthand $U^{*0} := Id$, $U^{*(j+1)} := U * U^{*j} $. Both estimates \eqref{eq:barSL1} and \eqref{eq:barSL2} together, we obtain
\beqn\label{eq:barSLrepresentation}
\bar S_\LLL = V_2 + W_1 * \bar S_\LLL * W_2, 
\eeqn
with 
$$
V_2 :=  V_1 \Pi + W_1 * \Pi V_1. 
$$
For any $\kappa > \kappa^*$ and $n \in \N$, we deduce from \eqref{eq:prop-estimSB} that 
\beqn\label{eq:estimV_2}
\|V_2(t) f_0 \|_{L^p_\omega} \le C e^{\kappa t} \|   f_0 \|_{L^p_\omega}, \quad \forall \, t \ge 0, 
\eeqn
For any $\kappa > \kappa^*$, we deduce from \eqref{eq:prop-estimSB}  and \eqref{eq:th1-estimSB} (see \cite{MR3779780,MR3488535} as well as \cite[Proposition 2.5]{MR3465438}) that  we may find $n \in \N^*$ such that 
\bear\label{eq:estimW1}
\| W_1(t) f_0 \|_{L^p_\omega} &\le& C e^{\kappa t} \|   f_0 \|_{L^2(\mu)}, \quad \forall \, t \ge 0, 
\\ \label{eq:estimW2}
\| W_2(t) f_0 \|_{L^2_{\mu^{-1/2}}} &\le& C e^{\kappa t} \|   f_0 \|_{L^p_\omega}, \quad \forall \, t \ge 0. 
\eear
We also recall that from Theorem~\ref{theo:hypo}, we have  
\beqn\label{eq:barSLestim}
\| \bar S_\LLL f_0 \|_{L^2_{\mu^{-1/2}}} \le C e^{-\lambda t} \| f_0 \|_{L^2_{\mu^{-1/2}}}, \quad \forall \, t \ge 0. 
\eeqn
We conclude to \eqref{eq:th:LimitInfty} by just writing the representation formula \eqref{eq:barSLrepresentation} and using the estimates \eqref{eq:estimV_2}, \eqref{eq:estimW1}, \eqref{eq:estimW2} and \eqref{eq:barSLestim}.
\end{proof}

\medskip\medskip
\paragraph{\textbf{Acknowledgments.}}
The authors warmly thank Clément Mouhot for his PhD course {\it De Giorgi-Nash-Moser theory for kinetic equations} taught at Université Paris Dauphine-PSL during Spring 2023, which has been a source of inspiration for the present work and also the  enlightening discussions  in many occasions about the same subject.  K.C.\ was partially supported by the Project CONVIVIALITY ANR-23-CE40-0003 of the French National Research Agency.

%%%%%%%%%%%%
\bigskip
%\bibliographystyle{acm}
%\bibliography{bib-KFP}

\end{document}